\definecolor{mygreen}{rgb}{0,0.4,0}
\lstdefinestyle{custom}{
  xleftmargin=\parindent,
  language=mathematica,
  basicstyle=\footnotesize\ttfamily,
  keywordstyle=\bfseries,
  commentstyle=\color{mygreen},
  breaklines=false,
  deletekeywords={K},
  otherkeywords={FirstPosition}
}
\numberwithin{equation}{section}
\theoremstyle{plain}
\newtheorem{theorem}{Theorem}[section]
\newtheorem{proposition}[theorem]{Proposition}
\newtheorem{lemma}[theorem]{Lemma}
\newtheorem{corollary}[theorem]{Corollary}
\theoremstyle{definition}
\newtheorem{definition}[theorem]{Definition}
\newtheorem{example}[theorem]{Example}
\theoremstyle{remark}
\newtheorem{remark}[theorem]{Remark}
\newcommand{\incl}{\hookrightarrow}
\newcommand{\surj}{\twoheadrightarrow}
\newcommand{\defeq}{:=}
\DeclareMathOperator{\Hom}{Hom}
\DeclareMathOperator{\id}{id}
\renewcommand{\phi}{\varphi}
\DeclarePairedDelimiter\abs{\lvert}{\rvert}
\newcommand{\N}{\mathbb{N}}
\newcommand{\Z}{\mathbb{Z}}
\newcommand{\Q}{\mathbb{Q}}
\newcommand{\R}{\mathbb{R}}
\DeclareMathOperator{\breadth}{breadth}
\DeclareMathOperator{\maxdeg}{maxdeg}
\DeclareMathOperator{\mindeg}{mindeg}
\newcommand{\csum}{\mathbin{\#}}
\DeclareMathOperator{\Sk}{Sk}
\DeclareMathOperator{\writhe}{wr}
\def\nullhomologous{null\-ho\-mol\-o\-gous}
\def\homologous{ho\-mol\-o\-gous}
\begin{document}

\title{The homological arrow polynomial for virtual links}

\author{Kyle A. Miller}
\address{Department of Mathematics\\ University of California, Santa Cruz\\ California 95064, USA}
\email{kymiller@ucsc.edu}

\subjclass[2020]{Primary 57K12; Secondary 57K14}
\keywords{Surface link, virtual link, arrow polynomial, nullhomologous surface link}

\date{July 2022, revised February 2023}


\maketitle

\begin{abstract}
  The arrow polynomial is an invariant of framed oriented virtual links that generalizes the virtual Kauffman bracket.
  In this paper we define the \emph{homological arrow polynomial}, which generalizes the arrow polynomial to framed oriented virtual links with labeled components.
  The key observation is that, given a link in a thickened surface, the homology class of the link defines a functional on the surface's skein module, and by applying it to the image of the link in the skein module this gives a virtual link invariant.

  We give a graphical calculus for the homological arrow polynomial by taking the usual diagrams for the Kauffman bracket and including labeled ``whiskers'' that record intersection numbers with each labeled component of the link.

  We use the homological arrow polynomial to study $(\Z/n\Z)$-\nullhomologous{} virtual links and checkerboard colorability, giving a new way to complete Imabeppu's characterization of checkerboard colorability of virtual links with up to four crossings.
  We also prove a version of the Kauffman--Murasugi--Thistlethwaite theorem that the breadth of an evaluation of the homological arrow polynomial for an ``h-reduced'' diagram $D$ is $4(c(D)-g(D)+1)$.
\end{abstract}

\tableofcontents

\newpage

\section{Introduction}
\label{sec:introduction}

\subsection{Virtual links}

A \emph{surface link} is a smooth oriented link $L$ in the interior of a thickened surface $\Sigma\times I$, where $\Sigma$ is a compact oriented surface possibly with nonempty boundary.
The usual notion of equivalence for a surface link is isotopy, but we say surface links $L\subset\Sigma\times I$ and $L'\subset\Sigma'\times I$ are \emph{weakly equivalent} if there exists an orientation-preserving diffeomorphism $\Sigma\times I\to \Sigma'\times I$ carrying $L$ to $L'$, $\Sigma\times\{0\}$ to $\Sigma'\times\{0\}$, and $\Sigma\times\{1\}$ to $\Sigma'\times\{1\}$, each respecting orientations.
Isotopic surface links are weakly equivalent, and furthermore weak equivalence of links in a fixed thickened surface $\Sigma\times I$ is the equivalence relation generated by both link isotopy and the induced action of the mapping class group of $\Sigma$.

Given a compact subsurface $\Sigma'\subseteq\Sigma$ with $L\subset\Sigma'\times I$, then the surface link $L\subset \Sigma'\times I$ is called a \emph{destabilization} of $L\subset \Sigma\times I$.
\emph{Virtual equivalence} is the equivalence relation\footnote{A set theory consideration: there is no set of all compact oriented surfaces, but it is sufficient to consider a minimal set that contains one closed connected surface of every genus and that is closed under taking compact subsurfaces and finite disjoint unions. Then the equivalence relation is on the set of surface links for these surfaces.} generated by weak equivalence and destabilization, and a \emph{virtual link} is an equivalence class with respect to virtual equivalence\cite{Carter2002a,Kuperberg2003}.
The \emph{virtual genus} of a virtual link is the minimal genus of $\Sigma$ over all representative surface links $L\subset \Sigma\times I$ with $\Sigma$ closed.
A virtual link is \emph{classical} if its virtual genus is $0$, and by \cite{Kuperberg2003} classical virtual links are in one-to-one correspondence with links in $S^3$ up to isotopy.

The combinatorial data of a virtual link can be given as a \emph{virtual link diagram}, which is a ribbon graph (i.e., a combinatorial map) with oriented edges whose vertices each have degree $2$ or $4$ and whose degree-$4$ vertices have a marked pair of opposite half-edges corresponding to the overstrand of a crossing, furthermore edges opposite each other at each vertex have compatible orientations.
The degree-$2$ vertices correspond to arbitrary arc subdivisions, which allow for unknots to be represented in this way.
(This is the data of a PD code with P, Xp, and Xm terms --- see \Cref{sec:computing-arrow}.)
Virtual links are equivalent if and only if their virtual link diagrams are related by the Reidemeister moves and arc subdivision.
Each virtual link diagram has a \emph{cellular embedding} in a closed oriented surface by gluing disks to the boundary components of the geometric realization of the underlying ribbon graph.
The \emph{genus} of a virtual link diagram is the genus of this surface, and the minimal genus over all virtual link diagrams corresponds to the virtual genus of the virtual link, and so a virtual link is classical if there exists a genus-$0$ (or \emph{planar}) virtual link diagram for it.

Virtual link diagrams are usually immersed in the plane with normal crossings.
The double points --- artefacts of non-planarity --- are called \emph{virtual crossings} and are usually marked with a small circle to make them obvious.
Such diagrams up to the classical Reidemeister moves and \emph{detour moves} (re-immersions of the ribbon graph) generate virtual equivalence\cite{Kauffman1999}.

\begin{figure}[tb]
  \centering
  \hspace*{-1cm}
\begingroup%
  \makeatletter%
  \providecommand\color[2][]{%
    \errmessage{(Inkscape) Color is used for the text in Inkscape, but the package 'color.sty' is not loaded}%
    \renewcommand\color[2][]{}%
  }%
  \providecommand\transparent[1]{%
    \errmessage{(Inkscape) Transparency is used (non-zero) for the text in Inkscape, but the package 'transparent.sty' is not loaded}%
    \renewcommand\transparent[1]{}%
  }%
  \providecommand\rotatebox[2]{#2}%
  \newcommand*\fsize{\dimexpr\f@size pt\relax}%
  \newcommand*\lineheight[1]{\fontsize{\fsize}{#1\fsize}\selectfont}%
  \ifx\svgwidth\undefined%
    \setlength{\unitlength}{159.12000275bp}%
    \ifx\svgscale\undefined%
      \relax%
    \else%
      \setlength{\unitlength}{\unitlength * \real{\svgscale}}%
    \fi%
  \else%
    \setlength{\unitlength}{\svgwidth}%
  \fi%
  \global\let\svgwidth\undefined%
  \global\let\svgscale\undefined%
  \makeatother%
  \begin{picture}(1,0.88195869)%
    \lineheight{1}%
    \setlength\tabcolsep{0pt}%
    \put(0,0){\includegraphics[width=\unitlength,page=1]{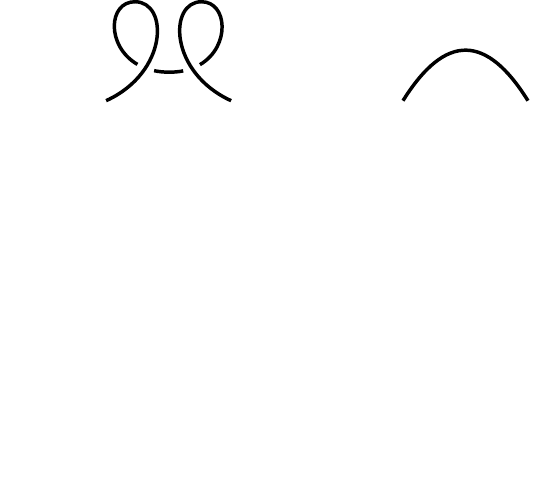}}%
    \put(0.57083078,0.76825535){\makebox(0,0)[t]{\lineheight{1.25}\smash{\begin{tabular}[t]{c}$\sim$\end{tabular}}}}%
    \put(0.07898348,0.76825535){\makebox(0,0)[rt]{\lineheight{1.25}\smash{\begin{tabular}[t]{r}I\textquotesingle)\end{tabular}}}}%
    \put(0,0){\includegraphics[width=\unitlength,page=2]{reg-reid.pdf}}%
    \put(0.57083078,0.44774223){\makebox(0,0)[t]{\lineheight{1.25}\smash{\begin{tabular}[t]{c}$\sim$\end{tabular}}}}%
    \put(0.07898349,0.44774223){\makebox(0,0)[rt]{\lineheight{1.25}\smash{\begin{tabular}[t]{r}II)\end{tabular}}}}%
    \put(0,0){\includegraphics[width=\unitlength,page=3]{reg-reid.pdf}}%
    \put(0.57083078,0.1272291){\makebox(0,0)[t]{\lineheight{1.25}\smash{\begin{tabular}[t]{c}$\sim$\end{tabular}}}}%
    \put(0.07898349,0.1272291){\makebox(0,0)[rt]{\lineheight{1.25}\smash{\begin{tabular}[t]{r}III)\end{tabular}}}}%
  \end{picture}%
\endgroup%

  \caption{Reidemeister moves for framed surface links.}
  \label{fig:framed-reidemeister}
\end{figure}

A \emph{framed} surface link is a surface link $L\subset\Sigma\times I$ along with an extension to an embedding of $L\times I$ in the interior of $\Sigma\times I$, up to isotopy.
Note that the embedding does not usually respect the product structure of $\Sigma\times I$.
By extension, a \emph{framed} (or \emph{flat}) virtual link is a framed surface link up to the equivalence relation generated by weak equivalence and destabilization.
Diagrams for framed virtual links are virtual link diagrams with the convention that the framing annuli lie parallel to the plane.
The only change to the Reidemeister theorem for framed virtual links is that the Reidemeister I move is replaced by the Reidemeister I\textquotesingle{} move for regular isotopy (see \Cref{fig:framed-reidemeister}).

\subsection{The arrow polynomial}

Discovered independently by Dye--Kauffman\cite{Dye2008} and Miyazawa \cite{Miyazawa2006,Miyazawa2008}, the arrow polynomial is an invariant for oriented framed virtual links of at least one component that takes values in the polynomial ring $\Z[A^{\pm 1},K_1,K_2,\dots]$.
In the Dye--Kauffman formulation, the Kauffman bracket is modified to make use of the orientation of the virtual link by introducing cusps in the $B$-smoothing (see \Cref{fig:arrow-bracket-1}).
The evaluation of a fully resolved state as an element of the polynomial ring is from counting cusps after certain cusp cancelation rules.
To explain this process we use the Miyazawa formulation.
First, replace the cusps with \emph{vertex orientations} according to \Cref{fig:arrow-bracket-cusps}; these are local orientations at an integer-labeled degree-$2$ vertex.
Second, reduce the vertex orientations according to \Cref{fig:arrow-bracket-cusps}, leaving a single vertex orientation along each component of the state.
Third, evaluate the state as $(-A^2-A^{-2})^{b_0(S)-1} \prod_{C\subseteq S}K_{\abs{n(C)}/2}$, where, setting $K_0=1$,
\begin{enumerate}
\item $b_0(S)$ is the number of components in the state $S$,
\item $C$ ranges over components of the state, and
\item $n(C)$ is the label of the sole vertex orientation along that component (which is always even, as we will come to see).
\end{enumerate}
Hence, the arrow polynomial of $L$ given a virtual link diagram $D$ has the formula
\begin{equation*}
  \langle D \rangle_{\mathrm{A}} = \sum_{S} A^{a(S)-b(S)} (-A^2-A^{-2})^{b_0(S)-1} \prod_{C\subseteq S} K_{\abs{n(C)}/2},
\end{equation*}
where $a(S)$ and $b(S)$ respectively denote the numbers of $A$-smoothings and $B$-smoothings in state $S$.
This is independent of the diagram, so we are justified in defining $\langle L \rangle_{\mathrm{A}} = \langle D \rangle_{\mathrm{A}}$.

\begin{figure}[tb]
  \centering
\begingroup%
  \makeatletter%
  \providecommand\color[2][]{%
    \errmessage{(Inkscape) Color is used for the text in Inkscape, but the package 'color.sty' is not loaded}%
    \renewcommand\color[2][]{}%
  }%
  \providecommand\transparent[1]{%
    \errmessage{(Inkscape) Transparency is used (non-zero) for the text in Inkscape, but the package 'transparent.sty' is not loaded}%
    \renewcommand\transparent[1]{}%
  }%
  \providecommand\rotatebox[2]{#2}%
  \newcommand*\fsize{\dimexpr\f@size pt\relax}%
  \newcommand*\lineheight[1]{\fontsize{\fsize}{#1\fsize}\selectfont}%
  \ifx\svgwidth\undefined%
    \setlength{\unitlength}{168.88078952bp}%
    \ifx\svgscale\undefined%
      \relax%
    \else%
      \setlength{\unitlength}{\unitlength * \real{\svgscale}}%
    \fi%
  \else%
    \setlength{\unitlength}{\svgwidth}%
  \fi%
  \global\let\svgwidth\undefined%
  \global\let\svgscale\undefined%
  \makeatother%
  \begin{picture}(1,0.53245942)%
    \lineheight{1}%
    \setlength\tabcolsep{0pt}%
    \put(0,0){\includegraphics[width=\unitlength,page=1]{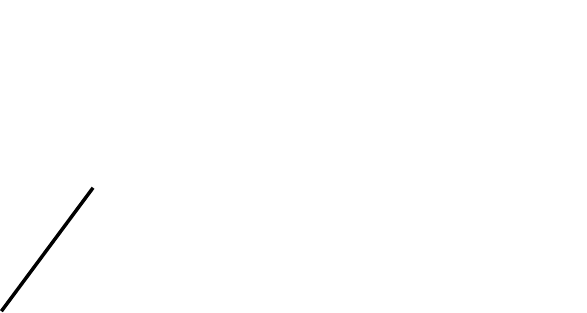}}%
    \put(0.17841355,0.41179943){\makebox(0,0)[lt]{\lineheight{1.25}\smash{\begin{tabular}[t]{l}${}=A$\end{tabular}}}}%
    \put(0,0){\includegraphics[width=\unitlength,page=2]{arrow-cusps-small.pdf}}%
    \put(0.49929265,0.41179943){\makebox(0,0)[lt]{\lineheight{1.25}\smash{\begin{tabular}[t]{l}${}+A^{-1}$\end{tabular}}}}%
    \put(0.17841355,0.09338052){\makebox(0,0)[lt]{\lineheight{1.25}\smash{\begin{tabular}[t]{l}${}=A$\end{tabular}}}}%
    \put(0,0){\includegraphics[width=\unitlength,page=3]{arrow-cusps-small.pdf}}%
    \put(0.49929265,0.09338052){\makebox(0,0)[lt]{\lineheight{1.25}\smash{\begin{tabular}[t]{l}${}+A^{-1}$\end{tabular}}}}%
    \put(0,0){\includegraphics[width=\unitlength,page=4]{arrow-cusps-small.pdf}}%
  \end{picture}%
\endgroup%

  \caption{Crossing resolutions for the Dye--Kauffman arrow polynomial, which introduce cusps.}
  \label{fig:arrow-bracket-1}
\end{figure}

\begin{figure}[tb]
  \centering
  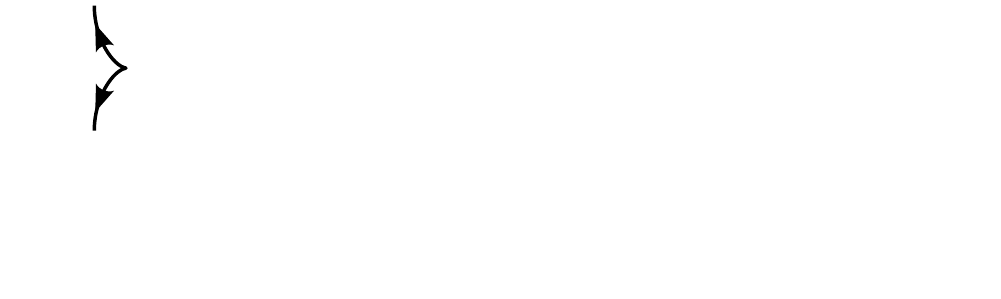
  \caption{(a) Graphical notation for arrow polynomial cusps as integer-labeled vertex orientations, similar to the decorated magnetic virtual graphs of \cite{Miyazawa2008}. (b)~Rules for integer-labeled vertex orientations after erasing arc orientations.}
  \label{fig:arrow-bracket-cusps}
\end{figure}

For virtual links in general, Dye and Kauffman define the writhe-normalized arrow polynomial $\langle D\rangle_{\mathrm{NA}} = (-A^3)^{-\writhe(D)}\langle D\rangle_{\mathrm{A}}$, where the \emph{writhe} is the sum of the signs of the crossings of the virtual link diagram, which makes the polynomial be invariant under the usual Reidemeister I move.
Said another way, the writhe-normalized arrow polynomial is the arrow polynomial of a writhe-$0$ representative in the equivalence class of framed virtual links modulo the usual Reidemeister I move.
One might define the arrow version of the virtual Jones polynomial by substituting $t=A^{-4}$ into $\langle L\rangle_{\mathrm{NA}}$.
The usual virtual Jones polynomial, then, is from additionally setting $K_n=1$ for all $n$.

\begin{figure}[tb]
  \centering
  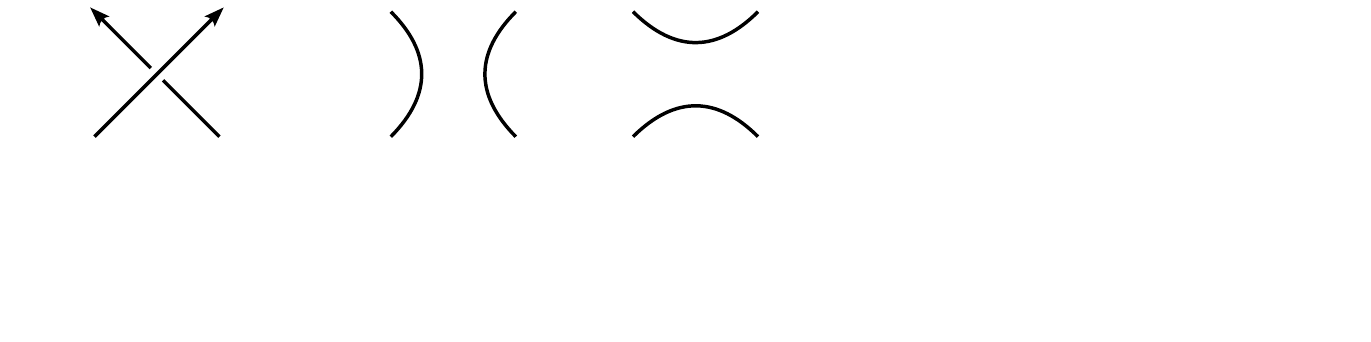
  \caption{(a) Expansion for the homological arrow polynomial with components labeled by $a,b\in G$, using the left-pushoff convention. Whiskers indicate local contributions to algebraic intersection numbers and are labeled by an element of the abelian group $G$. (b) Whisker rules. Loops with a single whisker labeled by $a\in G$ evaluate to $(-A^2-A^{-2})X_{\pm a}$ with $X_0=1$. }
  \label{fig:whisker-expansion}
\end{figure}

\subsection{Overview}

In this paper, we give a formulation of the arrow polynomial from a homological point of view, similar in flavor to Turaev's two-variable extension of the Jones polynomial to knotoids\cite{Turaev2010}.
This makes many facts about the arrow polynomial transparent while also giving a straightforward extension to virtual links with labeled components.

Taking a surface link representative $L\subset\Sigma$ of a virtual link, we may consider its image in the Kauffman bracket skein module $\Sk(\Sigma)$ for the thickened surface.
This is not in itself an invariant of the virtual link for the simple reason that it depends on the surface, but, even if the minimal-genus closed surface representative were used, the image is not invariant under the action of the mapping class group, which would be essential for a virtual link invariant.
The main idea is to choose a functional on the Kauffman bracket skein module that is derived from the link itself, and this is done in such a way that when it is evaluated on the image $[L]$ in $\Sk(\Sigma)$ the value is independent of the representative surface link.

We give an overview of how this works, with a detailed description in \Cref{sec:homological-arrow-poly}.
Let $L\subset \Sigma\times I$ be a surface link, let $G$ be an abelian group, and let $\lambda:H_0(L)\to G$ be a group homomorphism, which we regard as a labeling of the oriented components of $L$ by elements of $G$
(the choice of $G=H_0(L)$ and $\lambda=\id$ is most general).
The homological arrow polynomial for $L$ is an invariant in the polynomial ring $R_G=\Z[A^{\pm 1}][X_{\pm g}\mid g\in G]/(X_0-1)$.
We define a homomorphism $h_{L,\lambda}:H_1(\Sigma)\to G$ using algebraic intersection numbers in $\Sigma$ via the following.
Recall that the algebraic intersection number in $\Sigma$ defines a homomorphism $H_1(\Sigma)\otimes H_1(\Sigma)\to \Z$, which for $\alpha,\beta\in H_1(\Sigma)$ we denote the image of $\alpha\otimes\beta$ by $\alpha\cdot\beta$.
Letting $\pi:\Sigma\times I\to\Sigma$ be the canonical projection, we may define $H_1(L)\otimes H_1(\Sigma)\to\Z$ by $\alpha\otimes\beta \mapsto \pi_*(\alpha)\cdot \beta$, and then, using the hom-tensor adjunction and the fact that there are canonical isomorphisms $\Hom(H_1(L),\Z)\cong H^1(L)\cong H_0(L)$, we get a homomorphism $H_1(\Sigma)\to H_0(L)$, which we may then compose with $\lambda$ to obtain $h_{L,\lambda}$.
Next, $\overline{h}_{L,\lambda}:\Sk(\Sigma)\to R_G$ is the $\Z[A^{\pm 1}]$-linear homomorphism defined on simple skeins, where on simple skeins $\overline{h}_{L,\lambda}$ is multiplicative with respect to disjoint unions and, for simple closed curves $C\subseteq \Sigma\times\{1/2\}$, we define $\overline{h}_{L,\lambda}([C])=(-A^2-A^{-2})X_{\pm h_{L,\lambda}([C])}$ (this $\pm$ is due to the fact that skeins in $\Sk(\Sigma)$ are unoriented).
With $h_{L,\lambda}$ in hand, the \emph{homological arrow polynomial} is $\mathcal{A}(L,\lambda):= \overline{h}_{L,\lambda}([L])$.
The special case of $\lambda:H_0(L)\to \Z$ sending each component to $1$ yields a polynomial that is equal to $-A^{2}-A^{-2}$ times the usual arrow polynomial with $X_{\pm n}=K_{\abs{n}/2}$ (the arrow polynomial is normalized such that the value of the unknot is $1$).

The homological arrow polynomial has a graphical calculus (see \Cref{fig:whisker-expansion}) similar to the ones for the arrow polynomial, and it lets us calculate $h_{L,\lambda}$ in a local way.
Since $L$ is oriented, then in a diagram for $L$ on $\Sigma$ there is a well-defined normal bundle for $\pi(L)$ that we can use to push off $\pi(L)$, the effect of which is that when we do the usual Kauffman bracket expansion we may assume that each curve intersects the pushed-off $\pi(L)$ only near crossings.
We indicate such intersection points using a \emph{whisker} that records the orientation of $L$ at that point along with the value of $\lambda$ for the corresponding component of $L$.
A whisker is essentially the same data as the vertex orientations from \Cref{fig:arrow-bracket-1} but using general abelian groups beyond $\Z$.

After defining the polynomial in \Cref{sec:homological-arrow-poly}, the main part of this paper is \Cref{sec:properties}, where basic properties are proved.
The graphical calculus of \Cref{fig:whisker-expansion} is used to reprove and generalize known properties of the arrow polynomial, where the homological point of view helps simplify arguments.
We investigate properties of the homological arrow polynomial for links that are \nullhomologous{} in $H_1(\Sigma\times I;R)$ for varying rings $R$.
This includes $\Z$-\nullhomologous{} links (i.e., \emph{almost classical links}) and $\Z/2\Z$-\nullhomologous{} links (i.e., \emph{checkerboard-colorable links}).

\begin{figure}[tb]
  \centering
  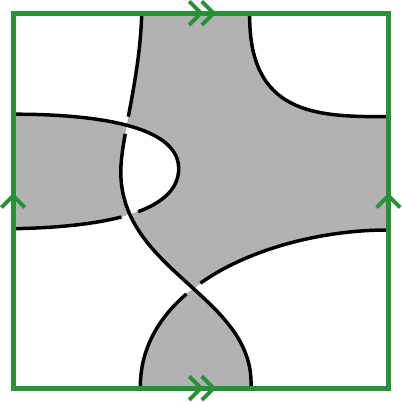
  \caption{The virtual knot $3.5$ is checkerboard colorable and has virtual genus $1$.
    Here it is drawn on an identification diagram for a torus with a shaded checkerboard surface.
  Its arrow polynomial is $\langle 3.5\rangle_{\mathrm{A}} = -A^{-5}+A^{-1}-A^3+(-A^{-1}+A^7)K_1^2$.}
  \label{fig:vknot_3_5}
\end{figure}

Incidentally, we clarify \cite[Theorem 10.8]{Dye2016}, which is incorrect as stated.
The \emph{arrow set} $\operatorname{AS}(L)$ of a framed virtual link $L$ is the set
\[
  \operatorname{AS}(L) = \{k_1+\dots+k_n \mid pK_{k_1}K_{k_2}\cdots K_{k_n}\text{ is a nonzero monomial in } \langle L \rangle_{\mathrm{A}} \},
\]
and $\operatorname{AS}(L)=\{0\}$ if and only if the arrow polynomial of $L$ is equal to its virtual Kauffman bracket.
What \cite[Theorem 10.8]{Dye2016} says is that $\Z/2\Z$-\nullhomologous{} links have $\operatorname{AS}(L)=\{0\}$, however the checkerboard colorable virtual knot $3.5$ has $-A^{-5}+A^{-1}-A^3+(-A^{-1}+A^7)K_1^2$ as its arrow polynomial (see \Cref{fig:vknot_3_5}), so $\operatorname{AS}(L)=\{0,2\}$.
One way to get the same conclusion is by using \Cref{thm:z-nullhomologous-poly}, which implies that if $L$ is $\Z$-\nullhomologous{} then $\operatorname{AS}(L)=\{0\}$.
Or, using the same hypothesis, \Cref{thm:deng2020} implies that if $L$ is $\Z/2\Z$-\nullhomologous{} then $\operatorname{AS}(L)\subset 2\N$.

In \Cref{sec:checkerboard}, we show how cabled homological arrow polynomials can be used to complete Imabeppu's conjectured characterization\cite{Imabeppu2016} of checkerboard colorability of all virtual knots with up to four crossings.
Note that this was already settled in \cite{Boden2017} for virtual knots with up to six crossings using mod-$2$ Alexander numberings, but their results require a minimal-crossing diagram whereas obstructions from cabled homological arrow polynomials do not.

We also consider in \Cref{sec:alternating-links} a Kauffman--Murasugi--Thistlethwaite type theorem, where if an alternating virtual link is ``$h$-reduced'' then the $A$-breadth of a small variation of the homological arrow polynomial is $4(c(D)-g(D))$, where $c(D)$ is the number of crossings in a diagram $D$ and $g(D)$ is its genus.
This strengthens a previous generalization in \cite{Kamada2004}.
While it's not as strong as the generalization in \cite{Boden2019}, their polynomial depends on finding the minimal-genus representative for the virtual link.
Some recent related work is \cite{Boden2020b}, which proves the first and second Tait conjectures for adequate links in thickened surfaces, and \cite{Boden2020}, which proves that minimal crossing number diagrams can be realized with a diagram realizing the virtual genus.

Finally, in \Cref{sec:computational-investigations} we compute cabled arrow polynomials to distinguish all virtual knots with up to four crossings.
There are nine pairs of five-crossing virtual knots that the cabled arrow polynomials up to the third do not distinguish (see \Cref{tab:arrow-non-unique-5-3}).
These computations were corroborated by writing the program from scratch in two different programming languages, JavaScript and Lean~4.
We plan to use the proof assistant capabilities of Lean~4 to have a machine-checked proof that it correctly computes a virtual link invariant.

{We leave it for future work to further elaborate the connections between the homological arrow polynomial and other virtual knot invariants.
  For example, index-style polynomials such as the writhe polynomial can be written in terms of intersection numbers of the surface link diagram with the \emph{distinguished half} associated to $A$-smoothings of single crossings\cite{Boden2017a,Cheng2020}.
}

When referring to virtual knots, we use Jeremy Green's census\cite{Green2004}.
For identification of virtual knots, we use the software KnotFolio\cite{KnotFolio} {at \url{https://kmill.github.io/knotfolio/}}.

\section{The homological arrow polynomial}
\label{sec:homological-arrow-poly}

We will first consider framed oriented surface links in a fixed thickened surface $\Sigma\times I$, define a functional on the skein module of $\Sigma\times I$, and then show the result is invariant under stabilization of the surface, and hence is an invariant of framed virtual links.

The \emph{Kauffman bracket skein module} $\Sk(M)$ for a $3$-manifold $M$ consists of $\Z[A^{\pm 1}]$-linear combinations of framed unoriented links in $M$, called \emph{skeins}, modulo the relations for the Kauffman bracket:
\begin{equation*}
\begingroup%
  \makeatletter%
  \providecommand\color[2][]{%
    \errmessage{(Inkscape) Color is used for the text in Inkscape, but the package 'color.sty' is not loaded}%
    \renewcommand\color[2][]{}%
  }%
  \providecommand\transparent[1]{%
    \errmessage{(Inkscape) Transparency is used (non-zero) for the text in Inkscape, but the package 'transparent.sty' is not loaded}%
    \renewcommand\transparent[1]{}%
  }%
  \providecommand\rotatebox[2]{#2}%
  \newcommand*\fsize{\dimexpr\f@size pt\relax}%
  \newcommand*\lineheight[1]{\fontsize{\fsize}{#1\fsize}\selectfont}%
  \ifx\svgwidth\undefined%
    \setlength{\unitlength}{179.81279613bp}%
    \ifx\svgscale\undefined%
      \relax%
    \else%
      \setlength{\unitlength}{\unitlength * \real{\svgscale}}%
    \fi%
  \else%
    \setlength{\unitlength}{\svgwidth}%
  \fi%
  \global\let\svgwidth\undefined%
  \global\let\svgscale\undefined%
  \makeatother%
  \begin{picture}(1,0.52229538)%
    \lineheight{1}%
    \setlength\tabcolsep{0pt}%
    \put(0,0){\includegraphics[width=\unitlength,page=1]{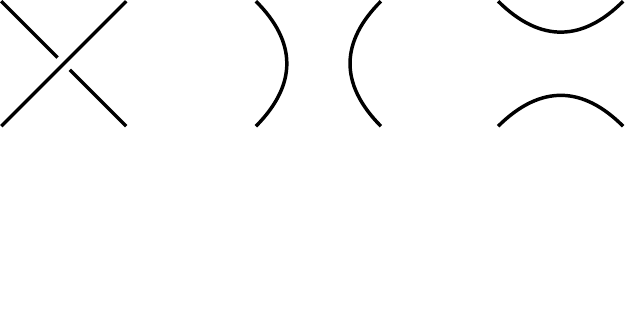}}%
    \put(0.27153498,0.40540645){\makebox(0,0)[lt]{\lineheight{1.25}\smash{\begin{tabular}[t]{l}${}=A$\end{tabular}}}}%
    \put(0.63294564,0.40540645){\makebox(0,0)[lt]{\lineheight{1.25}\smash{\begin{tabular}[t]{l}${}+A^{-1}$\end{tabular}}}}%
    \put(0.27153498,0.08006778){\makebox(0,0)[lt]{\lineheight{1.25}\smash{\begin{tabular}[t]{l}${}=-A^2-A^{-2}$\end{tabular}}}}%
    \put(0,0){\includegraphics[width=\unitlength,page=2]{kbsm.pdf}}%
  \end{picture}%
\endgroup%

\end{equation*}
In the second relation, the unknot bounds a disk in $M$ disjoint from the rest of the link.
The skein module $\Sk(\Sigma)\defeq\Sk(\Sigma\times I)$ has a $\Z[A^{\pm 1}]$-algebra structure, where the multiplication operation is from including the first skein in $\Sigma\times[\frac{1}{2},1]$ and the second in $\Sigma\times[0,\frac{1}{2}]$.
See \cite{Przytycki1998}.

Using the inclusion $\Sigma\incl\Sigma\times I$ that sends $\Sigma$ to $\Sigma\times\{1/2\}$, every simple closed multicurve in $\Sigma$, when included in $\Sigma\times I$, yields a skein in $\Sk(\Sigma)$ called a \emph{simple skein}.
Evidently, every skein in $\Sk(\Sigma)$ is a $\Z[A^{\pm 1}]$-linear combination of simple skeins.

Consider the ring $\mathcal{H}(\Sigma)=\Z[A^{\pm 1}][X_{\pm\gamma}\mid\gamma\in H_1(\Sigma)]/(X_0-1)$, where the $\pm\gamma$ indicates that the variables are associated to unoriented homology classes.
In the following lemma, we define a preliminary invariant of framed oriented surface links.
This invariant is very similar to the surface bracket polynomial in \cite{Dye2004}, however we expand homology classes multiplicatively using the components of each state.

\begin{theorem}
  \label{thm:sfc-poly-1}
  Let $\Sigma$ be a compact oriented surface and let $\mathcal{S}(\Sigma)$ be the set of simple closed multicurves in $\Sigma$.
  Consider the mapping $\mathcal{S}(\Sigma)\to \mathcal{H}(\Sigma)$ defined by
  \[ S \mapsto \prod_{C\in S}(-A^2-A^{-2})X_{\pm[C]},
  \]
  where $C$ ranges over the components of $S$ and $\pm[C]$ is the unoriented homology class of $C$ in $H_1(\Sigma)$.
  Thinking of $\mathcal{S}$ as simple skeins in $\Sk(\Sigma)$, this mapping extends to a $\Z[A^{\pm 1}]$-linear map $\Sk(\Sigma)\to \mathcal{H}(\Sigma)$.
\end{theorem}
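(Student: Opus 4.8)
The plan is to leverage the known module structure of the surface skein algebra. By the excerpt's remark, every skein is a $\Z[A^{\pm 1}]$-linear combination of simple skeins, so if a linear extension exists it is unique, and the content of the theorem is existence. I would use the classical fact (\cite{Przytycki1998}) that $\Sk(\Sigma)$ is a free $\Z[A^{\pm 1}]$-module whose basis $\mathcal{B}$ consists of the isotopy classes of simple closed multicurves in $\Sigma$ with no contractible components (including the empty multicurve). The theorem then reduces to checking that the proposed formula is compatible with the relations expressing an arbitrary simple skein in terms of the basis.

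First I would observe that the assignment $S\mapsto \prod_{C\in S}(-A^2-A^{-2})X_{\pm[C]}$ depends only on the multiset of unoriented homology classes of the components of $S$, hence is invariant under ambient isotopy of $\Sigma$ and descends to a function $\varphi_0$ on the set of simple skeins. Let $\mathcal{F}$ be the free $\Z[A^{\pm 1}]$-module on the set of isotopy classes of all simple closed multicurves in $\Sigma$, let $q\colon\mathcal{F}\to\Sk(\Sigma)$ be the natural surjection, and let $\widetilde\Phi\colon\mathcal{F}\to\mathcal{H}(\Sigma)$ be the $\Z[A^{\pm 1}]$-linear extension of $\varphi_0$. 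It remains to show $\widetilde\Phi$ vanishes on $\ker q$.

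For a multicurve $S$, write $m(S)$ for the number of contractible components and $\mathrm{red}(S)$ for the union of the non-contractible ones; applying the second Kauffman relation repeatedly to an innermost contractible component shows $q([S])=(-A^2-A^{-2})^{m(S)}\,q([\mathrm{red}(S)])$ in $\Sk(\Sigma)$. Since $\mathrm{red}(S)\in\mathcal{B}$ and $\Sk(\Sigma)$ is free on $\mathcal{B}$, the rule $[b]\mapsto[b]$ for $b\in\mathcal{B}$ extends to a $\Z[A^{\pm 1}]$-linear section $s$ of $q$, and $\ker q$ is then generated by the elements $[S]-s(q([S]))=[S]-(-A^2-A^{-2})^{m(S)}[\mathrm{red}(S)]$. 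Finally, because $X_0=1$ in $\mathcal{H}(\Sigma)$, the formula satisfies $\varphi_0(S)=(-A^2-A^{-2})^{m(S)}\varphi_0(\mathrm{red}(S))$, so $\widetilde\Phi$ annihilates each such generator and hence factors through a $\Z[A^{\pm 1}]$-linear map $\Sk(\Sigma)\to\mathcal{H}(\Sigma)$ that restricts to the stated formula on simple skeins.

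I do not anticipate a genuine obstacle; the only nonformal ingredient is the freeness of $\Sk(\Sigma)$ over the reduced multicurves. If one prefers to avoid invoking it, one can instead fix a diagram $D$ on $\Sigma$ for each framed link and set $\widetilde\Phi(D)=\sum_{S}A^{a(S)-b(S)}\prod_{C\in S}(-A^2-A^{-2})X_{\pm[C]}$, summed over the states $S$ of $D$, and then verify independence of the diagram under the framed Reidemeister moves of \Cref{fig:framed-reidemeister}, exactly as in the standard invariance proof for the Kauffman bracket; the only new point is the local observation that a Reidemeister move induces a bijection between the states of the diagram before and after under which corresponding circles are isotopic and hence carry the same homology class. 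This $\widetilde\Phi$ manifestly annihilates both Kauffman skein relators, so it descends to $\Sk(\Sigma)$, and evaluating on a crossingless diagram recovers the formula in the statement.
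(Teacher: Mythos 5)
Your proposal is correct, but your primary argument takes a genuinely different route from the paper. The paper's proof is essentially your fallback: it defines the would-be extension on a framed diagram by the state sum $\sum_S A^{a(S)-b(S)}(-A^2-A^{-2})^{b_0(S)}\prod_{C\in S}X_{\pm[C]}$ and observes that, since \nullhomologous{} state circles contribute $-A^2-A^{-2}$ with no extra $X$ variable (because $X_0=1$), the usual Kauffman-bracket check of invariance under the framed Reidemeister moves goes through verbatim, after which the map manifestly respects both skein relations and restricts to the stated formula on crossingless diagrams. (One small caution on your phrasing of that route: for the RII/RIII checks there is no literal bijection of states --- some states cancel against ones containing an extra circle --- and the point is precisely that those extra circles are contractible, hence contribute the same scalar as in the classical bracket, while the surviving states have isotopic circles.) Your main argument instead invokes Przytycki's theorem that $\Sk(\Sigma)$ is a free $\Z[A^{\pm 1}]$-module on the reduced multicurves, builds the section $s$, identifies $\ker q$ as generated by $[S]-(-A^2-A^{-2})^{m(S)}[\mathrm{red}(S)]$ (your innermost-disk reduction and the kernel-generator argument are both sound), and checks that the formula kills these generators because contractible components have $X_{\pm[C]}=X_0=1$. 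That route buys a cleaner formal factorization with no re-verification of Reidemeister invariance, at the cost of importing a stronger structural theorem; the paper's route is more self-contained and has the side benefit of producing the explicit state-sum formula \eqref{eq:state-sum}, which is what the rest of the paper actually computes with.
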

\begin{proof}
  Given a framed link diagram $L$ in $\Sigma$, then by applying the Kauffman bracket skein relation one can represent its value in $\Sk(\Sigma)$ as a linear combination of simple skeins, and as usual the decomposition into simple skeins is given by a state sum.
  The hypothesized extension of the map when applied to $[L]\in\Sk(\Sigma)$ is given by the following state sum:
  \begin{equation}\label{eq:state-sum}
    [L] \mapsto \sum_{S} A^{a(S)-b(S)}(-A^2-A^{-2})^{b_0(S)}\prod_{C\in S}X_{\pm [C]},
  \end{equation}
  where the sum ranges over Kauffman states for $L$, with $a(S)$ and $b(S)$ respectively denoting the numbers of $A$-type and $B$-type smoothings and $b_0(S)$ is the number of connected components of $S$.
  What remains is to show that this is invariant under the framed Reidemeister moves (\Cref{fig:framed-reidemeister}).
  Note that in the above state sum, \nullhomologous{} $C\in S$ have $\pm[C]=0$ and hence contribute $-A^2-A^{-2}$ without any additional $X_{\pm\gamma}$ variables.
  Thus, the usual check that the Kauffman bracket is invariant under framed Reidemeister moves carries over to this new state sum, and we are done.
\end{proof}

The next step is to use the framed surface link $L\subset \Sigma\times I$ itself to construct an invariant in a ring that does not depend on $\Sigma$.
Suppose $G$ is an abelian group and $\lambda:H_0(L)\to G$ is a group homomorphism, called a \emph{$G$-labeling} of $L$.
We may regard this homomorphism as being a labeling of the components of $L$ since, if $L$ has $n$ components and $L=L_1\sqcup\dots\sqcup L_n$, then $\lambda$ is determined by the values $\lambda_1,\dots,\lambda_n$ of its evaluation at the respective connected components $L_1,\dots,L_n$ of $L$.

Recall that for a compact oriented $n$-manifold $M$ there is a bilinear form $H_k(M)\otimes H_{n-k}(M)\to H_0(M)$ from applying Poincar\'e duals to the cup product $H^k(M,\partial M)\otimes H^{n-k}(M,\partial M)\to H^{n}(M,\partial M)$.
The map ${M}\to {*}$ induces a homomorphism $H_0(M)\to \Z$, and the composition $H_k(M)\otimes H_{n-k}(M)\to \Z$ computes the \emph{algebraic intersection number}, whose operation is denoted by $\alpha\cdot\beta$ for $\alpha\in H_k(M)$ and $\beta\in H_{n-k}(M)$.
In the special case of $H_1(\Sigma)\otimes H_1(\Sigma)\to \Z$, then $\alpha\cdot\beta$ may be computed by realizing $\alpha$ and $\beta$ as transversely intersecting smooth multicurves then summing over all $p\in\alpha\cap\beta$ the sign of $\omega(d_p\alpha,d_p\beta)$, where $\omega$ is an area form for $\Sigma$ and $d_p\alpha$ and $d_p\beta$ denote tangent vectors at $p$ of some oriented parameterizations of $\alpha$ and $\beta$.

Let $\pi:\Sigma\times I\to \Sigma$ be the projection onto the {$\Sigma$} component.
Then the composition with the inclusion of $L$ into $\Sigma\times I$ induces a homomorphism $H_1(L)\to H_1(\Sigma)$.
Hence, we get a map $H_1(L)\otimes H_1(\Sigma)\to \Z$ defined by $\alpha\otimes\beta\mapsto \pi_*(\alpha)\cdot \beta$.
Using the hom-tensor adjunction, this defines a homomorphism $H_1(\Sigma)\to \Hom(H_1(L),\Z)$.
The universal coefficient theorem gives that $\Hom(H_1(L),\Z)\cong H^1(L)$, and Poincar\'e duality for $L$ gives that $H^1(L)\cong H_0(L)$.
Hence, we obtain a homomorphism $H_1(\Sigma)\to H_0(L)$.
Unpacking this, if $[L_1],\dots,[L_n]\in H_1(L)$ are the orientation generators for each component of $L$ then we can define a homomorphism $H_1(\Sigma)\to \Z^n$ by $\alpha\mapsto (\pi_*([L_1])\cdot\alpha,\cdots,\pi_*([L_n])\cdot\alpha)$, and then we can identify $\Z^n$ with $H_0(L)$.

Composing this with {the given homomorphism $\lambda:H_0(L)\to G$}, we define a homomorphism
\[
  h_{L,\lambda}:H_1(\Sigma) \to G.
\]
Unpacking things again, this has the formula
\[
  h_{L,\lambda}(\alpha) = (\pi_*([L_1])\cdot\alpha)\lambda_1 + \cdots + (\pi_*([L_n])\cdot\alpha)\lambda_n
\]
with $\lambda_1,\dots,\lambda_n$ as before.

\begin{definition}
  For $G$ an abelian group, let $R_G=\Z[A^{\pm 1}][X_{\pm g}\mid g\in G]/(X_0-1)$.
  Furthermore, for $n\in\N$, let $R_n=R_{\Z^{n}}$.
  We identify $R_1$ with $\Z[A^{\pm 1},X_0,X_1,X_2,\dots]/(X_0-1)$.
\end{definition}

Note that $\mathcal{H}(\Sigma)=R_{H_1(\Sigma)}$.
When $\phi:G\to G'$ is a homomorphism of abelian groups, then there is an induced ring homomorphism $\phi_*:R_G\to R_{G'}$ defined by $X_{\pm g}\mapsto X_{\pm \phi(g)}$.
Thus, the homomorphism $H_1(\Sigma)\to G$ associated to $\lambda$ induces a ring homomorphism $\mathcal{H}(\Sigma)\to R_G$, which, composed with the invariant from \Cref{thm:sfc-poly-1} defines a $\Z[A^{\pm 1}]$-linear map
\[
  \overline{h}_{L,\lambda}:\Sk(\Sigma)\to R_G.
\]
The state sum formulation of this is given by
\[
  \overline{h}_{L,\lambda}([L']) = \sum_S A^{a(S)-b(S)}(-A^2-A^{-2})^{b_0(S)} \prod_{C\in S} X_{\pm h_{L,\lambda}([C])},
\]
where $S$ ranges over all states for a diagram of the surface link $L'\subset \Sigma\times I$ and $h_{L,\lambda}([C])$ is from computing $h_{L,\lambda}$ on the homology class of an arbitrary orientation of the simple closed curve $C$.
Recall that $X_0=1$.

\begin{definition}
  \label{def:homological-arrow-poly}
  Let $L\subset\Sigma\times I$ be a framed surface link with $\Sigma$ a compact oriented surface, and let $\lambda:H_0({L})\to G$ be a homomorphism with $G$ an abelian group.
  Then the \emph{homological arrow polynomial} is $\mathcal{A}(L,\lambda)=\overline{h}_{L,\lambda}([L])$, which takes values in $R_G$.

  If $L$ has its components labeled by elements of an abelian group $G$, then we write $\mathcal{A}(L)\in R_G$ for $\mathcal{A}(L,\lambda)$ with $\lambda:H_0({L})\to G$ the homomorphism defined by the component labels.

  If $L$ has its components labeled from the set $\{1,\dots,n\}$ (allowing duplicate labels) then we write $\mathcal{A}(L)\in R_n$ for $\mathcal{A}(L,\lambda)$ with $\lambda:H_0({L})\to \Z^n$ defined by component labels, where $i\in\{1,\dots,n\}$ corresponds to the $i$th generator of $\Z^n$.

  If $L$ is an unframed labeled oriented surface link, then, giving $L$ an arbitrary framing and letting $D$ be its virtual link diagram, $(-A^3)^{-\writhe(D)}\mathcal{A}(L,\lambda)$ is the \emph{normalized homological arrow polynomial}.
\end{definition}

\begin{remark}
  In consideration of the state sum formulation, if the link $L$ has its components labeled by elements of the trivial abelian group $\{e\}$, then we have $R_{\{e\}}\cong \Z[A^{\pm 1}]$ and $\mathcal{A}(L)=(-A^2-A^{-2})\langle L\rangle$, where $\langle L\rangle$ is the Kauffman bracket of $L$.
  Note that the $R_n$ rings are a convenience for the case of labeling components by generators of a free abelian group, where for example the ring $R_1=R_{\Z}$ (not to be confused with $R_{\{e\}}$) is used to yield the arrow polynomial in \Cref{thm:arrow-poly-from-homol}.
\end{remark}

\begin{theorem}
  The homological arrow polynomial is an invariant of $G$-labeled framed virtual links.
\end{theorem}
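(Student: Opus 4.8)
The plan is to use that virtual equivalence is generated by weak equivalence — an orientation-preserving diffeomorphism $f:\Sigma\times I\to\Sigma'\times I$ carrying each boundary piece to the corresponding boundary piece (this subsumes framed isotopy and the mapping class group action) — together with destabilization to a compact subsurface, and to verify directly that $\mathcal{A}(L,\lambda)=\overline h_{L,\lambda}([L])$ is unchanged under each move. The guiding principle is that every constituent of the construction — the skein module, the projection $\pi_*$, the algebraic intersection pairing, the hom-tensor adjunction, the universal coefficient isomorphism $\Hom(H_1(L),\Z)\cong H^1(L)$, and Poincaré duality $H^1(L)\cong H_0(L)$ — is functorial, so both verifications are naturality arguments. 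Note first that $\mathcal{A}(L,\lambda)$ is already well-defined on framed oriented $G$-labeled surface links in a fixed $\Sigma\times I$, since by \Cref{thm:sfc-poly-1} the linear map underlying $\overline h_{L,\lambda}$ factors through $\Sk(\Sigma)$ and so is blind to the framed Reidemeister moves, while $h_{L,\lambda}:H_1(\Sigma)\to G$ is constructed from homology and hence is an isotopy invariant of $L$.

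For weak equivalence, let $f:\Sigma\times I\to\Sigma'\times I$ be as above with $f(L)=L'$, and transport the labeling by $\lambda'=\lambda\circ((f|_L)_*)^{-1}:H_0(L')\to G$. Then $f$ restricts to an orientation-preserving diffeomorphism $\bar f:\Sigma\to\Sigma'$ (it carries the induced boundary orientation of $\Sigma\times\{0\}$ to that of $\Sigma'\times\{0\}$), and under the canonical identifications $H_1(\Sigma)\cong H_1(\Sigma\times I)$, $H_1(\Sigma')\cong H_1(\Sigma'\times I)$ the induced map $f_*$ becomes $\bar f_*$, which moreover intertwines the algebraic intersection forms. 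Naturality of the pieces listed above then yields $h_{L',\lambda'}\circ\bar f_*=h_{L,\lambda}$. Since $f$ also induces an isomorphism $f_\#:\Sk(\Sigma)\xrightarrow{\sim}\Sk(\Sigma')$ with $f_\#[L]=[L']$, evaluating the defining formula of $\overline h$ on a simple skein $C\subseteq\Sigma$ gives $\overline h_{L',\lambda'}(f_\#[C])=(-A^2-A^{-2})X_{\pm h_{L',\lambda'}(\bar f_*[C])}=\overline h_{L,\lambda}([C])$; extending $\Z[A^{\pm 1}]$-linearly, $\overline h_{L',\lambda'}\circ f_\#=\overline h_{L,\lambda}$, and evaluating at $[L]$ gives $\mathcal{A}(L',\lambda')=\mathcal{A}(L,\lambda)$.

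For destabilization, let $\Sigma'\subseteq\Sigma$ be a compact subsurface with $L\subset\Sigma'\times I$, keeping $\lambda$. The inclusion $\iota:\Sigma'\incl\Sigma$ induces $\iota_\#:\Sk(\Sigma')\to\Sk(\Sigma)$ with $\iota_\#[L]=[L]$ and $\iota_*:H_1(\Sigma')\to H_1(\Sigma)$. The essential observation is that the algebraic intersection number of two classes carried by multicurves in $\interior\Sigma'$ is the same computed in $\Sigma'$ as in $\Sigma$ (use transverse representatives lying in $\Sigma'$, whose orientation is the restriction of that of $\Sigma$); as $\pi_*[L_i]$ is such a class, this yields $h^{\Sigma'}_{L,\lambda}=h^{\Sigma}_{L,\lambda}\circ\iota_*$. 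Now take a diagram of $L$ lying in $\Sigma'$; by \Cref{thm:sfc-poly-1} it computes $\overline h^{\Sigma}_{L,\lambda}([L])$ as well, its Kauffman states together with the numbers $a(S),b(S),b_0(S)$ are the same read in $\Sigma'$ or in $\Sigma$, and each state component $C$ satisfies $h^{\Sigma'}_{L,\lambda}([C])=h^{\Sigma}_{L,\lambda}(\iota_*[C])$, so the two state sums agree term by term and $\mathcal{A}(L\subset\Sigma'\times I,\lambda)=\mathcal{A}(L\subset\Sigma\times I,\lambda)$. Since weak equivalence and destabilization generate virtual equivalence, the theorem follows. I expect the one point needing genuine care to be the behavior of algebraic intersection numbers under the surface-level maps $\bar f$ and $\iota$ — that is, pinning down the orientation conventions so that $\bar f_*$ preserves the intersection form and $\iota_*$ respects it on the classes that occur; everything else is routine bookkeeping of naturality.
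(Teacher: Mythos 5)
Your proposal is correct and follows essentially the same route as the paper: invariance under destabilization because algebraic intersection numbers of curves in $\Sigma'$ agree whether computed in $\Sigma'$ or in the larger $\Sigma$, and invariance under (weak equivalence by) orientation-preserving diffeomorphisms because they preserve the intersection form, with isotopy/Reidemeister invariance already handled by \Cref{thm:sfc-poly-1}. Your write-up is simply more explicit about transporting the labeling and the naturality bookkeeping than the paper's two-paragraph argument.
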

\begin{proof}
  Consider a $G$-labeled framed oriented surface link $L$ in $\Sigma\times I$, and suppose $\Sigma\subseteq\Sigma'$ for $\Sigma'$ a compact oriented surface.
  The algebraic intersection numbers in $\Sigma$ are the same as those in $\Sigma'$, hence the homological arrow polynomials for $L$ in $\Sigma\times I$ and in $\Sigma'\times I$ are the same.
  Hence, the polynomial is invariant under destabilization.

  The polynomial is a framed virtual link invariant since it is also invariant under the induced action of orientation-preserving diffeomorphisms $f:\Sigma\to \Sigma$.
  This is because $\mathcal{A}(f_*L)=\overline{h}_{f_*L}([f_*L])$ and, in the state sum, the algebraic intersection numbers satisfy $f_*C\cdot f_*C=C\cdot C$.
\end{proof}

\begin{remark}
  As it has some similarity, we briefly give a sketch for how the virtual link invariant $\Xi(L)$ in \cite{Manturov2003}, with a modification, determines the homological arrow polynomial.
  Using the notation from the paper, for a state $s$ in the state sum there is a variable corresponding to $p(s)=\gamma\sqcup \gamma(s)$, where $\gamma$ is the shadow of the knot in $\Sigma$ and $\gamma(s)$ consists of the state curves.
  In principle, combining these curves into a single collection of curves loses information, and by instead considering triples $(\Sigma,\gamma,\gamma(s))$ with $\gamma$ oriented and $G$-labeled, then, with an appropriate modification to the notation of the five elementary equivalences in the paper, one obtains a similar invariant to $\Xi(L)$ that contains enough information to determine both the original $\Xi(L)$ and the homological arrow polynomial, since elementary equivalences preserve intersection numbers.
\end{remark}

\begin{definition}
  A link $L$ is \emph{$1$-labeled} if all its components are labeled by $1$ from the set $\{1\}$.
  In this case, the homological arrow polynomial $\mathcal{A}(L)$ takes values in $R_1=\Z[A^{\pm 1},X_0,X_1,\dots]/(X_0-1)$.
\end{definition}

\begin{theorem}
  Suppose $L$ is a framed virtual link, $G$ and $G'$ are abelian groups, and $\phi:G\to G'$ is a group homomorphism.
  If $\lambda$ is a $G$-labeling, then $\mathcal{A}(L,\phi\circ\lambda)=\phi_*\mathcal{A}(L,\lambda)$.
  In particular, $\mathcal{A}(L,\lambda)=\lambda_*\mathcal{A}(L,\id)$, so the identity labeling determines all other homological arrow polynomials.
\end{theorem}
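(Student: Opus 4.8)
The plan is to reduce everything to the functoriality of the construction $\lambda\mapsto h_{L,\lambda}$ together with the fact that $\phi_*:R_G\to R_{G'}$ is a ring homomorphism; no invariance arguments are needed, since invariance was already established for $\mathcal{A}(L,\lambda)$ itself.

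First I would isolate the part of the construction preceding the statement that does not involve $\lambda$: the algebraic intersection pairing produces a homomorphism $\iota_L\colon H_1(\Sigma)\to H_0(L)$ depending only on $L\subset\Sigma\times I$, and by definition $h_{L,\lambda}=\lambda\circ\iota_L$. Consequently
\[
  h_{L,\phi\circ\lambda}=(\phi\circ\lambda)\circ\iota_L=\phi\circ(\lambda\circ\iota_L)=\phi\circ h_{L,\lambda},
\]
so replacing $\lambda$ by $\phi\circ\lambda$ simply post-composes $h_{L,\lambda}$ with $\phi$; in particular $h_{L,\phi\circ\lambda}([C])=\phi\bigl(h_{L,\lambda}([C])\bigr)$ for every simple closed curve $C$, and this respects the $\pm$ ambiguity since $\phi(-g)=-\phi(g)$.

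Next I would fix a diagram for $L\subset\Sigma\times I$ and compare the two state sums for $\overline h_{L,\lambda}([L])$ and $\overline h_{L,\phi\circ\lambda}([L])$ term by term. They are indexed by the same set of Kauffman states $S$, and the scalar coefficients $A^{a(S)-b(S)}(-A^2-A^{-2})^{b_0(S)}$ are identical; the only difference is that the monomial $\prod_{C\in S}X_{\pm h_{L,\lambda}([C])}$ is replaced by $\prod_{C\in S}X_{\pm h_{L,\phi\circ\lambda}([C])}=\prod_{C\in S}X_{\pm\phi(h_{L,\lambda}([C]))}$. Since $\phi_*$ is defined by $X_{\pm g}\mapsto X_{\pm\phi(g)}$ and is a $\Z[A^{\pm1}]$-algebra homomorphism, it sends each such monomial to the corresponding one and commutes with the (finite) products and with the sum over $S$. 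Hence $\overline h_{L,\phi\circ\lambda}([L])=\phi_*\overline h_{L,\lambda}([L])$, i.e.\ $\mathcal{A}(L,\phi\circ\lambda)=\phi_*\mathcal{A}(L,\lambda)$.

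Finally, for the last clause I would apply what was just proved with $G$ taken to be $H_0(L)$, with the labeling taken to be $\id\colon H_0(L)\to H_0(L)$, and with $\phi$ taken to be the given homomorphism $\lambda\colon H_0(L)\to G$; since $\phi\circ\id=\lambda$ this gives $\mathcal{A}(L,\lambda)=\mathcal{A}(L,\lambda\circ\id)=\lambda_*\mathcal{A}(L,\id)$. The only real obstacle is bookkeeping — confirming that $\iota_L$ is genuinely independent of $\lambda$ (it is, as $\lambda$ enters only at the final composition) and that $\phi_*$ being a ring map legitimately passes through the state sum — so the argument is essentially a diagram chase once the state-sum formulas are in hand.
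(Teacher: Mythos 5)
Your proposal is correct and follows essentially the same route as the paper: the key identity $h_{L,\phi\circ\lambda}=\phi\circ h_{L,\lambda}$ (which the paper uses implicitly and you justify via the factorization through $\iota_L$) is pushed through the state sum using that $\phi_*$ is a ring homomorphism, and the final clause is obtained by specializing to $\phi=\lambda$ with the identity labeling, exactly as in the paper. Your explicit remarks about the $\pm$ ambiguity and the independence of $\iota_L$ from $\lambda$ are harmless additions to the same argument.
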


\begin{proof}
  Recall that a group homomorphism $\phi:G\to G'$ induces a ring homomorphism $\phi_*:R_G\to R_{G'}$ defined by $X_{\pm g}\mapsto X_{\pm \phi(g)}$.
  We see that
  \begin{align*}
    \mathcal{A}(L,\phi\circ\lambda)
    &= \sum_S A^{a(S)-b(S)}(-A^2-A^{-2})^{b_0(S)} \prod_{C\in S} X_{\pm h_{L,\phi\circ\lambda}([C])} \\
    &= \sum_S A^{a(S)-b(S)}(-A^2-A^{-2})^{b_0(S)} \prod_{C\in S} X_{\pm \phi(h_{L,\lambda}([C]))} \\
    &= \sum_S A^{a(S)-b(S)}(-A^2-A^{-2})^{b_0(S)} \prod_{C\in S} \phi_*(X_{\pm h_{L,\lambda}([C])}) \\
    &= \phi_*\mathcal{A}(L,\lambda).
  \end{align*}
  From this, it follows that $\mathcal{A}(L,\lambda)=\mathcal{A}(L,\lambda\circ\id)=\lambda_*\mathcal{A}(L,\id)$.
\end{proof}

\begin{corollary}
  If $L$ is a framed oriented virtual link with components labeled from $\{1,\dots,n\}$, then, by taking $\mathcal{A}(L)$ and substituting $X_{\pm I}\mapsto X_{\abs{\sum_i{I_i}}}$ for each $I\in\Z^n$, the resulting polynomial in $R_1$ is the homological arrow polynomial for $L$ when it is $1$-labeled.
\end{corollary}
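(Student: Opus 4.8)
The plan is to recognize this as an immediate instance of the functoriality theorem proved just above, namely $\mathcal{A}(L,\phi\circ\lambda)=\phi_*\mathcal{A}(L,\lambda)$, applied to a suitably chosen $\phi$. First I would introduce the group homomorphism $\phi\colon\Z^n\to\Z$ that sends each standard generator $e_i$ to $1$; equivalently $\phi(I)=\sum_i I_i$ for $I=(I_1,\dots,I_n)\in\Z^n$. I would then observe that if $\lambda\colon H_0(L)\to\Z^n$ is the homomorphism encoding the labeling of components from $\{1,\dots,n\}$ (so a component with label $i$ maps to $e_i$), then $\phi\circ\lambda\colon H_0(L)\to\Z$ sends every component to $\phi(e_i)=1$. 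By the definition of the $1$-labeling this $\phi\circ\lambda$ is precisely the homomorphism $\Z^1\cong\Z$ defining $\mathcal{A}(L)$ for the $1$-labeled link, so $\mathcal{A}(L,\phi\circ\lambda)$ is the object we want to produce.

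Next I would invoke the previous theorem to get $\mathcal{A}(L,\phi\circ\lambda)=\phi_*\mathcal{A}(L,\lambda)$, where $\phi_*\colon R_{\Z^n}=R_n\to R_{\Z}=R_1$ is the induced ring homomorphism $X_{\pm I}\mapsto X_{\pm\phi(I)}=X_{\pm\sum_i I_i}$. The only thing left is to match this with the substitution stated in the corollary: under the identification of $R_1$ with $\Z[A^{\pm1},X_0,X_1,\dots]/(X_0-1)$, the variable $X_{\pm m}$ for $m\in\Z$ is, by the unoriented-homology convention, the same as $X_{\abs{m}}$, since $X_{+m}$ and $X_{-m}$ are literally the same generator. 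Hence $\phi_*$ is exactly the map $X_{\pm I}\mapsto X_{\abs{\sum_i I_i}}$ described in the statement, and the corollary follows.

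There is essentially no hard step here — the content is entirely in choosing the right $\phi$ and unwinding definitions. The one place a reader could stumble is the bookkeeping between the formal indexing set $\{1,\dots,n\}\hookrightarrow\Z^n$ used to define $\lambda$, the formal singleton $\{1\}\hookrightarrow\Z^1$ used to define the $1$-labeling, and the unoriented convention $X_{\pm m}=X_{\abs{m}}$ in $R_1$; so in the write-up I would spell out each of these identifications explicitly rather than leave them implicit, and otherwise keep the argument to the three lines above. No state-sum manipulation is needed, since all of that work is already absorbed into the statement $\mathcal{A}(L,\phi\circ\lambda)=\phi_*\mathcal{A}(L,\lambda)$.
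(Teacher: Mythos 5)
Your proposal is correct and is essentially identical to the paper's proof: both define $\phi\colon\Z^n\to\Z$ by $\phi(I)=\sum_i I_i$, note that $\phi\circ\lambda$ is the $1$-labeling, and apply the theorem $\mathcal{A}(L,\phi\circ\lambda)=\phi_*\mathcal{A}(L,\lambda)$ with $\phi_*\colon R_n\to R_1$ realizing the stated substitution $X_{\pm I}\mapsto X_{\abs{\sum_i I_i}}$. No gaps; the extra care you take with the identification $X_{\pm m}=X_{\abs{m}}$ in $R_1$ is a reasonable elaboration of what the paper leaves implicit.
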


\begin{proof}
  Let $\lambda:H_0(L)\to\Z^n$ be the homomorphism associated to the labeling.
  Defining $\phi:\Z^n\to\Z$ by $\phi(I)=\sum_{i=1}^n I_i$, we see that $\phi_*:R_n\to R_1$ is defined by $X_{\pm I}\mapsto X_{\abs{\sum_i{I_i}}}$, and hence $\phi_*\mathcal{A}(L,\lambda)$ yields the stated substitution.
  By the previous theorem, this is $\mathcal{A}(L,\phi\circ\lambda)$, and $\phi\circ\lambda : H_0(L)\to \Z$ is the homomorphism associated to the $1$-labeling of $L$.
\end{proof}

\begin{lemma}
  \label{thm:h-evenness}
  For $L$ a $1$-labeled framed oriented surface link with a given framed diagram and for $C$ a component of a state of $L$, then $h_L(C)$ is even.
\end{lemma}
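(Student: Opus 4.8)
The plan is to reduce the claim to an elementary mod~$2$ homology computation. First I would unwind the definition of $h_L$ in the $1$-labeled case: since every component of $L$ is labeled by $1\in\Z$, the formula for $h_{L,\lambda}$ specializes to $h_L([C]) = \pi_*([L])\cdot[C]$, where $[L]=[L_1]+\dots+[L_n]$ is the fundamental class of the link and $\pi_*([L])=[\pi(L)]\in H_1(\Sigma)$ is the class of the immersed projection of $L$. So it suffices to show the integer $[\pi(L)]\cdot[C]$ is even. Since $\Sigma$ is orientable, $H_1(\Sigma;\Z)$ is free and $H_1(\Sigma;\Z/2)=H_1(\Sigma;\Z)\otimes\Z/2$, so the parity of $[\pi(L)]\cdot[C]$ equals the $\Z/2$-intersection number of the reductions $\overline{[\pi(L)]}$ and $\overline{[C]}$ in $H_1(\Sigma;\Z/2)$; concretely it is the number, mod $2$, of transverse intersection points of a generic representative of $\pi(L)$ with the embedded curve $C$.

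The key input is the identity $\overline{[\pi(L)]} = \sum_{C'\in S}\overline{[C']}$ in $H_1(\Sigma;\Z/2)$, where $S$ is the state containing $C$, viewed as a pairwise disjoint collection of simple closed curves on $\Sigma$. To prove it I would start from the given framed diagram, which presents $\pi(L)$ as an immersed multicurve with finitely many transverse double points (the crossings), and resolve these one at a time to reach $S$. Each single smoothing changes the underlying $\Z/2$-$1$-cycle only within a small disk $D_0$ about the crossing, replacing the ``X'' chain there by a pair of disjoint arcs with the same four endpoints on $\partial D_0$; the difference of the two resulting cycles is therefore a $\Z/2$-cycle supported in the disk $D_0$, hence nullhomologous in $\Sigma$. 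An induction over the crossings finishes the identity. (Graphically, this just says that the whiskers appearing on the state curves record the intersection points of $S$, pushed off $\pi(L)$, with $\pi(L)$.)

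To conclude, I would expand by bilinearity:
\[
  \overline{[\pi(L)]}\cdot\overline{[C]} \;=\; \sum_{C'\in S}\overline{[C']}\cdot\overline{[C]}.
\]
Each term with $C'\neq C$ vanishes because $C$ and $C'$ are disjoint simple closed curves, and the term with $C'=C$ vanishes because a simple closed curve on an orientable surface (with or without boundary) has zero self-intersection: the intersection form on $H_1$ is skew-symmetric, so self-pairings are already $0$ over $\Z$ and hence mod $2$; equivalently, $C$ has an annular neighborhood, so a parallel copy of $C$ is disjoint from $C$. Thus $\overline{[\pi(L)]}\cdot\overline{[C]}=0$, i.e.\ $h_L([C])$ is even.

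The only step that needs genuine care is the middle one: verifying that a single crossing smoothing alters the $\Z/2$-homology class of the multicurve by a boundary supported in a disk, keeping track of the fact that the immersed multicurve is genuinely a $\Z/2$-$1$-cycle before and after (regardless of how the smoothings merge or split components of $L$). The remaining ingredients --- unwinding $h_L$ for $1$-labeled links, comparing the integral and mod~$2$ intersection pairings on an orientable surface, and the vanishing of self-intersections --- are routine.
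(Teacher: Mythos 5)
Your proof is correct, but it takes a genuinely different route from the one in the paper. You first observe that in the $1$-labeled case $h_L([C])=\pi_*([L])\cdot[C]$, reduce mod $2$, prove that $\overline{[\pi(L)]}=\sum_{C'\in S}\overline{[C']}$ in $H_1(\Sigma;\Z/2)$ by noting that each smoothing changes the underlying $\Z/2$-cycle only by a cycle supported in a disk, and then expand the pairing: the cross terms vanish because the state curves are disjoint, and the self term vanishes because an embedded curve in an orientable surface has zero self-intersection (integral skewness, or the annular neighborhood). All of these steps are sound, including the care you take that the mod $2$ pairing is the reduction of the integral one, so the parity conclusion is legitimate. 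The paper instead argues locally and diagrammatically: it introduces whiskers recording where the pushed-off link meets the state curves and equips the state loops with ``disorientations'' that reverse at each whisker, so that traversing a loop and returning to the starting orientation forces an even number of whiskers, hence an even difference of left and right whiskers. Your homological argument is cleaner and independent of any pushoff or whisker bookkeeping, while the paper's proof has the side benefit of setting up the whisker/disorientation calculus that is reused later (notably in the comparison with the Dye--Kauffman cusp rules in \Cref{thm:arrow-poly-from-homol} and in the checkerboard-coloring arguments), which is presumably why the author proves the lemma that way.
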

\begin{proof}
  In the diagram for $L$, since $L$ is oriented there is a well-defined left-handed pushoff, which can be thought of as being obtained by walking along the link in the link's orientation with one's left hand extended and tracing the path taken.
  We use the pushed-off version of $L$ when computing algebraic intersection numbers with state curves

  When smoothing a crossing, let us keep track of the local contribution to algebraic intersection number by drawing \emph{whiskers}, which are small rays from $L$ indicating the orientation of $L$ where $L$ intersects the state curves:
  \begin{equation*}
\begingroup%
  \makeatletter%
  \providecommand\color[2][]{%
    \errmessage{(Inkscape) Color is used for the text in Inkscape, but the package 'color.sty' is not loaded}%
    \renewcommand\color[2][]{}%
  }%
  \providecommand\transparent[1]{%
    \errmessage{(Inkscape) Transparency is used (non-zero) for the text in Inkscape, but the package 'transparent.sty' is not loaded}%
    \renewcommand\transparent[1]{}%
  }%
  \providecommand\rotatebox[2]{#2}%
  \newcommand*\fsize{\dimexpr\f@size pt\relax}%
  \newcommand*\lineheight[1]{\fontsize{\fsize}{#1\fsize}\selectfont}%
  \ifx\svgwidth\undefined%
    \setlength{\unitlength}{195.67901015bp}%
    \ifx\svgscale\undefined%
      \relax%
    \else%
      \setlength{\unitlength}{\unitlength * \real{\svgscale}}%
    \fi%
  \else%
    \setlength{\unitlength}{\svgwidth}%
  \fi%
  \global\let\svgwidth\undefined%
  \global\let\svgscale\undefined%
  \makeatother%
  \begin{picture}(1,0.46823443)%
    \lineheight{1}%
    \setlength\tabcolsep{0pt}%
    \put(0,0){\includegraphics[width=\unitlength,page=1]{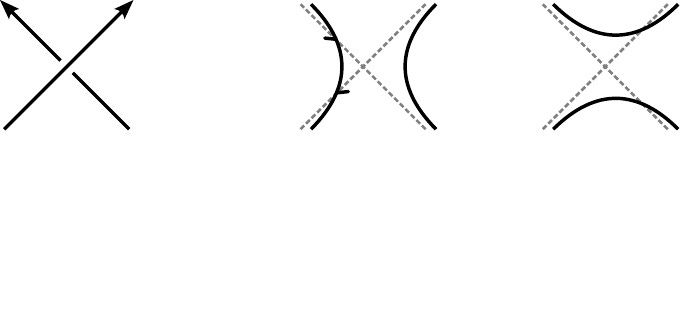}}%
    \put(0.25394481,0.35640579){\makebox(0,0)[lt]{\lineheight{1.25}\smash{\begin{tabular}[t]{l}${}=A$\end{tabular}}}}%
    \put(0.64737624,0.35640579){\makebox(0,0)[lt]{\lineheight{1.25}\smash{\begin{tabular}[t]{l}${}+A^{-1}$\end{tabular}}}}%
    \put(0,0){\includegraphics[width=\unitlength,page=2]{arrow-whisker.pdf}}%
    \put(0.25394481,0.08019556){\makebox(0,0)[lt]{\lineheight{1.25}\smash{\begin{tabular}[t]{l}${}=A^{-1}$\end{tabular}}}}%
    \put(0.64737624,0.08019556){\makebox(0,0)[lt]{\lineheight{1.25}\smash{\begin{tabular}[t]{l}${}+A$\end{tabular}}}}%
    \put(0,0){\includegraphics[width=\unitlength,page=3]{arrow-whisker.pdf}}%
  \end{picture}%
\endgroup%

  \end{equation*}
  After giving the state curves an arbitrary orientation, the algebraic intersection number between $L$ and a state curve $C$ is the difference between the numbers of ``left'' whiskers and of ``right'' whiskers, from the point of view of someone taking a walk along $C$ in the direction of its orientation.
  Of course, the whiskers in the leftmost term of each expansion cancel immediately.

  We can add additional structure (similar to the ``decorated magnetic virtual graphs'' of \cite{Miyazawa2008} or the ``disorientations'' of \cite{Clark2009}) in the form of orientations of the state loops, where at whiskers the orientation reverses:
  \begin{equation*}
\begingroup%
  \makeatletter%
  \providecommand\color[2][]{%
    \errmessage{(Inkscape) Color is used for the text in Inkscape, but the package 'color.sty' is not loaded}%
    \renewcommand\color[2][]{}%
  }%
  \providecommand\transparent[1]{%
    \errmessage{(Inkscape) Transparency is used (non-zero) for the text in Inkscape, but the package 'transparent.sty' is not loaded}%
    \renewcommand\transparent[1]{}%
  }%
  \providecommand\rotatebox[2]{#2}%
  \newcommand*\fsize{\dimexpr\f@size pt\relax}%
  \newcommand*\lineheight[1]{\fontsize{\fsize}{#1\fsize}\selectfont}%
  \ifx\svgwidth\undefined%
    \setlength{\unitlength}{192.67900826bp}%
    \ifx\svgscale\undefined%
      \relax%
    \else%
      \setlength{\unitlength}{\unitlength * \real{\svgscale}}%
    \fi%
  \else%
    \setlength{\unitlength}{\svgwidth}%
  \fi%
  \global\let\svgwidth\undefined%
  \global\let\svgscale\undefined%
  \makeatother%
  \begin{picture}(1,0.47557075)%
    \lineheight{1}%
    \setlength\tabcolsep{0pt}%
    \put(0,0){\includegraphics[width=\unitlength,page=1]{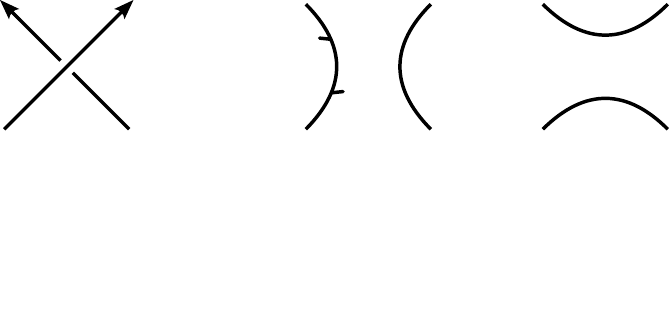}}%
    \put(0.25789872,0.36200094){\makebox(0,0)[lt]{\lineheight{1.25}\smash{\begin{tabular}[t]{l}${}=A$\end{tabular}}}}%
    \put(0.64967088,0.36200094){\makebox(0,0)[lt]{\lineheight{1.25}\smash{\begin{tabular}[t]{l}${}+A^{-1}$\end{tabular}}}}%
    \put(0,0){\includegraphics[width=\unitlength,page=2]{arrow-whisker-disori.pdf}}%
    \put(0.25789872,0.08149013){\makebox(0,0)[lt]{\lineheight{1.25}\smash{\begin{tabular}[t]{l}${}=A^{-1}$\end{tabular}}}}%
    \put(0.64967088,0.08149013){\makebox(0,0)[lt]{\lineheight{1.25}\smash{\begin{tabular}[t]{l}${}+A$\end{tabular}}}}%
    \put(0,0){\includegraphics[width=\unitlength,page=3]{arrow-whisker-disori.pdf}}%
  \end{picture}%
\endgroup%

  \end{equation*}
  Using this, we see that each component of a state has an even number of whiskers due to the alternating orientations.
\end{proof}

\begin{remark}
  For computing the homological arrow polynomial of $G$-labeled framed virtual links, the idea of whiskers from the proof of \Cref{thm:h-evenness} can be modified by labeling each whisker by the label of the component of $L$ that intersects there.
  This idea is represented in \Cref{fig:whisker-expansion}.
  We can use the whisker reduction rules from that figure to arrange for each state loop to have exactly one whisker, and then, in the state sum, a state loop with a whisker labeled by $a\in G$ contributes the variable $X_{\pm a}$.

  For the $1$-labeled case, an algorithm for computing the arrow polynomial in this way (in consideration of \Cref{thm:arrow-poly-from-homol}) is given in \Cref{sec:computing-arrow}, and an algorithm for the homological arrow polynomial when components are labeled from $\{1,\dots,n\}$ is given in \Cref{sec:computing-harrow}.
\end{remark}

\begin{theorem}
  \label{thm:arrow-poly-from-homol}
  Given a $1$-labeled framed surface link $L$, then the Dye--Kauffman arrow polynomial and the homological arrow polynomial satisfy
  \begin{equation*}
    (-A^2-A^{-2})\langle L\rangle_{\mathrm{A}} = \mathcal{A}(L)|_{X_i=K_{i/2}\text{ for all $i$}},
  \end{equation*}
  where the division by two is justified by \Cref{thm:h-evenness}.
\end{theorem}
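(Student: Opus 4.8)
The plan is to write both sides as state sums over the Kauffman states of a fixed framed diagram $D$ for $L$ and to match them monomial by monomial. By \Cref{def:homological-arrow-poly} and the state-sum formula for $\overline{h}_{L,\lambda}$ that precedes it, with $\lambda$ the $1$-labeling,
\[
  \mathcal{A}(L) = \sum_S A^{a(S)-b(S)}(-A^2-A^{-2})^{b_0(S)}\prod_{C\in S}X_{\pm h_L([C])},
\]
and by \Cref{thm:h-evenness} each $h_L([C])=\pi_*([L])\cdot[C]$ is an even integer, so under the substitution $X_i=K_{i/2}$ the loop $C$ contributes $K_{\abs{h_L([C])}/2}$. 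On the other side, multiplying the Dye--Kauffman formula by $-A^2-A^{-2}$ raises the exponent $b_0(S)-1$ to $b_0(S)$, giving
\[
  (-A^2-A^{-2})\langle D\rangle_{\mathrm{A}} = \sum_S A^{a(S)-b(S)}(-A^2-A^{-2})^{b_0(S)}\prod_{C\subseteq S}K_{\abs{n(C)}/2}.
\]
The powers of $A$ and the loop counts $b_0(S)$ agree state by state because both bracket expansions resolve each crossing by the same underlying Kauffman skein relation (cusps and whiskers being mere decorations that do not alter the smoothing choices). Hence it suffices to prove, for every state $S$ and every component $C$ of $S$, the local identity $\abs{n(C)}=\abs{h_L([C])}$.

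To establish this I would identify the cusp bookkeeping of the Dye--Kauffman/Miyazawa construction with the whisker bookkeeping used to compute $h_L$. Orient $D$ and push $\pi(L)$ off to its left, so that state loops meet the pushed-off diagram only near crossings, exactly as in the proof of \Cref{thm:h-evenness}. At each crossing, one of the two smoothings receives no net decoration while the other receives a standard cusp pair (\Cref{fig:arrow-bracket-1}), and correspondingly a whisker pair; rewriting the cusps as Miyazawa's integer-labeled vertex orientations via \Cref{fig:arrow-bracket-cusps}(a) produces precisely the whisker decoration of \Cref{fig:whisker-expansion}(a) specialized to $G=\Z$ with every component labeled by $1$ (a whisker being exactly a degree-$2$ vertex where the state-loop orientation reverses, as in the disorientation picture in the proof of \Cref{thm:h-evenness}). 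Crucially, under this same specialization the reduction rules for integer-labeled vertex orientations in \Cref{fig:arrow-bracket-cusps}(b) are the whisker rules of \Cref{fig:whisker-expansion}(b): the combination rule $(m,n)\mapsto m+n$, the erasure of $0$-labeled decorations, and the reversal $m\mapsto -m$. Reducing all decorations along $C$ therefore leaves, on the Dye--Kauffman side, a single vertex orientation with label $n(C)$, and on the homological side a single whisker whose label is the signed whisker count along $C$; but the latter is by definition the algebraic intersection number $\pi_*([L])\cdot[C]=h_L([C])$. Since the two reductions are the same procedure, $n(C)=\pm h_L([C])$, and in particular $\abs{n(C)}=\abs{h_L([C])}$.

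Combining the two steps, each state $S$ contributes the same monomial $A^{a(S)-b(S)}(-A^2-A^{-2})^{b_0(S)}\prod_{C}K_{\abs{n(C)}/2}$ to $\mathcal{A}(L)|_{X_i=K_{i/2}}$ and to $(-A^2-A^{-2})\langle D\rangle_{\mathrm{A}}$, so the two polynomials agree. The part requiring genuine care is the middle step: one has to pin down the orientation and sign conventions — which of the two smoothings at a crossing is decorated, the direction of the left pushoff, and the signs attached to cusps versus whiskers — precisely enough to see that the Miyazawa cusp reduction and the whisker reduction are literally the same calculation. Everything else, namely the exponent shift by one in $-A^2-A^{-2}$ and the substitution $X_{\pm 2m}\mapsto K_{\abs{m}}$ (legitimate by the evenness in \Cref{thm:h-evenness}), is immediate.
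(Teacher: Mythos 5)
Your proposal is correct and follows essentially the same route as the paper: both compare the two state sums term by term and reduce the problem to the dictionary between Dye--Kauffman cusps and the whiskers of \Cref{thm:h-evenness}, observing that the cusp cancelation rules are exactly the whisker cancelation rules, so the reduced cusp count $n(C)$ equals (up to sign) the net whisker count $h_L([C])$. The convention-checking you flag as the delicate middle step is handled in the paper at the same level of detail, via the explicit cusp-to-whisker replacement figure.
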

\begin{proof}
  The Dye--Kauffman arrow polynomial is from expanding crossings according to \Cref{fig:arrow-bracket-1} then reducing cusps according to the following two rules:
  \begin{equation*}
\begingroup%
  \makeatletter%
  \providecommand\color[2][]{%
    \errmessage{(Inkscape) Color is used for the text in Inkscape, but the package 'color.sty' is not loaded}%
    \renewcommand\color[2][]{}%
  }%
  \providecommand\transparent[1]{%
    \errmessage{(Inkscape) Transparency is used (non-zero) for the text in Inkscape, but the package 'transparent.sty' is not loaded}%
    \renewcommand\transparent[1]{}%
  }%
  \providecommand\rotatebox[2]{#2}%
  \newcommand*\fsize{\dimexpr\f@size pt\relax}%
  \newcommand*\lineheight[1]{\fontsize{\fsize}{#1\fsize}\selectfont}%
  \ifx\svgwidth\undefined%
    \setlength{\unitlength}{243.78809386bp}%
    \ifx\svgscale\undefined%
      \relax%
    \else%
      \setlength{\unitlength}{\unitlength * \real{\svgscale}}%
    \fi%
  \else%
    \setlength{\unitlength}{\svgwidth}%
  \fi%
  \global\let\svgwidth\undefined%
  \global\let\svgscale\undefined%
  \makeatother%
  \begin{picture}(1,0.07920342)%
    \lineheight{1}%
    \setlength\tabcolsep{0pt}%
    \put(0,0){\includegraphics[width=\unitlength,page=1]{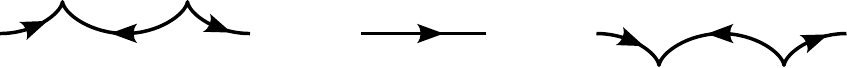}}%
    \put(0.33824592,0.02867412){\makebox(0,0)[lt]{\lineheight{1.25}\smash{\begin{tabular}[t]{l}$\leftrightarrow$\end{tabular}}}}%
    \put(0.6168614,0.02867412){\makebox(0,0)[lt]{\lineheight{1.25}\smash{\begin{tabular}[t]{l}$\leftrightarrow$\end{tabular}}}}%
  \end{picture}%
\endgroup%

  \end{equation*}
  As a state sum,
  \begin{equation*}
    \langle L \rangle_{\mathrm{A}} = \sum_{S} A^{a(S)-b(S)} (-A^2-A^{-2})^{b_0(S)-1} \prod_{C\subseteq S} K_{\gamma(C)/2},
  \end{equation*}
  where $\gamma(C)$ is the number of cusps in component $C$ after cusp reduction.
  This is an even number in consideration of the alternating arc orientations.

  Consider replacing cusps in the arrow polynomial with whiskers in the following manner:
  \begin{equation*}
\begingroup%
  \makeatletter%
  \providecommand\color[2][]{%
    \errmessage{(Inkscape) Color is used for the text in Inkscape, but the package 'color.sty' is not loaded}%
    \renewcommand\color[2][]{}%
  }%
  \providecommand\transparent[1]{%
    \errmessage{(Inkscape) Transparency is used (non-zero) for the text in Inkscape, but the package 'transparent.sty' is not loaded}%
    \renewcommand\transparent[1]{}%
  }%
  \providecommand\rotatebox[2]{#2}%
  \newcommand*\fsize{\dimexpr\f@size pt\relax}%
  \newcommand*\lineheight[1]{\fontsize{\fsize}{#1\fsize}\selectfont}%
  \ifx\svgwidth\undefined%
    \setlength{\unitlength}{112.44733795bp}%
    \ifx\svgscale\undefined%
      \relax%
    \else%
      \setlength{\unitlength}{\unitlength * \real{\svgscale}}%
    \fi%
  \else%
    \setlength{\unitlength}{\svgwidth}%
  \fi%
  \global\let\svgwidth\undefined%
  \global\let\svgscale\undefined%
  \makeatother%
  \begin{picture}(1,0.35006012)%
    \lineheight{1}%
    \setlength\tabcolsep{0pt}%
    \put(0,0){\includegraphics[width=\unitlength,page=1]{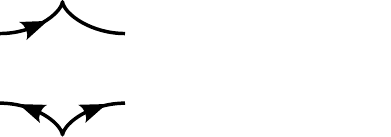}}%
    \put(0.42824234,0.23752362){\makebox(0,0)[lt]{\lineheight{1.25}\smash{\begin{tabular}[t]{l}$\longrightarrow$\end{tabular}}}}%
    \put(0,0){\includegraphics[width=\unitlength,page=2]{cusp-whisker.pdf}}%
    \put(0.42824234,0.05917807){\makebox(0,0)[lt]{\lineheight{1.25}\smash{\begin{tabular}[t]{l}$\longrightarrow$\end{tabular}}}}%
    \put(0,0){\includegraphics[width=\unitlength,page=3]{cusp-whisker.pdf}}%
  \end{picture}%
\endgroup%

  \end{equation*}
  Notice that the relations for the arrow polynomial in \Cref{fig:arrow-bracket-1} after this replacement become the relations in the proof of \Cref{thm:h-evenness}.
  Furthermore, notice that the cusp cancelation rules are precisely the whisker cancelation rules.
  Hence, the number of cusps after cusp cancellation for a given state with respect to the arrow polynomial is equal to the net number of whiskers for that state with respect to the homological arrow polynomial.
  From this we complete the proof.
\end{proof}

\begin{example}
  The right-handed virtual Hopf link with components labeled by $1$ and $2$ has homological arrow polynomial $(-A^2-A^{-2})(AX_{1,-1}+A^{-1}X_{1,1})$:
  \[
\begingroup%
  \makeatletter%
  \providecommand\color[2][]{%
    \errmessage{(Inkscape) Color is used for the text in Inkscape, but the package 'color.sty' is not loaded}%
    \renewcommand\color[2][]{}%
  }%
  \providecommand\transparent[1]{%
    \errmessage{(Inkscape) Transparency is used (non-zero) for the text in Inkscape, but the package 'transparent.sty' is not loaded}%
    \renewcommand\transparent[1]{}%
  }%
  \providecommand\rotatebox[2]{#2}%
  \newcommand*\fsize{\dimexpr\f@size pt\relax}%
  \newcommand*\lineheight[1]{\fontsize{\fsize}{#1\fsize}\selectfont}%
  \ifx\svgwidth\undefined%
    \setlength{\unitlength}{236.34833664bp}%
    \ifx\svgscale\undefined%
      \relax%
    \else%
      \setlength{\unitlength}{\unitlength * \real{\svgscale}}%
    \fi%
  \else%
    \setlength{\unitlength}{\svgwidth}%
  \fi%
  \global\let\svgwidth\undefined%
  \global\let\svgscale\undefined%
  \makeatother%
  \begin{picture}(1,0.27272311)%
    \lineheight{1}%
    \setlength\tabcolsep{0pt}%
    \put(0,0){\includegraphics[width=\unitlength,page=1]{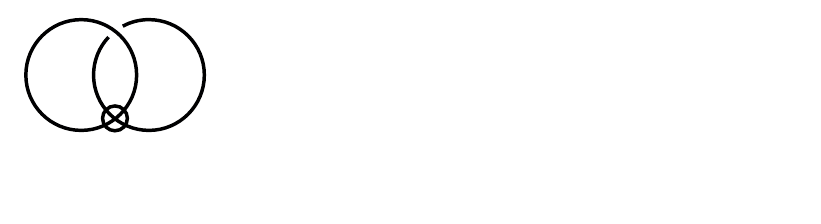}}%
    \put(-0.0010493,0.15831374){\makebox(0,0)[lt]{\lineheight{1.25}\smash{\begin{tabular}[t]{l}$1$\end{tabular}}}}%
    \put(0.25489818,0.15951737){\makebox(0,0)[lt]{\lineheight{1.25}\smash{\begin{tabular}[t]{l}$2$\end{tabular}}}}%
    \put(0,0){\includegraphics[width=\unitlength,page=2]{vhopf.pdf}}%
    \put(0.55166517,0.21087236){\makebox(0,0)[lt]{\lineheight{1.25}\smash{\begin{tabular}[t]{l}$1$\end{tabular}}}}%
    \put(0.50619952,0.17646962){\makebox(0,0)[lt]{\lineheight{1.25}\smash{\begin{tabular}[t]{l}$2$\end{tabular}}}}%
    \put(0.30884618,0.17403143){\makebox(0,0)[lt]{\lineheight{1.25}\smash{\begin{tabular}[t]{l}${}=A$\end{tabular}}}}%
    \put(0,0){\includegraphics[width=\unitlength,page=3]{vhopf.pdf}}%
    \put(0.8323604,0.20297366){\makebox(0,0)[lt]{\lineheight{1.25}\smash{\begin{tabular}[t]{l}$1$\end{tabular}}}}%
    \put(0.88252401,0.25987768){\makebox(0,0)[lt]{\lineheight{1.25}\smash{\begin{tabular}[t]{l}$2$\end{tabular}}}}%
    \put(0.65464306,0.17403143){\makebox(0,0)[lt]{\lineheight{1.25}\smash{\begin{tabular}[t]{l}${}+A^{-1}$\end{tabular}}}}%
    \put(0.30884618,0.00267402){\makebox(0,0)[lt]{\lineheight{1.25}\smash{\begin{tabular}[t]{l}${}=A(-A^2-A^{-2})X_{1,-1}+A^{-1}(-A^2-A^{-2})X_{1,1}.$\end{tabular}}}}%
  \end{picture}%
\endgroup%

  \]
  Hence, if each component is labeled by $1$ the corresponding polynomial is $(-A^2-A^{-2})(A+A^{-1}X_2)$ and thus the arrow polynomial is $A+A^{-1}K_1$.
\end{example}

\section{Properties}
\label{sec:properties}

\subsection{Basic properties}

\begin{proposition}
  Suppose $G$ is an abelian group.
  If $L$ and $L'$ are $G$-labeled framed oriented virtual links, then
  \begin{equation*}
    \mathcal{A}(L\sqcup L')=\mathcal{A}(L)\mathcal{A}(L'). 
  \end{equation*}
\end{proposition}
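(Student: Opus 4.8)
The plan is to evaluate the left-hand side on a convenient surface representative and observe that the defining state sum factors. Choose framed oriented surface links $L\subset\Sigma\times I$ and $L'\subset\Sigma'\times I$ representing the two virtual links, with $G$-labelings $\lambda\colon H_0(L)\to G$ and $\lambda'\colon H_0(L')\to G$. Then $L\sqcup L'\subset(\Sigma\sqcup\Sigma')\times I$ is a representative of the disjoint union as a $G$-labeled framed virtual link, where the labeling is the homomorphism $H_0(L\sqcup L')=H_0(L)\oplus H_0(L')\to G$ that restricts to $\lambda$ and $\lambda'$ on the two summands. A disjoint union of compact oriented surfaces is an admissible ambient surface, so by the invariance of the homological arrow polynomial we may compute $\mathcal{A}(L\sqcup L')$ from this representative.

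The first step is to understand $h_{L\sqcup L',\,\lambda\oplus\lambda'}$. We have $H_1\bigl((\Sigma\sqcup\Sigma')\times I\bigr)=H_1(\Sigma)\oplus H_1(\Sigma')$, and the algebraic intersection pairing vanishes identically between classes supported in different connected components. Hence $h_{L\sqcup L',\,\lambda\oplus\lambda'}$ is the direct sum of $h_{L,\lambda}$ on $H_1(\Sigma)$ and $h_{L',\lambda'}$ on $H_1(\Sigma')$; in particular, for a simple closed curve $C$ lying in $\Sigma$ we get $h_{L\sqcup L',\,\lambda\oplus\lambda'}([C])=h_{L,\lambda}([C])$, and similarly for $C\subset\Sigma'$. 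The second step is to factor the state sum. Fix diagrams $D$ for $L$ and $D'$ for $L'$; then $D\sqcup D'$ is a diagram for $L\sqcup L'$ whose Kauffman states are exactly the pairs $S=S_1\sqcup S_2$ with $S_1$ a state of $D$ and $S_2$ a state of $D'$. For such $S$ we have $a(S)=a(S_1)+a(S_2)$, $b(S)=b(S_1)+b(S_2)$, and $b_0(S)=b_0(S_1)+b_0(S_2)$, every component of $S$ lies entirely in $\Sigma$ or entirely in $\Sigma'$, and by the first step the variable $X_{\pm h(\cdot)}$ attached to a component depends only on the link on the side it lives on. Substituting into the state sum for $\overline{h}_{L\sqcup L',\,\lambda\oplus\lambda'}([L\sqcup L'])$ and separating the sum over $S_1$ from the sum over $S_2$ exhibits it as the product $\mathcal{A}(L,\lambda)\,\mathcal{A}(L',\lambda')=\mathcal{A}(L)\mathcal{A}(L')$.

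I do not expect a genuine obstacle here; the only points requiring (minor) care are that a disjoint union of surfaces is allowed as an ambient surface and that the intersection pairing is ``block diagonal'' across components, both of which are immediate from the setup. As an alternative to the state-sum bookkeeping, one can argue skein-theoretically: there is an isomorphism $\Sk\bigl((\Sigma\sqcup\Sigma')\times I\bigr)\cong\Sk(\Sigma)\otimes_{\Z[A^{\pm1}]}\Sk(\Sigma')$ carrying $[L\sqcup L']$ to $[L]\otimes[L']$, and under this identification $\overline{h}_{L\sqcup L',\,\lambda\oplus\lambda'}$ is $\overline{h}_{L,\lambda}\otimes\overline{h}_{L',\lambda'}$ followed by the multiplication $R_G\otimes_{\Z[A^{\pm1}]}R_G\to R_G$, since $\overline{h}$ is multiplicative on simple skeins with respect to disjoint union and $h$ splits as above; evaluating on $[L]\otimes[L']$ yields the claim.
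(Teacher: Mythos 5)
Your argument is correct and is essentially the paper's proof, which simply notes that the state sum formulation of $\mathcal{A}$ factors over a disjoint-union representative; your write-up just spells out the block-diagonality of the intersection pairing and the additivity of $a(S)$, $b(S)$, $b_0(S)$ that make the factorization work (and this is consistent because the exponent in the state sum is $b_0(S)$, not $b_0(S)-1$). No issues to report.
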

\begin{proof}
  One can factor the state sum formulation of the homological arrow polynomial.
\end{proof}

\begin{proposition}
  \label{thm:orientation-reversal}
  If $L$ is a labeled framed oriented virtual link with labels from $\{1,\dots,n\}$ and $r_iL$ is $L$ but with all components having label $i$ given reversed orientation, then $\mathcal{A}(r_iL)=\left.\mathcal{A}(L)\right|_{X_{\pm I}\mapsto X_{\pm r_iI}}$, where $r_i(m_1,\dots,m_i,\dots,m_n)=(m_1,\dots,-m_i,\dots,m_n)$.
\end{proposition}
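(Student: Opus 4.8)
The plan is to trace through the state-sum formula for $\mathcal{A}(L)$ and see how reversing the orientation of the label-$i$ components affects each ingredient. Recall from \Cref{def:homological-arrow-poly} that
\[
  \mathcal{A}(L) = \sum_S A^{a(S)-b(S)}(-A^2-A^{-2})^{b_0(S)} \prod_{C\in S} X_{\pm h_{L,\lambda}([C])},
\]
where $\lambda:H_0(L)\to\Z^n$ is the homomorphism recording the labels. The set of Kauffman states $S$, together with the numbers $a(S)$, $b(S)$, $b_0(S)$, depends only on the underlying unoriented diagram and is therefore unchanged when we pass from $L$ to $r_iL$. So the only thing that can change is the collection of variables $X_{\pm h_{L,\lambda}([C])}$, and it suffices to show that for every state $S$ and every component $C$ of $S$ one has $h_{r_iL,\lambda}([C]) = \pm\, r_i\bigl(h_{L,\lambda}([C])\bigr)$, the sign being immaterial since $X_{\pm g}$ is used.

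Next I would recall the explicit formula
\[
  h_{L,\lambda}(\alpha) = (\pi_*([L_1])\cdot\alpha)\lambda_1 + \cdots + (\pi_*([L_n])\cdot\alpha)\lambda_n,
\]
where $[L_1],\dots,[L_n]$ are the orientation generators of the components of $L$. Here the labels $\lambda_1,\dots,\lambda_n$ are fixed elements of $\Z^n$ (the standard basis vectors, possibly with repetitions). Passing from $L$ to $r_iL$ reverses the orientation of precisely those components whose label is $i$, which replaces the corresponding orientation generators $[L_j]$ by $-[L_j]$, hence replaces $\pi_*([L_j])\cdot\alpha$ by $-\pi_*([L_j])\cdot\alpha$ for exactly those $j$ with $\lambda_j = e_i$. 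Writing out $h_{r_iL,\lambda}(\alpha)$ coordinate-by-coordinate in $\Z^n$, every coordinate other than the $i$th is a sum of terms $\pm(\pi_*([L_j])\cdot\alpha)$ over the $j$ with $\lambda_j = e_k$ for $k\neq i$; those $j$ are untouched, so those coordinates agree with those of $h_{L,\lambda}(\alpha)$. The $i$th coordinate is the sum over $j$ with $\lambda_j = e_i$ of $\pi_*([L_j])\cdot\alpha$, and each such term has flipped sign, so the $i$th coordinate of $h_{r_iL,\lambda}(\alpha)$ is minus the $i$th coordinate of $h_{L,\lambda}(\alpha)$. This is exactly the statement that $h_{r_iL,\lambda} = r_i\circ h_{L,\lambda}$ as homomorphisms $H_1(\Sigma)\to\Z^n$.

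Applying this with $\alpha = [C]$ for each state curve $C$, we get $X_{\pm h_{r_iL,\lambda}([C])} = X_{\pm r_i(h_{L,\lambda}([C]))}$, and substituting term-by-term into the state sum yields
\[
  \mathcal{A}(r_iL) = \sum_S A^{a(S)-b(S)}(-A^2-A^{-2})^{b_0(S)} \prod_{C\in S} X_{\pm r_i(h_{L,\lambda}([C]))} = \left.\mathcal{A}(L)\right|_{X_{\pm I}\mapsto X_{\pm r_iI}},
\]
which is the claim. I do not anticipate a serious obstacle here; the only point requiring a little care is bookkeeping the relationship between reversing a component's geometric orientation and negating its orientation generator in $H_1(L)$, together with the observation that $r_i$ acts on $\Z^n$ exactly by negating the coordinates dual to label $i$ — precisely the coordinates affected. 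One should also note that $r_i$ commutes with the $\pm$ identification (since $r_i$ is linear and $r_i(-I) = -r_i(I)$), so the substitution $X_{\pm I}\mapsto X_{\pm r_iI}$ is well-defined on $R_n$.
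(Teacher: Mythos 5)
Your proposal is correct and is essentially the paper's argument: the paper phrases it in the whisker calculus (reversing the orientation of the label-$i$ components reflects their whiskers, swapping the roles of left and right, so that label's intersection count is negated), while you verify the same fact directly from the homological definition by showing $h_{r_iL,\lambda}=r_i\circ h_{L,\lambda}$ and observing that the unoriented state data $a(S)$, $b(S)$, $b_0(S)$ is unchanged. Your added remarks on the well-definedness of the substitution under the $\pm$ identification are a fine bit of extra care but do not change the substance.
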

\begin{proof}
  By reversing the orientation of all components of a particular label, in the state sum the whiskers of that label are all reflected over their respective component.
  Hence, the roles of ``left'' and ``right'' whiskers are reversed in the difference, so that label's count is negated.
\end{proof}

This proposition can be generalized to the case of labelings where the label group is a product of abelian groups, one per component of the link, with each component labeled by an element of its own group.
Without this additional structure, there is at least the following:

\begin{proposition}
  Suppose $G$ is an abelian group.
  If $L$ is a $G$-labeled framed oriented virtual link with labeling $\lambda$ and $rL$ is $L$ but with opposite orientations, then
  \[
    \mathcal{A}(rL,\lambda) = \mathcal{A}(L,-\lambda) = \mathcal{A}(L,\lambda),
  \]
  where $-\lambda$ is the composition of $\lambda$ with negation.
\end{proposition}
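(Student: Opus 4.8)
The plan is to reduce the statement to two elementary facts: first, that reversing the orientations of all components of $L$ negates the homomorphism $h_{L,\lambda}\colon H_1(\Sigma)\to G$; and second, that the variable $X_{\pm g}$ of $R_G$ depends on $g$ only up to sign, so negating $h_{L,\lambda}$ has no effect on the state sum.

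For the first fact I would unpack the construction of $h_{L,\lambda}$ from \Cref{sec:homological-arrow-poly}. Writing $L=L_1\sqcup\dots\sqcup L_n$ with orientation generators $[L_i]\in H_1(L)$, reversing every component orientation replaces each $[L_i]$ by $-[L_i]$, hence $\pi_*([L_i])$ by $-\pi_*([L_i])$, so that $h_{rL,\lambda}(\alpha)=\sum_i\bigl(\pi_*(-[L_i])\cdot\alpha\bigr)\lambda_i=-h_{L,\lambda}(\alpha)$. On the other hand, directly from the formula $h_{L,-\lambda}(\alpha)=\sum_i\bigl(\pi_*([L_i])\cdot\alpha\bigr)(-\lambda_i)=-h_{L,\lambda}(\alpha)$. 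Therefore $h_{rL,\lambda}=h_{L,-\lambda}$ as homomorphisms $H_1(\Sigma)\to G$.

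Next I would note that $[rL]=[L]$ as elements of $\Sk(\Sigma)$, since skeins are unoriented. Consequently the two state sums $\mathcal{A}(rL,\lambda)=\sum_S A^{a(S)-b(S)}(-A^2-A^{-2})^{b_0(S)}\prod_{C\in S}X_{\pm h_{rL,\lambda}([C])}$ and $\mathcal{A}(L,-\lambda)=\sum_S A^{a(S)-b(S)}(-A^2-A^{-2})^{b_0(S)}\prod_{C\in S}X_{\pm h_{L,-\lambda}([C])}$ run over the same states of the same underlying diagram, and since $h_{rL,\lambda}=h_{L,-\lambda}$ they agree term by term; this gives $\mathcal{A}(rL,\lambda)=\mathcal{A}(L,-\lambda)$. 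For the remaining equality I would apply the theorem relating $\mathcal{A}(L,\phi\circ\lambda)$ and $\phi_*\mathcal{A}(L,\lambda)$ with $\phi\colon G\to G$ the negation homomorphism, obtaining $\mathcal{A}(L,-\lambda)=\phi_*\mathcal{A}(L,\lambda)$; but $\phi_*$ sends $X_{\pm g}\mapsto X_{\pm\phi(g)}=X_{\pm(-g)}=X_{\pm g}$, so $\phi_*$ is the identity endomorphism of $R_G$ and $\mathcal{A}(L,-\lambda)=\mathcal{A}(L,\lambda)$. Chaining the two equalities finishes the argument.

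I do not expect a genuine obstacle here; the only point requiring care is the bookkeeping in the first fact, namely confirming that a simultaneous reversal of all component orientations negates each $[L_i]$, hence $\pi_*$, and that this matches precomposing $\lambda$ with negation. Alternatively one can argue pictorially via the whisker calculus of \Cref{thm:h-evenness}: reversing every component orientation reflects every whisker, negating every algebraic intersection count, which is the substitution $X_{\pm I}\mapsto X_{\pm(-I)}$ of \Cref{thm:orientation-reversal} applied to all labels at once, and since $X_{\pm(-I)}=X_{\pm I}$ this changes nothing. I would include a short remark to that effect for the reader who prefers the diagrammatic viewpoint.
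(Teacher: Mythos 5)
Your proof is correct. The paper states this proposition without any proof (it is left as a consequence of the whisker-reflection argument of the preceding proposition together with the theorem $\mathcal{A}(L,\phi\circ\lambda)=\phi_*\mathcal{A}(L,\lambda)$), and your argument — computing $h_{rL,\lambda}=-h_{L,\lambda}=h_{L,-\lambda}$, using that $[rL]=[L]$ in $\Sk(\Sigma)$ because skeins are unoriented, and noting that negation induces the identity on $R_G$ since $X_{\pm g}$ depends on $g$ only up to sign — is exactly the intended argument in the paper's framework.
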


The following corollary is \cite[Theorem 1.1]{Dye2008} (note that the theorem statement claims it is independent of the orientation of link diagram, but the proof assumes only total orientation reversal).
\begin{corollary}
  If $L$ is a $1$-labeled framed virtual link, then $\mathcal{A}(rL)=\mathcal{A}(L)$, where $rL$ is $L$ but with opposite orientations.
\end{corollary}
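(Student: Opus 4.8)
The plan is to deduce this immediately from the preceding proposition. Recall that a $1$-labeled link means the labeling homomorphism $\lambda\colon H_0(L)\to\Z$ sending every component to $1$, and that $\mathcal A(L)\in R_1$ is by definition $\mathcal A(L,\lambda)$ for this $\lambda$. First I would note that reversing all the orientations of $L$ alters neither the underlying space of $L$ nor its set of components, so $rL$ is $1$-labeled by the very same homomorphism $\lambda$, whence $\mathcal A(rL)=\mathcal A(rL,\lambda)$ in $R_1$. Then apply the immediately preceding proposition with $G=\Z$: it yields $\mathcal A(rL,\lambda)=\mathcal A(L,-\lambda)=\mathcal A(L,\lambda)=\mathcal A(L)$, which is the assertion. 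The middle equality is the only substantive point, and it holds because the negation map $\Z\to\Z$ induces the identity endomorphism of $R_1$, since the variable $X_{\pm g}$ depends only on the unoriented class $\pm g$.

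Alternatively one can cite \Cref{thm:orientation-reversal} with $n=1$: since every component of $L$ carries the single label $1$, we have $rL=r_1L$, and the induced substitution $X_{\pm I}\mapsto X_{\pm r_1 I}$ is the identity of $R_1$ because $r_1I=-I$ and $\{I,-I\}=\{-I,I\}$, so the variables are unchanged. The underlying mechanism is exactly the one from the proof of \Cref{thm:orientation-reversal}: reversing every orientation reflects each whisker over its strand, negating the net whisker count along every state loop, while the smoothing counts $a(S),b(S)$, the number of state loops $b_0(S)$, and the unoriented homology classes $\pm[C]$ of the state curves are all manifestly orientation-independent.

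I do not anticipate any real obstacle here: the content was already carried out in the previous proposition (and in \Cref{thm:orientation-reversal}). The only care needed is the bookkeeping remark that $rL$ inherits $L$'s labeling unchanged, so that comparing $\mathcal A(rL)$ with $\mathcal A(L)$ really is comparing $\mathcal A(rL,\lambda)$ with $\mathcal A(L,\lambda)$ for one and the same $\lambda$; the proof can then be a single sentence pointing to the preceding proposition.
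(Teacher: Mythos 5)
Your argument is correct and is exactly the route the paper intends: the corollary is the immediately preceding proposition specialized to $G=\Z$ with $\lambda$ the $1$-labeling, the key point being that $X_{\pm g}$ depends only on the unoriented class so negation induces the identity on $R_1$. The alternative via \Cref{thm:orientation-reversal} with $n=1$ is the same mechanism in diagrammatic form, so there is nothing to add.
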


\begin{proposition}
  Suppose $G$ is an abelian group.
  If $L$ is a $G$-labeled framed oriented virtual link and $mL$ is its mirror image, then
  \begin{equation*}
    \mathcal{A}(mL)=\left.\mathcal{A}(L)\right|_{A\mapsto A^{-1}}. 
  \end{equation*}
\end{proposition}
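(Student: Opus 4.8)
The plan is to work from the state sum formula for the homological arrow polynomial and track how each ingredient transforms under mirroring. Recall that taking the mirror image $mL$ of a diagram interchanges the two types of crossings, which has the effect of swapping the $A$-smoothing and $B$-smoothing at each crossing: a state $S$ for $D$ corresponds to a state $\overline{S}$ for the mirror diagram $mD$ with $a(\overline{S}) = b(S)$ and $b(\overline{S}) = a(S)$, while the underlying collection of state curves (as unoriented multicurves in $\Sigma$) is identical. First I would set up this bijection $S \mapsto \overline{S}$ between Kauffman states carefully, noting that since mirroring reverses the orientation of $\Sigma$ it does not change which simple closed curves arise, only the labels $a(S)-b(S)$ attached to them.

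Next I would check that the homology-class data $h_{L,\lambda}([C])$ is unaffected up to the sign ambiguity already built into the $X_{\pm g}$ variables. The key point is that $h_{L,\lambda}$ is computed from algebraic intersection numbers in $\Sigma$, and reversing the orientation of $\Sigma$ negates the intersection form, i.e. $\alpha\cdot\beta$ becomes $-(\alpha\cdot\beta)$; hence $h_{mL,\lambda}([C]) = -h_{L,\lambda}([C])$. But $X_{\pm g} = X_{\mp g}$ in $R_G$, so the variable contributed by each state curve is literally unchanged. Therefore the only net effect of mirroring on the state sum is the exchange $a(S)\leftrightarrow b(S)$, which replaces $A^{a(S)-b(S)}$ by $A^{b(S)-a(S)} = A^{-(a(S)-b(S))}$, while leaving $(-A^2-A^{-2})^{b_0(S)}$ invariant under $A\mapsto A^{-1}$ and leaving each $X_{\pm h_{L,\lambda}([C])}$ alone. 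Summing over the bijection $S\mapsto\overline{S}$ then gives precisely $\mathcal{A}(mL) = \left.\mathcal{A}(L)\right|_{A\mapsto A^{-1}}$.

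The main obstacle, such as it is, is bookkeeping the orientation reversal of $\Sigma$ under mirroring and confirming it interacts correctly with both the intersection form and the choice of state curves: one must be sure that "mirror image" is interpreted as reflecting the thickened surface $\Sigma\times I$ (equivalently, composing with an orientation-reversing self-homeomorphism and/or reversing the $I$ factor) in a way compatible with the conventions fixed in \Cref{sec:homological-arrow-poly}, and that this is exactly the operation that swaps $A$- and $B$-smoothings. Once that is pinned down, the rest is the routine substitution argument above; I would also remark that the $G$-labeling $\lambda$ plays no role here since $h_{mL,\lambda} = -h_{L,\lambda}$ and the $\pm$ in the subscript absorbs the sign, so no hypothesis on $G$ beyond being abelian is needed.
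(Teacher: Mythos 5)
Your argument is correct, and it is essentially the intended one: the paper states this proposition without proof, treating it as the routine state-sum observation (as for the classical Kauffman bracket) that mirroring swaps $A$- and $B$-smoothings while leaving the state curves and their whisker/homology data unchanged up to the sign already absorbed by the $\pm$ in $X_{\pm g}$, and $(-A^2-A^{-2})^{b_0(S)}$ is symmetric under $A\mapsto A^{-1}$. One small clarification: under the usual convention where $mL$ is obtained by switching all classical crossings (reflecting through $\Sigma\times\{1/2\}$), the orientation of $\Sigma$ is \emph{not} reversed, so $h_{mL,\lambda}([C])=h_{L,\lambda}([C])$ on the nose; the sign negation you describe occurs only for the horizontal mirror (reflecting $\Sigma$ itself), but as you note the $\pm$ subscript makes the distinction immaterial, so the conclusion holds in either convention.
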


\subsection{Genus bounds}
\label{sec:genus-bounds}

\begin{figure}[tb]
  \centering
  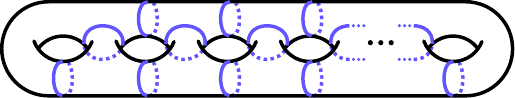
  \caption{For a surface $\Sigma$ of genus $g(\Sigma)\geq 2$, a maximal collection of nonseparating pairwise non-isotopic disjoint loops gives a pants decomposition, which always has $2(g(\Sigma)-1)$ pants.
    Each of the pants is incident to three curves, so there are $3(g(\Sigma)-1)$ curves in total. }
  \label{fig:high-genus-pants}
\end{figure}

For a $G$-labeled framed oriented surface link $L\subset\Sigma\times I$, the homological arrow polynomial $\mathcal{A}(L)$ consists of monomial terms of the form $p X_{\pm g_1}\cdots X_{\pm g_k}$ for nonzero $p\in\Z[A^{\pm 1}]$ and nonzero $g_i\in G$ for all $1\leq i\leq k$, allowing duplicate factors.
Each $X_{\pm g_i}$ comes from a loop in a state of the state sum, and since $g_i\neq 0$, it corresponds to a non-separating loop in $\Sigma$.

If $\pm g_i\neq \pm g_j$, then the loops that produced $g_i$ and $g_j$ must not be homologous and thus not be isotopic.
If the genus $g(\Sigma)$ is at most $1$, then the maximum number of nonseparating pairwise non-isotopic disjoint loops is $g(\Sigma)$.
For $g(\Sigma)\geq 2$, then a pants decomposition gives an upper bound of $3g(\Sigma)-3$ for the number of nonseparating pairwise non-isotopic disjoint loops (see \Cref{fig:high-genus-pants}).
Thus, we may generalize the \cite[Theorem 4.5]{Dye2008} genus bound to use the homological arrow polynomial in the following theorem.
\begin{theorem}
  \label{thm:genus-bound}
  Let $L\subset\Sigma\times I$ be a labeled framed oriented surface link with $\Sigma$ closed, and let $p X_{\pm g_1}\cdots X_{\pm g_k}$ be a term of $\mathcal{A}(L)$, where $p\in\Z[A^{\pm 1}]$ is nonzero and $g_i\in G$ is nonzero for all $1\leq i\leq k$.
  If $m$ is the number of pairwise distinct indices (that is, $m := \abs{ \{ \pm g_i : 1\leq i \leq k\}}$), then either
  \begin{enumerate}
  \item $g(\Sigma)\geq m$ if $g(\Sigma)\leq 1$, or
  \item $g(\Sigma) \geq \frac{1}{3}m+1$ if $g(\Sigma)\geq 2$.
  \end{enumerate}
\end{theorem}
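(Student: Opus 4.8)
The plan is to unwind the state-sum formula for $\mathcal{A}(L)$ and reduce the claim to the counting facts about simple closed curves on $\Sigma$ recorded just before the theorem. First I would fix a framed diagram $D$ for $L$ on $\Sigma$ and write
\[
  \mathcal{A}(L) = \sum_S A^{a(S)-b(S)}(-A^2-A^{-2})^{b_0(S)}\prod_{C\in S}X_{\pm h_{L,\lambda}([C])},
\]
where $S$ ranges over the Kauffman states of $D$, each of which is a simple closed multicurve on $\Sigma$, and $\lambda$ denotes the given $G$-labeling. Because $X_0 = 1$, the $X$-part of the contribution of a state $S$ is the monomial $\prod_{C\in S,\,h_{L,\lambda}([C])\neq 0} X_{\pm h_{L,\lambda}([C])}$, while its $A$-part is a Laurent polynomial in $A$. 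Grouping states by their $X$-monomial, the coefficient of $X_{\pm g_1}\cdots X_{\pm g_k}$ in $\mathcal{A}(L)$ is the sum of the $A$-parts over exactly those states whose $X$-monomial equals $X_{\pm g_1}\cdots X_{\pm g_k}$; since this coefficient $p$ is nonzero, such a state exists. Fix one and call it $S$.

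Next I would extract curves from $S$. Counting with multiplicity, the multiset of unoriented values $\pm h_{L,\lambda}([C])$ over the components $C$ of $S$ with $h_{L,\lambda}([C])\neq 0$ is exactly $\{\pm g_1,\dots,\pm g_k\}$, which has $m$ distinct elements, so I may choose components $C_1,\dots,C_m$ of $S$ realizing these $m$ values. They are pairwise disjoint, being components of a single multicurve. Each $C_i$ is nonseparating: $h_{L,\lambda}$ factors through $H_1(\Sigma)$, so $h_{L,\lambda}([C_i])\neq 0$ forces $[C_i]\neq 0$ in $H_1(\Sigma)$, and a separating simple closed curve is \nullhomologous{}. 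They are pairwise non-isotopic, since isotopic simple closed curves have equal unoriented homology classes and hence equal unoriented $h_{L,\lambda}$-values. Thus $\Sigma$ contains $m$ pairwise disjoint, pairwise non-isotopic, nonseparating simple closed curves.

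Finally I would feed this collection into the bounds recalled before the theorem: if $g(\Sigma)\leq 1$ then $\Sigma$ supports at most $g(\Sigma)$ such curves, so $g(\Sigma)\geq m$; and if $g(\Sigma)\geq 2$ then any such collection extends to a pants decomposition with $3g(\Sigma)-3$ curves, so $m \leq 3g(\Sigma)-3$, i.e., $g(\Sigma)\geq \frac{1}{3}m+1$. The point that needs the most care is the first step: extracting a single witnessing state $S$ is what guarantees that the $m$ curves are simultaneously realized and hence disjoint, so one must check that a nonzero coefficient for an $X$-monomial cannot arise purely from cancellation among states contributing different $X$-monomials. This is immediate, since each state contributes one fixed $X$-monomial and the $A$-variable is the only source of cancellation. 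Everything else is a direct translation of facts already recorded, so I anticipate no further difficulty.
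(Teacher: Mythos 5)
Your proof is correct and follows essentially the same route as the paper, which establishes the bound via exactly this observation: the nonzero $X$-variables of a term are realized by components of a single state, which are disjoint, nonseparating (nonzero class in $H_1(\Sigma)$), and pairwise non-isotopic when their unoriented indices differ, and then the $g(\Sigma)$ and $3g(\Sigma)-3$ curve-counting bounds apply. You additionally spell out the point the paper leaves implicit --- that a nonzero coefficient forces the existence of a single witnessing state, since each state contributes one fixed $X$-monomial --- which is a welcome clarification but not a different argument.
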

\begin{corollary}
  \label{thm:genus-bound-virt}
  Let $m$ be the number of pairwise distinct indices in the $X_{\pm g}$ variables of a term of the homological arrow polynomial of a labeled framed oriented virtual link $L$.
  If $m$ is $0$ or $1$, then $m$ is a lower bound for the virtual genus of $L$.
  Otherwise, if $m\geq 2$, then $\lceil\frac{1}{3}m+1\rceil$ gives a lower bound for the virtual genus of $L$.
\end{corollary}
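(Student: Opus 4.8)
The plan is to deduce the corollary from \Cref{thm:genus-bound} by passing to a minimal-genus closed surface representative. First I would invoke the definition of virtual genus: there exists a framed oriented surface link $L\subset\Sigma\times I$ representing the given virtual link with $\Sigma$ closed and $g(\Sigma)$ equal to the virtual genus $g_v$ of $L$. Since the homological arrow polynomial is an invariant of $G$-labeled framed virtual links, the term $pX_{\pm g_1}\cdots X_{\pm g_k}$ appearing in $\mathcal{A}(L)$ for the virtual link also appears in $\mathcal{A}(L)$ as computed from this surface representative, with the same number $m=\abs{\{\pm g_i:1\le i\le k\}}$ of pairwise distinct indices among the $\pm g_i$.

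Next I would apply \Cref{thm:genus-bound} to this representative, which yields the dichotomy that either $g_v\ge m$ (when $g_v\le 1$) or $g_v\ge\tfrac{1}{3}m+1$ (when $g_v\ge 2$). The remaining work is a short case analysis on $m$. For $m=0$ the asserted bound is just $g_v\ge 0$, which is automatic. For $m=1$: in the first case $g_v\ge 1=m$, and in the second case $g_v\ge 2>1=m$, so $g_v\ge m$ in either case. For $m\ge 2$: the first alternative is impossible, since it would force $g_v\ge m\ge 2$, contradicting the hypothesis $g_v\le 1$ of that case; hence we must have $g_v\ge\tfrac{1}{3}m+1$, and since $g_v$ is an integer this upgrades to $g_v\ge\lceil\tfrac{1}{3}m+1\rceil$, as desired.

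I do not expect a genuine obstacle here: essentially all of the content is already in \Cref{thm:genus-bound}, and the corollary is obtained by evaluating that bound at a minimal-genus representative and rounding up to the nearest integer. The only point requiring a word of justification is that the relevant monomial term of $\mathcal{A}(L)$ is independent of the chosen surface representative, which is immediate from the invariance of $\mathcal{A}$ under destabilization and under the induced action of orientation-preserving diffeomorphisms of $\Sigma$ established above; once that observation is in place, the rest is purely arithmetic.
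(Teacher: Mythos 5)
Your proposal is correct and matches the paper's (implicit) argument: the corollary is stated without proof precisely because it is the deduction you give, namely apply \Cref{thm:genus-bound} to a closed representative realizing the virtual genus (legitimate by the invariance of $\mathcal{A}$ under destabilization and surface diffeomorphisms) and then do the short case analysis on $m$ with integrality giving the ceiling. No gaps; the handling of the $m\geq 2$ case by ruling out the $g(\Sigma)\leq 1$ alternative is exactly the intended reading of the theorem's dichotomy.
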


\begin{remark}
  Given any collection of nonseparating pairwise non-isotopic disjoint loops in a closed oriented surface, there exists an immersed loop whose algebraic intersection number with each of these loops is nonzero and even.
  Hence, we do not expect the $3g-3$ bound to be improved except perhaps by considering the whole set of $m$ numbers for every term in the homological arrow polynomial.
\end{remark}

\subsubsection{Kishino's knot} 

\begin{figure}[tb]
  \centering
  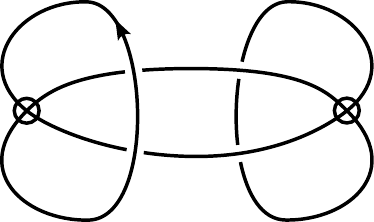
  \caption{The virtual knot 4.55, known as \emph{Kishino's knot}.}
  \label{fig:kishino}
\end{figure}

The virtual knot 4.55 (see \Cref{fig:kishino}) is known to have a trivial Jones polynomial and a non-trivial arrow polynomial:
\begin{align*}
  \langle K_{4.55}\rangle_{\mathrm{NA}} = A^4 + 1 + A^{-4} - (A^4 + 2 + A^{-4}) K_1^2 + 2 K_2.
\end{align*}
By \Cref{thm:genus-bound-virt}, we see the virtual genus of Kishino's knot is at least $1$ (hence is definitely not a classical knot).
The diagram for Kishino's knot can be drawn on a surface of genus $2$.
In \Cref{sec:checkerboard,sec:computational-investigations}, we discuss the $n$-cabled arrow polynomial, which is the arrow polynomial of an $n$-cabling of a given virtual link.
The $n$-cable of a virtual link has the same virtual genus.
We calculate the $2$-cabled arrow polynomial of Kishino's knot to be
\begin{equation*}
  \begin{gathered}
    -A^{-18} + A^{-14} + 7A^{-10} + 15A^{-6} + 19A^{-2} + 19 A^{2} + 15 A^{6} + 7 A^{10} + A^{14} - A^{18} \\
    {} + K_1^2 (-2A^{-14} - 8A^{-10} - 14A^{-6} - 16A^{-2} - 16 A^{2} - 14 A^{6} - 8 A^{10} - 2 A^{14}) \\
    {} + K_2^2 (A^{-18} - A^{-14} - 6A^{-10} - 12A^{-6} - 14A^{-2} - 14 A^{2} - 12 A^{6} - 6 A^{10} - A^{14} + A^{18}) \\
    {} + K_3^2 (-2A^{-10} - 4A^{-6} - 2A^{-2} - 2 A^{2} - 4 A^{6} - 2 A^{10})
     + K_4^2 (-2A^{-2} - 2 A^{2}) \\
    {} + K_1^2K_2(4A^{-14} + 22A^{-10} + 50A^{-6} + 68A^{-2} + 68 A^{2} + 50 A^{6} + 22 A^{10} + 4 A^{14}) \\
    {} + K_1 K_2 K_3 (2A^{-14} + 8A^{-10} + 20A^{-6} + 34A^{-2} + 34 A^{2} + 20 A^{6} + 8 A^{10} + 2 A^{14}) \\
    {} + K_2^2K_4(2A^{-10} + 6A^{-6} + 8A^{-2} + 8 A^{2} + 6 A^{6} + 2 A^{10}) \\
    {} + K_1^4(-A^{-14} - 7A^{-10} - 21A^{-6} - 35A^{-2} - 35 A^{2} - 21 A^{6} - 7 A^{10} - A^{14}) \\
    {} + K_1^2K_2^2(-2A^{-14} - 12A^{-10} - 32A^{-6} - 50A^{-2} - 50 A^{2} - 32 A^{6} - 12 A^{10} - 2 A^{14}) \\
    {} + K_2^4(-A^{-14} - 4A^{-10} - 8A^{-6} - 11A^{-2} - 11 A^{2} - 8 A^{6} - 4 A^{10} - A^{14})
    .
  \end{gathered}
\end{equation*}
Since $K_1K_2K_3$ appears, we get the virtual genus lower bound of $\frac{1}{3}\cdot 3+1=2$ for Kishino's knot.
Therefore the virtual genus is exactly $2$.
This fact was previously shown in \cite{Dye2004}, where they analyzed state curves of the Kauffman bracket expansion of Kishino's knot in a representative genus-$2$ surface.

\subsection{Crossing number bounds}
\label{sec:crossing-number}

The \emph{crossing number} $c(L)$ of a surface link or virtual link $L$ is the minimal number of crossings over all diagrams of the link.

The following proposition has not proved to be useful for determining the crossing number, since, except for the unknot, no virtual knot in Green's census gives an equality, and only $49$ out of $2565$ are one off from an equality.
\begin{proposition}
  Let $L$ be a surface link labeled from $\{1,\dots,n\}$, and let $p X_{\pm I_1}\cdots X_{\pm I_k}$ be a term of $\mathcal{A}(L)$, where $p\in\Z[A^{\pm 1}]$ and $I_i\in \Z^n$ nonzero for all $1\leq i\leq k$.
  With $s=\sum_{i=1}^k\sum_{j=1}^n\abs{I_{ij}}$, then $c(L)\geq \frac{1}{2}s$.
\end{proposition}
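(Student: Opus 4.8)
The plan is to run the whisker state sum for $\mathcal{A}(L)$ on a minimal diagram and to bound the total $X$-degree of a monomial crossing by crossing. First I would fix a diagram $D$ of $L$ realizing the crossing number, so $c(D)=c(L)$, choose a cellular embedding of $D$ in a surface, and compute $\mathcal{A}(L)$ by the whisker expansion used in the proof of \Cref{thm:h-evenness} together with the remark after it and \Cref{fig:whisker-expansion}: resolve each crossing by its two smoothings, where one smoothing introduces no whiskers and the other introduces a pair of whiskers labeled from $\{1,\dots,n\}$, then combine whiskers along each state loop $C$ so that $C$ carries a single whisker recording $h_{L,\lambda}([C])\in\Z^n$, whose $j$-th coordinate is the algebraic intersection number $\pi_*([L_j])\cdot[C]$ and, equivalently, a signed count of the $j$-labeled whiskers along $C$.

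Next I would locate a state producing the given monomial. Collecting like terms in
\[
  \mathcal{A}(L)=\sum_{S}A^{a(S)-b(S)}(-A^2-A^{-2})^{b_0(S)}\prod_{C\in S}X_{\pm h_{L,\lambda}([C])},
\]
and using that the $X_{\pm I}$ with $I\neq 0$ are algebraically independent, the coefficient $p$ of $X_{\pm I_1}\cdots X_{\pm I_k}$ is a sum over those states $S$ whose loops with nonzero label have $h_{L,\lambda}$-values forming the multiset $\{\pm I_1,\dots,\pm I_k\}$ (loops with $h_{L,\lambda}([C])=0$ contribute $X_0=1$). Since $p\neq 0$, at least one such state $S$ exists; fix it, with nonzero-label loops $C_1,\dots,C_k$ so that $h_{L,\lambda}([C_i])=\pm I_i$. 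The crucial count is that each crossing of $D$ contributes at most two whiskers to $S$ --- none if the orientation-compatible smoothing occurs there, two otherwise --- so $S$ has at most $2c(D)$ whiskers in all, counted with their labels.

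Finally I would assemble the estimate: for each $i$ and $j$, $\abs{I_{ij}}=\abs{h_{L,\lambda}([C_i])_j}$ is the absolute value of a signed count of $j$-labeled whiskers on $C_i$, hence at most the number of such whiskers; summing over $i$ and $j$ shows that $s=\sum_{i,j}\abs{I_{ij}}$ is bounded by the number of whiskers lying on $C_1\cup\dots\cup C_k$, which is at most the total whisker count of $S$, hence at most $2c(D)=2c(L)$. Dividing by two gives $c(L)\ge\frac{1}{2}s$. The one point that needs care is the ``at most two whiskers per crossing'' bookkeeping, i.e.\ checking that reducing whiskers along a loop creates no new whiskers and that the signs picked up when sliding a whisker across a strand are exactly the local intersection signs; but this is already the content of the whisker calculus set up for \Cref{thm:h-evenness} and \Cref{fig:whisker-expansion}, so it should only require a citation.
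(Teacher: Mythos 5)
Your argument is correct and is essentially the paper's own proof, which is the one-line observation that each crossing in the whisker state sum introduces (at most) two whiskers, so the total $X$-index weight of any state, and hence of any monomial, is bounded by $2c(L)$. Your write-up simply fills in the bookkeeping that the paper leaves implicit.
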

\begin{proof}
  Each crossing in the state sum introduces two whiskers.
\end{proof}

\begin{corollary}
  If $p K_{i_1}\cdots K_{i_k}$ is a term of the arrow polynomial $\langle L\rangle_{\mathrm{A}}$ of a virtual link $L$, where $p\in\Z[A^{\pm 1}]$ and $i_j\geq 1$ for all $1\leq j\leq k$, then $c(L)\geq \frac{1}{2}(i_1+\dots+i_k)$.
\end{corollary}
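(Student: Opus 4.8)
The plan is to deduce this corollary directly from the immediately preceding proposition about the crossing-number bound for $\mathcal{A}(L)$ together with \Cref{thm:arrow-poly-from-homol} relating the arrow polynomial to the homological arrow polynomial. First I would take $L$ with the $1$-labeling, so that $\mathcal{A}(L)$ lies in $R_1=\Z[A^{\pm1},X_0,X_1,\dots]/(X_0-1)$. By \Cref{thm:arrow-poly-from-homol}, $(-A^2-A^{-2})\langle L\rangle_{\mathrm{A}} = \mathcal{A}(L)|_{X_i = K_{i/2}}$, so the substitution $K_j \leftrightarrow X_{2j}$ matches monomials of the arrow polynomial (after multiplying by $-A^2-A^{-2}$) with monomials of $\mathcal{A}(L)$. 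A monomial $p\,K_{i_1}\cdots K_{i_k}$ with each $i_j\geq1$ corresponds to the monomial $p'\,X_{2i_1}\cdots X_{2i_k}$ in $\mathcal{A}(L)$, where $p' = (-A^2-A^{-2})p$ up to lower-order interference.

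The one subtlety is that distinct $\mathcal{A}(L)$-monomials can collapse onto the same arrow-polynomial monomial after the substitution $X_i=K_{i/2}$, since $K_j$ only records $|n|$ while the $X$ variables in $R_1$ are indexed by $\N$ (recall \Cref{thm:h-evenness} guarantees the relevant indices are even and \emph{nonnegative} in the $1$-labeled case — in fact in $R_1$ there is no sign ambiguity because the variables are $X_0,X_1,X_2,\dots$). So actually the substitution $X_i=K_{i/2}$ is essentially a relabeling with no collapsing, and the monomial $p'X_{2i_1}\cdots X_{2i_k}$ genuinely appears as a nonzero term in $\mathcal{A}(L)$ whenever $pK_{i_1}\cdots K_{i_k}$ is a nonzero term of $(-A^2-A^{-2})\langle L\rangle_{\mathrm{A}}$, hence whenever $pK_{i_1}\cdots K_{i_k}$ is a nonzero term of $\langle L\rangle_{\mathrm{A}}$ (multiplying by $-A^2-A^{-2}$ is injective on $\Z[A^{\pm1}]$, so it cannot kill a term). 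I would want to state this cleanly: a nonzero $K$-monomial of $\langle L\rangle_{\mathrm{A}}$ yields a corresponding nonzero $X$-monomial of $\mathcal{A}(L)$ with indices $I_j = 2i_j \in \Z^1$.

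Now I apply the preceding proposition with $n=1$. For the term $p'\,X_{I_1}\cdots X_{I_k}$ of $\mathcal{A}(L)$ with $I_j = (2i_j)\in\Z^1$, we have $s = \sum_{j=1}^k |I_{j1}| = \sum_{j=1}^k 2i_j$, and the proposition gives $c(L)\geq \tfrac12 s = i_1+\dots+i_k$. Wait — that's a factor of $2$ too strong; let me reconsider. The claimed bound is $c(L)\geq \tfrac12(i_1+\dots+i_k)$, which corresponds to $c(L)\geq\tfrac14 s$. So the honest route is: the $K_j$ index records \emph{half} the number of cusps (equivalently half the net whisker count), so $i_j = n(C_j)/2$ where $n(C_j)$ is the net whisker count on state loop $C_j$; each crossing contributes at most two whiskers total across all state loops, so $\sum_j n(C_j) \leq 2c(D)$ for the minimizing diagram $D$, giving $\sum_j 2i_j \leq 2c(L)$, i.e. $\sum_j i_j \leq c(L)$. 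Hmm, that still gives the stronger bound. Let me be careful: a whisker can be cancelled, so the \emph{net} count $n(C)$ can be strictly less than the raw whisker count, and the raw count is exactly $2c(D)$ summed over loops; thus $\sum n(C) \le 2c(D)$ is correct and gives $c(L) \ge \sum i_j$, which is stronger than claimed — so the stated bound certainly holds a fortiori. The cleanest writeup, then, is simply: apply the preceding proposition verbatim with $n=1$ to the $X$-monomial obtained from the given $K$-monomial via \Cref{thm:arrow-poly-from-homol}; since $I_j=(2i_j)$ gives $s = 2(i_1+\dots+i_k)$, the proposition yields $c(L) \geq \tfrac12 s = i_1+\dots+i_k \geq \tfrac12(i_1+\dots+i_k)$, which is the assertion.

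**The main obstacle.** The only real point requiring care is the translation step: verifying that a nonzero $K$-monomial of $\langle L\rangle_{\mathrm{A}}$ really does produce a nonzero $X$-monomial of $\mathcal{A}(L)$, i.e. that multiplication by $(-A^2-A^{-2})$ and the substitution $X_i = K_{i/2}$ together set up a bijection on monomials that preserves nonvanishing of coefficients. This is immediate once one observes that in the $1$-labeled case the indices are nonnegative even integers with no sign ambiguity, so $X_i = K_{i/2}$ is just a renaming of variables and introduces no cancellation; and multiplication by $-A^2-A^{-2}$ in $\Z[A^{\pm1}]$ is injective, so no coefficient becomes zero. After that, invoking the previous proposition is mechanical.
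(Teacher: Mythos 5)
Your proposal is correct and takes essentially the paper's (implicit) route: translate the given $K$-monomial into the corresponding $X$-monomial of $\mathcal{A}(L)$ via \Cref{thm:arrow-poly-from-homol} (in the $1$-labeled case the substitution $X_{2j}\leftrightarrow K_j$ is a bijective renaming with no collapsing, and multiplication by $-A^2-A^{-2}$ is injective over $\Z[A^{\pm 1}]$, so nonzero terms correspond), and then apply the preceding proposition with $n=1$. Your handling of the factor of two is also right: since the indices double ($I_j=2i_j$), the proposition actually yields the stronger bound $c(L)\geq i_1+\cdots+i_k$, of which the stated inequality $c(L)\geq\tfrac{1}{2}(i_1+\cdots+i_k)$ is a weakening.
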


\subsection{Nullhomologous virtual links}

\begin{definition}
  For $R$ a commutative ring, an oriented surface link $L\subset\Sigma\times I$ is \emph{$R$-nullhomologous} if $[L]=0$ in $H_1(\Sigma\times I;R)$.
  An oriented virtual link is \emph{$R$-nullhomologous} if it has a representative surface link that is $R$-\nullhomologous{}.
  We say a surface link or virtual link is \emph{nullhomologous} (or \emph{almost classical}) if it is $\Z$-\nullhomologous{}.
\end{definition}

An important part of the theory of virtual links is Kuperberg's characterization, \Cref{thm:what-is-virtual-link}.
For a thickened surface $\Sigma\times I$, a \emph{vertical annulus} $A\subset\Sigma\times I$ is a properly embedded annulus that is isotopic to $C\times I$ for some simple closed curve $C\subset\Sigma$, and, for a given surface link $L\subset\Sigma\times I$, such an annulus disjoint from $L$ is called \emph{essential} if it does not bound a ball in $\Sigma\times I-L$.
For a surface link $L\subset\Sigma\times I$ and a vertical annulus in the complement of $A$, there is a virtually equivalent surface link called the \emph{destabilization of $L\subset\Sigma\times I$ along $A$}, described as follows.
After an ambient isotopy we may assume $A=C\times I$, and, with $\nu(C)$ being a tubular neighborhood of $C$ in $\Sigma$ such that $\nu(C)\times I$ is disjoint from $L$, then $L\subset (\Sigma-\nu(C))\times I$ is a destabilization of $L\subset\Sigma\times I$.
Capping off each component of $\partial(\Sigma-\nu(C))$ with a disk yields a closed surface $\Sigma'$, and we call $L\subset\Sigma'\times I$ the \emph{closed destabilization} of $L\subset\Sigma\times I$ along $A$.

A \emph{spanning} surface link $L\subset\Sigma\times I$ is one for which $L$ meets each component of $\Sigma\times I$.
\begin{theorem}[{\hspace{1sp}\cite{Kuperberg2003}}]
  \label{thm:what-is-virtual-link}
  Given a virtual link $L$, there is a representative spanning surface link $L\subset\Sigma\times I$ with $\Sigma$ closed such that every vertical annulus in $\Sigma\times I-L$ bounds a ball disjoint from $L$.
  This surface link is unique up to weak equivalence (the equivalence relation generated by isotopy and transformations induced by orientation-preserving self-diffeomorphisms of $\Sigma$).
\end{theorem}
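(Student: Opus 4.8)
The plan is to prove existence and uniqueness separately, the latter being the substantive part.

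For \emph{existence}, begin with any representative surface link $L\subset\Sigma_0\times I$ with $\Sigma_0$ closed and delete the components of $\Sigma_0$ that $L$ misses; since $L$ is nonempty this leaves a spanning representative. If some vertical annulus $A\subset\Sigma_0\times I-L$ is essential (fails to bound a ball disjoint from $L$), replace $L\subset\Sigma_0\times I$ by its closed destabilization along $A$ and again delete the components disjoint from $L$. A closed destabilization along an essential vertical annulus strictly decreases the complexity given by $-\chi$ of the union of the components of $\Sigma$ meeting $L$: it drops by at least $2$ at each step (here one uses that an essential vertical annulus cannot cut off a disk region disjoint from $L$, so a discarded component is never a capped-off disk), while this quantity is bounded below by $-2\abs{L}$. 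Hence the process terminates at a spanning representative in which every vertical annulus disjoint from $L$ bounds a ball disjoint from $L$.

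For \emph{uniqueness}, organize the above into a rewriting system whose objects are weak-equivalence classes of spanning surface links with closed ambient surface and whose one-step moves are closed destabilizations along essential vertical annuli. The existence argument shows the system is terminating, and its normal forms are precisely the irreducible representatives of the theorem. Since virtual equivalence is the equivalence relation generated by weak equivalence together with these moves, two irreducible representatives of the same virtual link are connected by a chain of these moves and their inverses, so by Newman's lemma it suffices to prove \emph{local confluence}: whenever a representative $R$ admits one-step destabilizations to $R_1$ and to $R_2$, the classes $R_1$ and $R_2$ can be further destabilized to a common class. Granting this, the two irreducible representatives share a unique normal form and are therefore weakly equivalent.

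The geometric heart, and the step I expect to be the main obstacle, is local confluence. Let $A_1,A_2\subset\Sigma\times I-L$ be the essential vertical annuli used to form $R_1,R_2$, with cores $C_1,C_2\subset\Sigma$. If $C_1$ and $C_2$ can be isotoped rel $L$ to be disjoint, then after destabilizing along $A_1$ the annulus $A_2$ persists as a vertical annulus disjoint from $L$: if it is still essential the destabilizations commute and we are done, and if it has become inessential one argues that the $A_1$-destabilization already realizes at least as much simplification as the $A_2$-destabilization, so both sides still reach weakly equivalent irreducible representatives. The genuinely hard case is when $C_1$ and $C_2$ have essential intersection in $\Sigma$: destabilizing along $A_1$ then cuts $A_2$ into vertical rectangles, so it no longer presents a vertical annulus, and one must instead use the incompressibility of essential vertical annuli in the irreducible complement $\Sigma\times I-L$ to reduce $\abs{C_1\cap C_2}$ by innermost-disk surgeries, or argue directly that essential intersection of the cores is incompatible with both annuli being destabilizable. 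This is exactly the point handled by Kuperberg's original $3$-manifold argument, and the plan is to follow that line; the remainder is bookkeeping in the rewriting system.
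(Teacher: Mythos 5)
Your proposal follows essentially the same route as the paper's own proof sketch: both reduce uniqueness to a confluence statement for destabilizations along essential vertical annuli, established by well-founded induction on a complexity that closed destabilization strictly decreases (your $-\chi$ of the spanning components is the paper's $g(\Sigma)-b_0(\Sigma)+b_0(L)$ up to an affine change), and both leave the hard essential-intersection analysis to Kuperberg's argument, which is also all the paper does, since the theorem is quoted from \cite{Kuperberg2003} and given only a sketch. One caveat about the step you yourself identify as the obstacle: the fallback you suggest, that essential intersection of the cores is incompatible with both annuli being essential, is false --- for a small unknot contained in a disk in $T^2$, the vertical annuli over a meridian and a longitude are both essential and their cores meet in one point --- and when the cores intersect transversely the two vertical annuli meet in vertical arcs rather than circles, so the reduction is an outermost-arc/boundary-compression analysis rather than innermost-disk surgery (and $\Sigma\times I-L$ need not be irreducible). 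Hence only the branch in which you genuinely carry out Kuperberg's intersection analysis can close the argument, exactly the point the paper's sketch also defers; you should also record the (routine) fact that any destabilization to a compact subsurface decomposes into closed destabilizations along the finitely many boundary annuli followed by pruning, so that your single-annulus moves together with weak equivalence do generate virtual equivalence of closed spanning representatives.
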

\begin{proof}[Proof sketch]
  Given a disjoint union $\mathcal{A}$ of vertical annuli in $\Sigma\times I-L$, let the \emph{pruned destabilization along $\mathcal{A}$ of $L\subset\Sigma\times I$} be the result of taking the closed destabilization of $L\subset\Sigma\times I$ along all the annuli in $\mathcal{A}$ and then throwing away every component of the resulting thickened surface that does not meet $L$, yielding a spanning surface link that is virtually equivalent to $L\subset\Sigma\times L$.
  An \emph{irreducible descendant} of $L\subset\Sigma\times I$ is a pruned destabilization where every vertical annulus in the complement of $L$ bounds a ball disjoint from $L$.
  If there were a spanning surface link with multiple non-diffeomorphic irreducible descendants, we could let $L\subset\Sigma\times I$ be one with $g(\Sigma)-b_0(\Sigma)+b_0(L)\in\N$ minimal.
  Thus, the closed destabilization along any essential vertical annulus for this surface link has a unique irreducible descendant.
  
  The way the proof proceeds is to suppose there are nonempty disjoint unions $\mathcal{A}_1$ and $\mathcal{A}_2$ of essential vertical annuli in $\Sigma\times I$ whose pruned destabilizations have non-diffeomorphic irreducible descendants that, when put in general position, intersect in the fewest number of curves.
  The intersection is analyzed to show that either $g(\Sigma)-b_0(\Sigma)+b_0(L)$ was not minimal or that either $\mathcal{A}_1$ or $\mathcal{A}_2$ can be modified to have fewer components in the intersection without changing the diffeomorphism class of its irreducible descendent.

  Therefore, there is a unique irreducible descendant for every surface link and hence for each virtual equivalence class.
\end{proof}

We aim to show that the property of a surface link being $R$-\nullhomologous{} is preserved under closed destabilizations, thereby proving that it is reasonably well-behaved as a property of virtual links.
For this purpose, we give a diagrammatic characterization of a surface link being $R$-\nullhomologous{}, which is the existence of what we call an $R$ Dehn numbering.
This is the surface link version of the numberings used by Alexander in \cite{Alexander1928} with $R=\Z$, which there represented the abelianization of the Dehn presentation of the knot group.
Note that for surface link diagrams that are \emph{cellular embeddings}, where each complementary region is a disk, then there is a one-to-one correspondence between Dehn numberings and what are known as Alexander numberings (see, for example, \cite{Boden2017}).
The difference is that in a Dehn numbering the regions are assigned elements of $R$ whereas in an Alexander numbering the diagram's edges are.

\begin{definition}
  \label{def:dehn-numbering}
  Suppose $R$ is commutative (unital) ring, $L\subset \Sigma\times I$ is an oriented surface link with $\Sigma$ closed, and $D\subset \Sigma$ is a diagram for $L$.
  Letting $\mathcal{R}$ denote the regions in the complement of $D$, we say a function $f:\mathcal{R}\to R$ is an \emph{$R$ Dehn numbering} for $D$ if for every edge in $D$, with $A$ being the region to the right of the edge (from the point of view of someone traveling along the edge according to the orientation) and $B$ being the region to the left of the edge like so
  \begin{equation*}
    B \raisebox{-1.3em}{\marginbox{8pt 0pt}{
\begingroup%
  \makeatletter%
  \providecommand\color[2][]{%
    \errmessage{(Inkscape) Color is used for the text in Inkscape, but the package 'color.sty' is not loaded}%
    \renewcommand\color[2][]{}%
  }%
  \providecommand\transparent[1]{%
    \errmessage{(Inkscape) Transparency is used (non-zero) for the text in Inkscape, but the package 'transparent.sty' is not loaded}%
    \renewcommand\transparent[1]{}%
  }%
  \providecommand\rotatebox[2]{#2}%
  \newcommand*\fsize{\dimexpr\f@size pt\relax}%
  \newcommand*\lineheight[1]{\fontsize{\fsize}{#1\fsize}\selectfont}%
  \ifx\svgwidth\undefined%
    \setlength{\unitlength}{5.59998772bp}%
    \ifx\svgscale\undefined%
      \relax%
    \else%
      \setlength{\unitlength}{\unitlength * \real{\svgscale}}%
    \fi%
  \else%
    \setlength{\unitlength}{\svgwidth}%
  \fi%
  \global\let\svgwidth\undefined%
  \global\let\svgscale\undefined%
  \makeatother%
  \begin{picture}(1,6.86113171)%
    \lineheight{1}%
    \setlength\tabcolsep{0pt}%
    \put(0,0){\includegraphics[width=\unitlength,page=1]{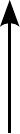}}%
  \end{picture}%
\endgroup%
}} A
  \end{equation*}
  then $f(B)=f(A)+1$, where $1\in R$ is the unit element.
  (See \Cref{fig:4n105,fig:vlink1} for examples.)
\end{definition}

\begin{figure}[tb]
  \centering
\begingroup%
  \makeatletter%
  \providecommand\color[2][]{%
    \errmessage{(Inkscape) Color is used for the text in Inkscape, but the package 'color.sty' is not loaded}%
    \renewcommand\color[2][]{}%
  }%
  \providecommand\transparent[1]{%
    \errmessage{(Inkscape) Transparency is used (non-zero) for the text in Inkscape, but the package 'transparent.sty' is not loaded}%
    \renewcommand\transparent[1]{}%
  }%
  \providecommand\rotatebox[2]{#2}%
  \newcommand*\fsize{\dimexpr\f@size pt\relax}%
  \newcommand*\lineheight[1]{\fontsize{\fsize}{#1\fsize}\selectfont}%
  \ifx\svgwidth\undefined%
    \setlength{\unitlength}{115.65088019bp}%
    \ifx\svgscale\undefined%
      \relax%
    \else%
      \setlength{\unitlength}{\unitlength * \real{\svgscale}}%
    \fi%
  \else%
    \setlength{\unitlength}{\svgwidth}%
  \fi%
  \global\let\svgwidth\undefined%
  \global\let\svgscale\undefined%
  \makeatother%
  \begin{picture}(1,0.99942094)%
    \lineheight{1}%
    \setlength\tabcolsep{0pt}%
    \put(0,0){\includegraphics[width=\unitlength,page=1]{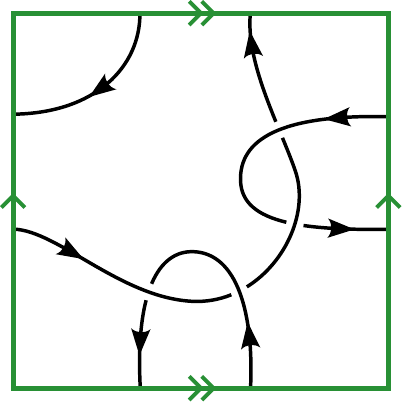}}%
    \put(0.78199942,0.16299552){\makebox(0,0)[lt]{\lineheight{1.25}\smash{\begin{tabular}[t]{l}$0$\end{tabular}}}}%
    \put(0.13469865,0.14491157){\makebox(0,0)[lt]{\lineheight{1.25}\smash{\begin{tabular}[t]{l}$0$\end{tabular}}}}%
    \put(0.11930561,0.84411446){\makebox(0,0)[lt]{\lineheight{1.25}\smash{\begin{tabular}[t]{l}$0$\end{tabular}}}}%
    \put(0.76856624,0.8181743){\makebox(0,0)[lt]{\lineheight{1.25}\smash{\begin{tabular}[t]{l}$0$\end{tabular}}}}%
    \put(0.37191208,0.58321589){\makebox(0,0)[lt]{\lineheight{1.25}\smash{\begin{tabular}[t]{l}$1$\end{tabular}}}}%
    \put(0.80314792,0.54569561){\makebox(0,0)[lt]{\lineheight{1.25}\smash{\begin{tabular}[t]{l}$1$\end{tabular}}}}%
    \put(0.64831234,0.52427147){\makebox(0,0)[lt]{\lineheight{1.25}\smash{\begin{tabular}[t]{l}$2$\end{tabular}}}}%
    \put(0.45290801,0.28310179){\makebox(0,0)[lt]{\lineheight{1.25}\smash{\begin{tabular}[t]{l}$2$\end{tabular}}}}%
    \put(0.45454117,0.12524336){\makebox(0,0)[lt]{\lineheight{1.25}\smash{\begin{tabular}[t]{l}$1$\end{tabular}}}}%
  \end{picture}%
\endgroup%

  \caption{The nullhomologous virtual knot 4.105, drawn on a torus with a $\Z$ Dehn numbering.}
  \label{fig:4n105}
\end{figure}

\begin{figure}[tb]
  \centering
\begingroup%
  \makeatletter%
  \providecommand\color[2][]{%
    \errmessage{(Inkscape) Color is used for the text in Inkscape, but the package 'color.sty' is not loaded}%
    \renewcommand\color[2][]{}%
  }%
  \providecommand\transparent[1]{%
    \errmessage{(Inkscape) Transparency is used (non-zero) for the text in Inkscape, but the package 'transparent.sty' is not loaded}%
    \renewcommand\transparent[1]{}%
  }%
  \providecommand\rotatebox[2]{#2}%
  \newcommand*\fsize{\dimexpr\f@size pt\relax}%
  \newcommand*\lineheight[1]{\fontsize{\fsize}{#1\fsize}\selectfont}%
  \ifx\svgwidth\undefined%
    \setlength{\unitlength}{115.65088019bp}%
    \ifx\svgscale\undefined%
      \relax%
    \else%
      \setlength{\unitlength}{\unitlength * \real{\svgscale}}%
    \fi%
  \else%
    \setlength{\unitlength}{\svgwidth}%
  \fi%
  \global\let\svgwidth\undefined%
  \global\let\svgscale\undefined%
  \makeatother%
  \begin{picture}(1,0.99942094)%
    \lineheight{1}%
    \setlength\tabcolsep{0pt}%
    \put(0,0){\includegraphics[width=\unitlength,page=1]{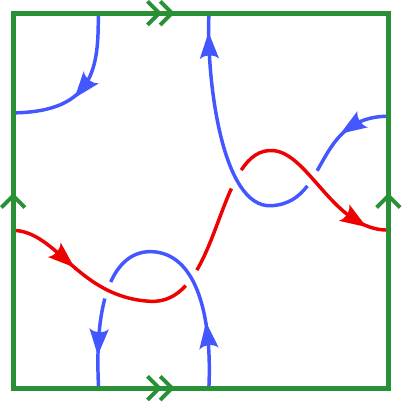}}%
    \put(0.67985369,0.7985937){\makebox(0,0)[lt]{\lineheight{1.25}\smash{\begin{tabular}[t]{l}$0$\end{tabular}}}}%
    \put(0.70049635,0.1910168){\makebox(0,0)[lt]{\lineheight{1.25}\smash{\begin{tabular}[t]{l}$0$\end{tabular}}}}%
    \put(0.09979598,0.83539609){\makebox(0,0)[lt]{\lineheight{1.25}\smash{\begin{tabular}[t]{l}$0$\end{tabular}}}}%
    \put(0.0936697,0.13315278){\makebox(0,0)[lt]{\lineheight{1.25}\smash{\begin{tabular}[t]{l}$0$\end{tabular}}}}%
    \put(0.28245126,0.556567){\makebox(0,0)[lt]{\lineheight{1.25}\smash{\begin{tabular}[t]{l}$1$\end{tabular}}}}%
    \put(0.86656984,0.54396418){\makebox(0,0)[lt]{\lineheight{1.25}\smash{\begin{tabular}[t]{l}$1$\end{tabular}}}}%
    \put(0.62272373,0.52805138){\makebox(0,0)[lt]{\lineheight{1.25}\smash{\begin{tabular}[t]{l}$-1$\end{tabular}}}}%
    \put(0.34495876,0.28369308){\makebox(0,0)[lt]{\lineheight{1.25}\smash{\begin{tabular}[t]{l}$2$\end{tabular}}}}%
    \put(0.34391988,0.1164731){\makebox(0,0)[lt]{\lineheight{1.25}\smash{\begin{tabular}[t]{l}$1$\end{tabular}}}}%
  \end{picture}%
\endgroup%

  \caption{A two-component nullhomologous virtual link drawn on a torus with a $\Z$ Dehn numbering.
  The components are colored red and blue.}
  \label{fig:vlink1}
\end{figure}

\begin{lemma}
  \label{thm:dehn-numbering}
  Suppose $R$ is a commutative ring, $L\subset \Sigma\times I$ is an oriented surface link with $\Sigma$ closed, and $D\subset \Sigma$ is a diagram for $L$.
  The link $L$ is $R$-\nullhomologous{} if and only if there exists an $R$ Dehn numbering for $D$.
\end{lemma}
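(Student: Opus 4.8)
The plan is to reinterpret an $R$ Dehn numbering of $D$ as a relative $2$-chain on the pair $(\Sigma, D)$ whose boundary is the fundamental cycle of $D$, and then to read off the equivalence from the long exact sequence of $(\Sigma, D)$ together with the fact that the projection $\pi\colon\Sigma\times I\to\Sigma$ is a homotopy equivalence carrying $[L]$ to the class of $D$. (Dually, a Dehn numbering is a primitive for the closed ``$1$-form'' Poincar\'e dual to $[L]$, so one expects existence to be controlled by the vanishing of $[L]$; the argument below makes this precise without appealing to Poincar\'e--Lefschetz duality over a general ring.)

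First I would fix a triangulation of $\Sigma$ in which $D$ is a subcomplex and orient all its $2$-simplices coherently with the orientation of $\Sigma$. Let $c_D\in C_1(D;\Z)$ be the sum of the edges of $D$, each with the orientation it inherits from $L$; this is a cycle because the orientations of $D$ match up at every vertex, and since $\pi$ restricts to a map $L\to D$ whose class in $H_1(D;R)$ is $[c_D]$, the image of $[c_D]$ under $i_*\colon H_1(D;R)\to H_1(\Sigma;R)$ is $\pi_*[L]$. For each region $\rho\in\mathcal R$, let $\sigma_\rho\in C_2(\Sigma;R)$ be the sum of the $2$-simplices lying in $\rho$.

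Next I would record two elementary facts. (i) Computing $\partial\sigma_\rho$ (interior edges cancel, and an edge $e$ of $D$ occurs with coefficient $+1$ if $\rho$ lies to its left and $-1$ if $\rho$ lies to its right), one finds that for any $f\colon\mathcal R\to R$ the chain $c_f\defeq\sum_{\rho}f(\rho)\,\sigma_\rho$ satisfies
\[
  \partial c_f \;=\; \sum_{e}\bigl(f(B_e)-f(A_e)\bigr)\,e,
\]
where $A_e$ and $B_e$ are the regions to the right and left of $e$. Hence $f$ is an $R$ Dehn numbering of $D$ exactly when $\partial c_f=c_D$. (ii) A $2$-chain has boundary supported on $D$ if and only if its coefficient function is constant on each region (two $2$-simplices sharing an edge not in $D$ receive equal coefficients), so $Z_2(\Sigma,D;R)$ is the free $R$-module on $\{\sigma_\rho\}_{\rho\in\mathcal R}$ and every relative $2$-cycle is some $c_f$. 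Together, (i) and (ii) say that an $R$ Dehn numbering of $D$ exists if and only if $c_D\in\partial\bigl(Z_2(\Sigma,D;R)\bigr)$.

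Finally I would finish homologically. A surface has no $3$-cells, so $H_2(\Sigma,D;R)=Z_2(\Sigma,D;R)$; and $D$ is a graph, so $B_1(D;R)=0$ and $Z_1(D;R)=H_1(D;R)$. Under these identifications the connecting map $\partial_*\colon H_2(\Sigma,D;R)\to H_1(D;R)$ is literally $c\mapsto\partial c$, so $c_D\in\partial\bigl(Z_2(\Sigma,D;R)\bigr)$ iff $[c_D]\in\im\partial_*$, which by exactness of the sequence of the pair equals $\ker\bigl(i_*\colon H_1(D;R)\to H_1(\Sigma;R)\bigr)$. Thus a Dehn numbering exists iff $i_*[c_D]=0$, i.e.\ iff $\pi_*[L]=0$ in $H_1(\Sigma;R)$, i.e.\ (since $\pi_*$ is an isomorphism) iff $[L]=0$ in $H_1(\Sigma\times I;R)$. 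The only step that is not purely formal is (ii), the identification of relative $2$-cycles over an arbitrary coefficient ring with region-constant chains; this is precisely why I work with an explicit triangulation rather than invoking Poincar\'e--Lefschetz duality directly.
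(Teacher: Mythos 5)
Your proof is correct and takes essentially the same approach as the paper: triangulate $\Sigma$ with $D$ as a subcomplex, identify $R$ Dehn numberings with $2$-chains that are constant on regions and whose boundary is the fundamental $1$-cycle of $D$, and conclude that such a chain exists iff $\pi_*[L]=0$ in $H_1(\Sigma;R)$. The only (harmless) difference is that you route the last step through the long exact sequence of the pair $(\Sigma,D)$, whereas the paper argues directly that $[\alpha]=0$ iff $\alpha=\partial\beta$ for some $2$-chain $\beta$ --- a detour that is valid but unnecessary, since any $2$-chain bounding $c_D$ automatically lies in $Z_2(\Sigma,D;R)$.
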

\begin{proof}
  For the following we use cellular homology.
  Give $\Sigma$ a CW complex structure from a triangulation of $\Sigma$ such that $D$ is a subcomplex, and arrange for the orientations of each of the $2$-cells to coincide with the orientation of $\Sigma$ and for the orientations of each $1$-cell in $D$ to coincide with the orientations of the diagram's edges.
  Let $\alpha\in C_1(\Sigma;R)$ be the $1$-chain that's the sum of all the $1$-cells in $D$.
  Evidently, if $\pi:\Sigma\times I\to \Sigma$ is the standard projection, then $\pi_*([L])=[\alpha]$ in $H_1(\Sigma;R)$.
  Since $\pi$ is a homotopy equivalence, then this means $L$ being $R$-\nullhomologous{} is equivalent to $[\alpha]=0$.
  By definition, $[\alpha]=0$ if there is some $2$-chain $\beta\in C_2(\Sigma;R)$ whose boundary is $\alpha$.
  Since every $1$-cell is contained in the boundary of exactly two $2$-cells, such a $\beta$ is an $R$-valued function on $2$-cells characterized by two properties: (1) whenever two $2$-cells share a $1$-cell outside of $D$, then they take on the same value and (2) whenever two $2$-cells {$A$ and $B$} share a $1$-cell from $D$, then, with $A$ and $B$ arranged with respect to the orientation of $D$ as in \Cref{def:dehn-numbering}, we have $\beta(B) = \beta(A) + 1$.
  Hence, the data for such a $\beta$ is equivalent to the data of an $R$ Dehn numbering.
\end{proof}

\begin{lemma}
  \label{thm:closed-destab-once}
  Suppose $R$ is a commutative ring, $L\subset \Sigma\times I$ is an $R$-\nullhomologous{} oriented surface link with $\Sigma$ closed, and $A\subset\Sigma\times I-L$ is a vertical annulus.
  Then the closed destabilization of $L\subset\Sigma\times I$ along $A$ is $R$-\nullhomologous{}.
\end{lemma}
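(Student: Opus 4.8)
The plan is to use the diagrammatic criterion of \Cref{thm:dehn-numbering}, which reduces the claim to producing an $R$ Dehn numbering for a diagram of the closed destabilization. The main point is that a single diagram can be made to serve both for $L\subset\Sigma\times I$ and for its closed destabilization, and that an $R$ Dehn numbering transfers across the surgery essentially unchanged.

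First I would fix the diagram. By the description of destabilization along $A$, after an ambient isotopy we may write $A=C\times I$ with an annular neighborhood $\nu(C)\subset\Sigma$ of $C$ such that $\nu(C)\times I$ is disjoint from $L$; then $L\subset(\Sigma-\nu(C))\times I$, so $L$ has a diagram $D$ with $D\subset\Sigma-\nu(C)$. The closed surface $\Sigma'$ of the closed destabilization is obtained from $\Sigma$ by cutting along $C$ and capping the resulting one or two boundary circles with disks; since $\Sigma-\nu(C)$ sits inside both $\Sigma$ and $\Sigma'$, the same $D$ is a diagram for $L\subset\Sigma\times I$ and for the closed destabilization $L\subset\Sigma'\times I$.

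Next I would compare regions. As $D$ is disjoint from the connected set $\nu(C)$, the curve $C$ lies in a single region $\mathcal{R}_0$ of the complement $\Sigma-D$, and the surgery producing $\Sigma'$ alters $\Sigma$ only within $\mathcal{R}_0$. Consequently the complementary regions of $D$ in $\Sigma'$ are precisely those of $D$ in $\Sigma$ other than $\mathcal{R}_0$, together with the one or two regions $\mathcal{R}_0^{(j)}$ into which $\mathcal{R}_0$ is cut, and along every edge of $D$ the two adjacent regions are unchanged except that an adjacency with $\mathcal{R}_0$ becomes an adjacency with one of its pieces. Now \Cref{thm:dehn-numbering} applied to $L\subset\Sigma\times I$, which is $R$-\nullhomologous{} by hypothesis, furnishes an $R$ Dehn numbering $f$ for $D$ in $\Sigma$. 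Define $f'$ on the regions of $\Sigma'-D$ by letting $f'=f$ on the regions inherited from $\Sigma$ and $f'(\mathcal{R}_0^{(j)})=f(\mathcal{R}_0)$ on the pieces of $\mathcal{R}_0$. Reading the defining relation of \Cref{def:dehn-numbering} edge by edge, the left- and right-hand region values for $f'$ coincide with those for $f$, so $f'$ satisfies it. Thus $D$ carries an $R$ Dehn numbering as a diagram in $\Sigma'$, and \Cref{thm:dehn-numbering} shows that the closed destabilization $L\subset\Sigma'\times I$ is $R$-\nullhomologous{}.

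The only step requiring care is the region bookkeeping: verifying that the one or two pieces of $\mathcal{R}_0$ may all be assigned the single value $f(\mathcal{R}_0)$ without conflict. This works because \Cref{def:dehn-numbering} is a purely local condition across individual edges and imposes no constraint linking regions through the interior of $\mathcal{R}_0$, which is exactly the part of $\Sigma$ affected by the surgery.
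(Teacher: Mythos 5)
Your proposal is correct and follows essentially the same route as the paper: invoke the Dehn-numbering characterization (\Cref{thm:dehn-numbering}), observe that $C$ lies entirely in one complementary region of the diagram, and transfer the numbering to $\Sigma'$ by giving the piece(s) of that region the same value, the condition being purely local along edges. Your extra care about whether the affected region is cut into one or two pieces is the same bookkeeping the paper handles by assigning both capped-off regions the original value $r$.
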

\begin{proof}
  After an isotopy, we may assume that there is a simple closed curve $C\subset \Sigma$ such that $A=C\times I$.
  Fixing a neighborhood of $C$, there is another isotopy such that the projection onto $\Sigma$ yields a diagram for $L$, and note that $C$ is in the complement of this diagram.
  Since $L$ is $R$-\nullhomologous{}, there is an $R$ Dehn numbering for the diagram.
  The curve $C$ is entirely contained in some region outside the diagram; let $r\in R$ be this region's value.
  If we remove $\nu(C)$ from $\Sigma$ and glue in two disks, we can obtain an $R$ Dehn numbering for the diagram on the resulting surface by giving both of the disk's regions the value $r$ and using the original Dehn numbering for all the unalterned regions.
  Hence $L$ is $R$-\nullhomologous{} in the closed destabilization along $A$.
\end{proof}

\begin{proposition}
  \label{thm:nullhom-minimal}
  Suppose $L\subset\Sigma\times I$ is a spanning oriented surface link with $\Sigma$ closed.
  If $L$ as a virtual link is $R$-\nullhomologous{} and if this surface link realizes its virtual genus, then $L$ as a surface link is $R$-\nullhomologous{}.
\end{proposition}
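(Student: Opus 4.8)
The plan is to deduce the statement from Kuperberg's characterization (\Cref{thm:what-is-virtual-link}) together with the preservation of the $R$-\nullhomologous{} property under closed destabilization (\Cref{thm:closed-destab-once}). The point is that the genus hypothesis forces the relevant destabilizations to be \emph{genus preserving}, and a genus-preserving closed destabilization not only preserves but also \emph{reflects} the $R$-\nullhomologous{} property. Concretely: by \Cref{thm:what-is-virtual-link} and the pruned-destabilization process in its proof, there is a finite sequence of closed destabilizations and weak equivalences carrying $L\subset\Sigma\times I$ down to the irreducible representative $L_0\subset\Sigma_0\times I$ of the virtual link. Along such a sequence the genus is non-increasing; it starts at $g(\Sigma)$, which is the virtual genus by hypothesis, and ends at $g(\Sigma_0)$, which is at least the virtual genus since $\Sigma_0$ is closed. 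Hence $g(\Sigma)=g(\Sigma_0)$ and every closed destabilization in the sequence is genus preserving. (If $g(\Sigma)=0$ the statement is immediate, as then $H_1(\Sigma\times I;R)=0$.)

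Next I would analyze a single genus-preserving closed destabilization of an oriented surface link $N\subset\Theta\times I$ along a vertical annulus $C\times I$. Since a nonseparating $C$ would strictly lower the genus, $C$ must be separating; write $\Theta\setminus\nu(C)=\Theta_1'\sqcup\Theta_2'$, and, using that $N$ is disjoint from $C\times I$, write $N=N_1\sqcup N_2$ with $N_i\subset\Theta_i'\times I$. The destabilization caps off each $\Theta_i'$ to a closed surface $\Theta_i$ and discards those disjoint from $N$. If some $N_i$ is empty, genus preservation forces $\Theta_i$ to be a sphere, so $C$ bounds a disk disjoint from $N$ and the move is, up to weak equivalence, the identity. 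Otherwise both pieces survive, and a Mayer--Vietoris computation over $R$ (using that $C$ is nullhomologous in each $\Theta_i'$) yields a natural isomorphism $H_1(\Theta\times I;R)\cong H_1(\Theta_1\times I;R)\oplus H_1(\Theta_2\times I;R)$ under which $[N]$ corresponds to $([N_1],[N_2])$, exactly as after the destabilization. Thus $[N]=0$ if and only if $[N_1\sqcup N_2]=0$, so in all cases $N$ is $R$-\nullhomologous{} if and only if its closed destabilization is.

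Separately, since $L$ is $R$-\nullhomologous{} as a virtual link, fix a surface-link representative $L'$ that is $R$-\nullhomologous{}; we may assume its surface is closed, since capping boundary components is a destabilization and sends the zero homology class to the zero homology class. By \Cref{thm:what-is-virtual-link}, $L'$ likewise destabilizes to $L_0$ (up to weak equivalence) through closed destabilizations and weak equivalences, and \Cref{thm:closed-destab-once}, together with the evident weak-equivalence invariance of the property (an orientation-preserving diffeomorphism induces an isomorphism on homology carrying $[L']$ to $[f(L')]$), shows that $L_0\subset\Sigma_0\times I$ is $R$-\nullhomologous{}. Now apply the ``reflects'' direction of the previous paragraph along the genus-preserving chain joining $L$ to $L_0$: at each step the later surface link is $R$-\nullhomologous{}, hence so is the earlier one, and working back along the chain we conclude that $L\subset\Sigma\times I$ is $R$-\nullhomologous{}.

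I expect the main obstacle to be the destabilization bookkeeping of the first two paragraphs: verifying that the irreducible representative realizes the virtual genus, so that the chain from $L$ to $L_0$ is genus preserving, and the case analysis showing that a genus-preserving closed destabilization is either trivial or splits $\Theta$ as a connected sum along $C$. In effect one is checking that a virtual-genus-realizing representative differs from the irreducible one only by ``connect-summing'' split components, which is precisely the operation under which the homology class of the link, and hence the $R$-\nullhomologous{} property, is transported. If preferred, the homological steps can instead be phrased via $R$ Dehn numberings (\Cref{thm:dehn-numbering}), restricting a numbering to the complement of $C$ and gluing numberings across $C$, where the freedom to shift a numbering by a global constant on each connected piece makes the gluing possible.
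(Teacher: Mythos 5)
Your argument is correct, and although it uses the same two ingredients as the paper (Kuperberg's \Cref{thm:what-is-virtual-link} and the preservation statement \Cref{thm:closed-destab-once}), it resolves the key step by a genuinely different mechanism. The paper never destabilizes the given representative $L\subset\Sigma\times I$ at all: it destabilizes the $R$-nullhomologous representative $L'$ down to the irreducible (Kuperberg-minimal) surface link, observes that this is $R$-nullhomologous by \Cref{thm:closed-destab-once}, and then invokes the uniqueness clause of \Cref{thm:what-is-virtual-link} to assert that a spanning, closed, virtual-genus-realizing representative is itself weakly equivalent to that irreducible representative, so only the ``preserves'' direction of destabilization is ever used. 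You instead run the pruned-destabilization chain on $L\subset\Sigma\times I$ as well, use the genus hypothesis only to force every destabilization in that chain to be along a separating curve (genus preserving), and prove a supplementary two-way lemma --- via Mayer--Vietoris, or equivalently by restricting and gluing $R$ Dehn numberings as in \Cref{thm:dehn-numbering} --- showing that such destabilizations also \emph{reflect} the $R$-nullhomologous property, which you then transport backwards up the chain. Your route is longer but more robust: it does not rest on the claim that genus realization alone pins the representative down up to weak equivalence, a claim that needs care when the irreducible representative is disconnected (split links), since a connected genus-realizing representative can still admit essential separating vertical annuli; your genus-preserving analysis handles exactly those residual destabilizations, whereas the paper's proof is a few lines at the cost of leaning on that identification. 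The only loose ends in your write-up are bookkeeping: prune after each single destabilization so the intermediate links remain spanning (this is what forces the discarded piece in your empty-side case to be a single sphere), and record that discarding such a sphere component and applying weak equivalences trivially preserves and reflects the property.
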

\begin{proof}
  Let $L\subset\Sigma\times I$ be a spanning surface link that is $R$-\nullhomologous{}.
  If it is $R$-\nullhomologous{} as a virtual link, then there is some virtually equivalent $R$-\nullhomologous{} surface link $L\subset\Sigma'\times I$.
  By \Cref{thm:what-is-virtual-link}, there is a finite collection $\mathcal{A}=A_1\sqcup A_2\sqcup\dots\sqcup A_n\subset\Sigma'\times I$ of vertical annuli such that the pruned destabilization of $L\subset\Sigma'\times I$ along $\mathcal{A}$ gives the unique minimal representative surface link $L\subset\Sigma''\times I$ under virtual equivalence.
  Hence, $L\subset\Sigma''\times I$ is $R$-\nullhomologous{} by \Cref{thm:closed-destab-once}.
  If additionally $L\subset\Sigma\times I$ realizes the virtual genus, then there is a homeomorphism carrying $(\Sigma\times I,L)$ to $(\Sigma''\times I, L)$ by the uniqueness property, so $L\subset\Sigma\times I$ is $R$-\nullhomologous{}.
\end{proof}

\begin{remark}
  It is not true that every representative surface link of an $R$-\nullhomologous{} virtual link is $R$-\nullhomologous{}.
  For example, consider a homologically nontrivial simple closed curve $C$ in a closed oriented surface $\Sigma$.
  A way to manufacture more examples is to take an $R$ Dehn numbering of a diagram with two regions with different values, cutting out a disk from each of their interiors, and gluing the resulting boundaries together.
\end{remark}

It should be noted that the only rings $R$ worth considering for $R$-\nullhomologous{} surface links are quotients of $\Z$, as the following lemma implies.

\begin{lemma}
  \label{thm:z-mod-n-suff}
  For every commutative ring $R$, there exists some $n\in\N$ such that whenever $L\subset\Sigma\times I$ is a $\Z/n\Z$-\nullhomologous{} oriented surface link then $L$ is $R$-\nullhomologous{}.
\end{lemma}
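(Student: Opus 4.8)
The plan is to prove the stronger and cleaner fact that whether a surface link $L\subset\Sigma\times I$ is $R$-\nullhomologous{} depends only on the characteristic of $R$, and then take $n=\operatorname{char}(R)$ (with the convention $\Z/0\Z=\Z$, which covers the characteristic-zero case). Since $R$-\nullhomologous{}ness is a statement about the single class $[L]$ under change of coefficients, I expect this to follow from functoriality of homology in the coefficient group, with no geometric input beyond the observation already used elsewhere that $H_1(\Sigma\times I)\cong H_1(\Sigma)$.

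First I would note that the unique ring homomorphism $\Z\to R$ has kernel $n\Z$, so it factors as a composite of abelian group homomorphisms $\Z\surj\Z/n\Z\xrightarrow{\iota}R$, with $\iota$ injective. Next, recalling that for the oriented link $L$ the class $[L]$ in $H_1(\Sigma\times I;M)$ is the image of the integral fundamental class $[L]_{\Z}\in H_1(\Sigma\times I;\Z)$ under the change-of-coefficients map induced by $\Z\to M$, I would apply $H_1(\Sigma\times I;-)$ to the factorization above. This shows the change-of-coefficients map $H_1(\Sigma\times I;\Z)\to H_1(\Sigma\times I;R)$ passes through $H_1(\Sigma\times I;\Z/n\Z)$ via $\iota_*$, so that $[L]_R=\iota_*\bigl([L]_{\Z/n\Z}\bigr)$. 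Hence if $L$ is $\Z/n\Z$-\nullhomologous{}, i.e.\ $[L]_{\Z/n\Z}=0$, then $[L]_R=\iota_*(0)=0$ and $L$ is $R$-\nullhomologous{}, which is exactly the claim.

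I do not anticipate a genuine obstacle; the one point worth a remark is why $n$ cannot be taken smaller and positive in general. For $R=\Z$, a surface link whose class $[L]_{\Z}$ is $n$ times a nonzero primitive integral class is $\Z/n\Z$-\nullhomologous{} but not $\Z$-\nullhomologous{}, which is what forces the convention $\Z/0\Z=\Z$ when $\operatorname{char}(R)=0$. (If one prefers to keep $n$ a positive integer, the same bookkeeping can be carried out after choosing a free basis of $H_1(\Sigma;\Z)$ --- this group is free of finite rank because $\Sigma$ is a compact oriented surface --- and checking coordinatewise that $a\cdot 1_R=0$ in $R$ is equivalent to $\operatorname{char}(R)\mid a$; the coefficient-functorial phrasing simply avoids both choosing a basis and invoking the universal coefficient theorem.)
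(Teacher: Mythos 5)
Your proposal is correct and follows essentially the same route as the paper: take $n$ with $\ker(\Z\to R)=n\Z$, factor the unique ring homomorphism as $\Z\to\Z/n\Z\to R$, and push $[L]$ through the induced maps on $H_1(\Sigma\times I;-)$. The only difference is cosmetic — you invoke functoriality of change of coefficients directly (so $[L]_R=\iota_*([L]_{\Z/n\Z})$), whereas the paper verifies the same step concretely via the Universal Coefficient Theorem and a choice of basis for $H_1(\Sigma\times I)\cong\Z^k$; your stronger opening claim (an ``if and only if'') is not actually proved, but only the direction you do prove is needed for the lemma.
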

\begin{proof}
  Let $R$ be a commutative ring, let $i:\Z\to R$ be the {unique ring homomorphism}, and let $n\in\N$ be such that $\ker i=n\Z$ (note that this allows for $n=0$ when $i$ is an injection).
  Hence $i$ factors as {the composition} $\Z\surj \Z/n\Z\incl R$ {of ring homomorphisms}.

  Suppose $L\subset\Sigma\times I$ is a $\Z/n\Z$-\nullhomologous{} oriented surface link.
  Recall that $H_0(\Sigma\times I)$ is a free $\Z$-module, so for every commutative ring $R'$ the Universal Coefficient Theorem gives an isomorphism $H_1(\Sigma\times I)\otimes R'\to H_1(\Sigma\times I;R')$.
  We also have that $H_1(\Sigma\times I)\approx \Z^k$ for some $k\in\N$, where we can choose a basis such that $[L]$ is represented by $(r,0,\dots,0)$ for some $r\in\Z$.
  Thus, the induced maps
  \[
    H_1(\Sigma\times I) \to H_1(\Sigma\times I;\Z/n\Z) \to H_1(\Sigma\times I;R)
  \]
  from the factoring of $i$ correspond to maps $\Z^k\to (\Z/n\Z)^k\to R^k$ using the fact that $(\Z^k)\otimes R'\approx (R')^k$.
  Hence, $L$ being $\Z/n\Z$-\nullhomologous{} means that the image of $(r,0,\dots,0)$ in $(\Z/n\Z)^k$ is $(0,0,\dots,0)$, and thus its image in $R^k$ is $(0,0,\dots,0)$ as well.
  Therefore, $L$ is $R$-\nullhomologous{}.
\end{proof}

\begin{remark}
  \label{rmk:checkerboard-z2}
  According to \Cref{thm:dehn-numbering}, the $\Z/2\Z$-\nullhomologous{} surface links correspond to those with a $\Z/2\Z$ Dehn numbering.
  Taking $0$ and $1$ to be respectively black and white, one sees that $\Z/2\Z$-\nullhomologous{} surface links are the same as checkerboard colorable surface links.
  This well-known fact is recalled in \Cref{thm:checkerboard-z2}.
\end{remark}

\begin{remark}
  Furthermore, $\Z$-\nullhomologous{} (resp. $\Z/2\Z$-nullhomologous) links have oriented (resp. unoriented) spanning surfaces $F\subset\Sigma\times I$.
  One can generalize this notion to $\Z/n\Z$-\nullhomologous{} links, where we allow the surfaces to have singularities locally modeled on the product of a cone on $n$ points and an open interval, with induced orientations.
  (The $n=0$ case is just oriented surfaces, and the $n=2$ case is effectively unoriented surfaces, since the singularities correspond to local changes in orientation.)
  There is an algorithm like the Seifert algorithm to produce such singular surfaces for $\Z/n\Z$-\nullhomologous{} links, and we can use the usual cut-and-paste techniques on them to prove \Cref{thm:closed-destab-once}, but we do not explore this approach here.
\end{remark}

\begin{theorem}
  \label{thm:z-nullhomologous-poly}
  Suppose $G$ is an abelian group, and suppose $L\subset\Sigma\times I$ is a $G$-labeled framed oriented surface link such that for each $g\in G$, the sublink of those components labeled by $g$ is nullhomologous.
  Then,
  \[ \mathcal{A}(L)\in\Z[A^{\pm 1}]\subseteq R_G. \]
  In particular, if $L$ is \nullhomologous{} then $\langle L\rangle_{\mathrm{A}}=\langle L\rangle\in\Z[A^{\pm 1}]$.
\end{theorem}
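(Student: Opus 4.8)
The plan is to reduce the whole statement to the assertion that the homomorphism $h_{L,\lambda}\colon H_1(\Sigma)\to G$ built from the algebraic intersection form vanishes identically under the hypothesis. Once that is in hand, the state sum
\[
 \mathcal{A}(L)=\sum_S A^{a(S)-b(S)}(-A^2-A^{-2})^{b_0(S)}\prod_{C\in S}X_{\pm h_{L,\lambda}([C])}
\]
visibly collapses, since every $X_{\pm h_{L,\lambda}([C])}$ becomes $X_0=1$.

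First I would unwind the explicit formula $h_{L,\lambda}(\alpha)=\sum_{i=1}^n(\pi_*([L_i])\cdot\alpha)\,\lambda_i$ and regroup the sum according to the labels. For each $g\in G$ let $L^{(g)}$ be the sublink consisting of the components with label $g$, so that $[L^{(g)}]=\sum_{i\colon\lambda_i=g}[L_i]$ in $H_1(L)$; collecting terms gives
\[
 h_{L,\lambda}(\alpha)=\sum_{g\in G}\bigl(\pi_*([L^{(g)}])\cdot\alpha\bigr)\,g .
\]
By hypothesis each $L^{(g)}$ is nullhomologous, i.e.\ $[L^{(g)}]=0$ in $H_1(\Sigma\times I)$, so applying the homomorphism $\pi_*$ yields $\pi_*([L^{(g)}])=0$ in $H_1(\Sigma)$ and hence $\pi_*([L^{(g)}])\cdot\alpha=0$ for every $\alpha$. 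Thus $h_{L,\lambda}\equiv 0$, and feeding this into the state sum above gives $\mathcal{A}(L)=\sum_S A^{a(S)-b(S)}(-A^2-A^{-2})^{b_0(S)}\in\Z[A^{\pm 1}]\subseteq R_G$, which is the first claim.

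For the ``in particular'' I would specialize to the $1$-labeling $\lambda\colon H_0(L)\to\Z$ sending every component to $1$: then $L^{(1)}=L$ while $L^{(g)}=\emptyset$ for $g\neq 1$, so the hypothesis of the theorem reduces exactly to ``$L$ is nullhomologous'' and the first part gives $\mathcal{A}(L)\in\Z[A^{\pm 1}]$. On one hand, \Cref{thm:arrow-poly-from-homol} gives $(-A^2-A^{-2})\langle L\rangle_{\mathrm A}=\mathcal{A}(L)|_{X_i=K_{i/2}}=\mathcal{A}(L)$, there being no $X_i$ to substitute. On the other hand, pushing $\lambda$ forward along the unique homomorphism $\Z\to\{e\}$ and using that the induced ring map $R_1\to R_{\{e\}}$ sends each $X_i\mapsto 1$ (the identity here), together with the identity $\mathcal{A}(L,\text{trivial labeling})=(-A^2-A^{-2})\langle L\rangle$, gives $\mathcal{A}(L)=(-A^2-A^{-2})\langle L\rangle$. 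Comparing the two expressions yields $\langle L\rangle_{\mathrm A}=\langle L\rangle$.

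Everything here is routine; the only place demanding a little care is the bookkeeping in regrouping $h_{L,\lambda}$ by labels and confirming that ``nullhomologous'' in the paper's sense (that is, $\Z$-nullhomologous) is precisely the condition that kills each $\pi_*([L^{(g)}])$. I do not anticipate a genuine obstacle.
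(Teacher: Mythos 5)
Your proposal is correct and follows essentially the same route as the paper: decompose $h_{L,\lambda}$ as a sum over the label classes, observe that nullhomologousness of each sublink kills the corresponding intersection pairing so $h_{L,\lambda}\equiv 0$, and conclude that only $X_0=1$ appears in the state sum. Your extra bookkeeping for the ``in particular'' clause via \Cref{thm:arrow-poly-from-homol} and the pushforward to the trivial labeling is a correct spelling-out of what the paper leaves implicit.
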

\begin{proof}
  For each $g\in G$, let $L_g$ denote the sublink of $L$ consisting of those components labeled by $g$.
  Letting $\lambda:H_0(L)\to G$ be the labeling, for each $g\in G$ let $\lambda_g$ denote its restriction to $H_0(L_g)$.
  We have that
  \[
    h_{L,\lambda}(\alpha) = \sum_{g\in G}h_{L_g,\lambda_g}(\alpha).
  \]
  Since each $L_i$ is nullhomologous, each function $h_{L_g,\lambda_g}$ is the constant-zero function, and hence $h_{L,\lambda}$ is the constant-zero function.
  Therefore, only $X_0=1$ appears in the state sum for $\mathcal{A}(L)$.
\end{proof}

The following is a restatement of \cite[Theorem 1.5]{Dye2008} and \cite[Proposition 5.8]{Miyazawa2008} in terms of the homological arrow polynomial.
\begin{corollary}
  If $L$ is a classical link (that is, a surface link in $S^2\times I$), then
  \begin{equation*}
    \mathcal{A}(L) = (-A^2-A^{-2})\langle L\rangle,
  \end{equation*}
  where $\langle L\rangle$ is the Kauffman bracket.
\end{corollary}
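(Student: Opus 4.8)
The plan is to observe that in $S^2\times I$ the homomorphism $h_{L,\lambda}$ vanishes identically for trivial reasons, so that the state sum defining $\mathcal{A}(L)$ has every variable $X_{\pm h_{L,\lambda}([C])}$ equal to $X_0=1$ and therefore collapses to $(-A^2-A^{-2})$ times the Kauffman bracket state sum.

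First I would note that a classical link is a (framed) surface link $L\subset S^2\times I$ and that $H_1(S^2)=0$. Consequently every simple closed curve in $S^2$ is nullhomologous, so for any abelian group $G$ and labeling $\lambda:H_0(L)\to G$ the homomorphism $h_{L,\lambda}:H_1(S^2)\to G$ is zero, its domain being trivial. Equivalently, every sublink of $L$ is nullhomologous in $S^2\times I$, so one may invoke \Cref{thm:z-nullhomologous-poly} verbatim with $G=H_0(L)$ and $\lambda=\id$ to conclude $\mathcal{A}(L)\in\Z[A^{\pm 1}]\subseteq R_G$; in particular the value is independent of the labeling, which is why writing $\mathcal{A}(L)$ without specifying $\lambda$ is unambiguous here.

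Next I would substitute $h_{L,\lambda}\equiv 0$ into the state-sum formula
\[
  \mathcal{A}(L) = \sum_S A^{a(S)-b(S)}(-A^2-A^{-2})^{b_0(S)} \prod_{C\in S} X_{\pm h_{L,\lambda}([C])},
\]
so that each factor in the product is $X_0=1$. This leaves $\sum_S A^{a(S)-b(S)}(-A^2-A^{-2})^{b_0(S)}$, and pulling out one factor of $-A^2-A^{-2}$ gives $(-A^2-A^{-2})\sum_S A^{a(S)-b(S)}(-A^2-A^{-2})^{b_0(S)-1}$, which is precisely $(-A^2-A^{-2})\langle L\rangle$ with the Kauffman bracket normalized so the unknot has value $1$. (Equivalently, since $\Sk(S^2\times I)\cong\Z[A^{\pm 1}]$ is generated by the empty skein with $[L]=(-A^2-A^{-2})\langle L\rangle\cdot[\emptyset]$ and $\overline{h}_{L,\lambda}([\emptyset])=1$, the $\Z[A^{\pm 1}]$-linearity of $\overline{h}_{L,\lambda}$ yields the result at once.)

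There is no real obstacle; the only points requiring care are the normalization convention for $\langle L\rangle$, which is responsible for the lone factor $-A^2-A^{-2}$, and the implicit understanding that $L$ carries a framing so that the state sums of \Cref{thm:sfc-poly-1} and \Cref{def:homological-arrow-poly} apply, matching the usual regular-isotopy Kauffman bracket.
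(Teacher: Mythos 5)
Your proposal is correct, but it takes a more direct route than the paper. The paper's own proof stays at the level of previously established results: it notes $L$ is \nullhomologous{} because $H_1(S^2\times I;\Z)=0$, invokes \Cref{thm:z-nullhomologous-poly} to conclude $\langle L\rangle_{\mathrm{A}}=\langle L\rangle$, and then applies the comparison \Cref{thm:arrow-poly-from-homol} (which requires the $1$-labeling) to translate this into the statement about $\mathcal{A}(L)$; in effect it presents the corollary as a restatement of the Dye--Kauffman/Miyazawa fact about classical links. You instead argue from the definition: since $H_1(S^2)=0$, the homomorphism $h_{L,\lambda}$ has trivial domain, so every factor $X_{\pm h_{L,\lambda}([C])}$ in the state sum is $X_0=1$ and the sum collapses to $(-A^2-A^{-2})$ times the unknot-normalized Kauffman bracket (equivalently, via $\Sk(S^2\times I)\cong\Z[A^{\pm 1}]$ and the $\Z[A^{\pm 1}]$-linearity of $\overline{h}_{L,\lambda}$). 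Your argument buys self-containedness and makes explicit that the conclusion is independent of the labeling $\lambda$, bypassing both \Cref{thm:arrow-poly-from-homol} and the restriction to $1$-labeled links, while the paper's route buys brevity and situates the corollary within the already-proved machinery; your normalization of $\langle L\rangle$ (unknot $=1$) matches the paper's convention, so the lone factor $-A^2-A^{-2}$ is accounted for correctly.
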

\begin{proof}
  {The link is \nullhomologous{} since} $H_1(S^2\times I;\Z)=0$, and thus $\langle L\rangle_{\mathrm{A}}=\langle L\rangle$ by \Cref{thm:z-nullhomologous-poly}.
  We then apply \Cref{thm:arrow-poly-from-homol}.
\end{proof}

\begin{example}
  The virtual knot 4.105 has virtual genus $1$ and is \nullhomologous{} (see \Cref{fig:4n105}), hence we expect $\mathcal{A}(K_{4.105})\in\Z[A^{\pm1}]$.
  Indeed:
  \begin{equation*}
    \mathcal{A}(K_{4.105}) = (-A^2-A^{-2})(A^8 + 1 - A^{-4}).
  \end{equation*}
\end{example}

\begin{example}
  The virtual link $L$ in \Cref{fig:vlink1}, with the red component labeled by $1$ and the blue component labeled by $2$ has
  \begin{equation*}
    \mathcal{A}(L) = (-A^2-A^{-2})\left(-A^6+A^2-2A^{-2} + (-A^2+A^{-2}+A^{-6}-A^{-10})X_{1,-1}^2\right).
  \end{equation*}
  We can see how its arrow polynomial is in $\Z[A^{\pm 1}]$ since $X_{1,-1}\mapsto X_{0}$ when passing to the polynomial for the $1$-labeling of $L$.
  Thus, the homological arrow polynomial can sometimes detect non-classicality when the arrow polynomial cannot.
  However, if we reverse the orientation of the blue component, the corresponding arrow polynomial is
  \begin{equation*}
    \langle L\rangle_{\mathrm{A}} = -A^6+A^2-2A^{-2} + (-A^2+A^{-2}+A^{-6}-A^{-10})K_1^2,
  \end{equation*}
  so the set of arrow polynomials of all orientations of each link component is able to detect non-classicality in this case.
\end{example}

We now consider $\Z/n\Z$-\nullhomologous{} surface links more carefully.
If $L\subset \Sigma\times I$ is $\Z/n\Z$-\nullhomologous{}, then, as in the proof of \Cref{thm:z-mod-n-suff}, we have that $[L]\in nH_1(\Sigma\times I)$.
Hence, using the notation from \Cref{sec:homological-arrow-poly}, we have that for all $\alpha\in H_1(\Sigma)$ that $\pi_*([L])\cdot \alpha\in n\Z$.
This puts constraints on what can appear in the indices of the $X$ variables.

\begin{theorem}
  \label{thm:z-mod-n-nullhomologous-poly}
  Suppose $G$ is an abelian group, and suppose $L\subset\Sigma\times I$ is a $G$-labeled framed oriented surface link such there is an $n\in\N$ such that for each $g\in G$, the sublink of those components labeled by $g$ is $\Z/n\Z$-\nullhomologous{}.
  Then, with $nG=\{ng\mid g\in G\}$, we have that
  \[ \mathcal{A}(L)\in R_{nG}. \]
  In particular, if $nG=0$ then
  \[ \mathcal{A}(L)\in \Z[A^{\pm 1}].
  \]
\end{theorem}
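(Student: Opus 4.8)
The plan is to imitate the proof of \Cref{thm:z-nullhomologous-poly}, only now the relevant homomorphisms $h_{L_g,\lambda_g}$ will not vanish but will instead take values in $nG$. First I would fix notation: let $\lambda:H_0(L)\to G$ be the labeling, let $g$ range over the finitely many labels that actually occur, let $L_g\subseteq L$ be the sublink of components labeled by $g$, and let $\lambda_g$ be the restriction of $\lambda$ to $H_0(L_g)$. As in the proof of \Cref{thm:z-nullhomologous-poly}, bilinearity of the algebraic intersection pairing gives the decomposition $h_{L,\lambda}(\alpha)=\sum_g h_{L_g,\lambda_g}(\alpha)$ for all $\alpha\in H_1(\Sigma)$, and because every component of $L_g$ carries the same label $g$ the unpacked formula for $h$ reads $h_{L_g,\lambda_g}(\alpha)=\big(\pi_*([L_g])\cdot\alpha\big)\,g$, where $[L_g]$ denotes the fundamental class of $L_g$.

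The next step is to turn the $\Z/n\Z$-\nullhomologous{} hypothesis into a divisibility statement, exactly as in the proof of \Cref{thm:z-mod-n-suff}. Since $H_0(\Sigma\times I)$ is free, the Universal Coefficient Theorem identifies $H_1(\Sigma\times I;\Z/n\Z)$ with $H_1(\Sigma\times I)/nH_1(\Sigma\times I)$, so $[L_g]=0$ in $H_1(\Sigma\times I;\Z/n\Z)$ forces $[L_g]\in nH_1(\Sigma\times I)$; pushing forward along the isomorphism $\pi_*$ yields $\pi_*([L_g])\in nH_1(\Sigma)$. Hence $\pi_*([L_g])\cdot\alpha\in n\Z$ for every $\alpha\in H_1(\Sigma)$, so $h_{L_g,\lambda_g}(\alpha)\in nG$, and summing over $g$ gives $h_{L,\lambda}(\alpha)\in nG$ for all $\alpha$.

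Finally I would feed this into the state sum for $\mathcal{A}(L)=\overline{h}_{L,\lambda}([L])$: each variable that appears is $X_{\pm h_{L,\lambda}([C])}$ for a state curve $C$, and we have just shown every such index lies in $nG$, so $\mathcal{A}(L)$ lies in the image of the inclusion $R_{nG}\hookrightarrow R_G$. The ``in particular'' assertion is then immediate, because $nG=0$ makes $R_{nG}=\Z[A^{\pm1}][X_0]/(X_0-1)\cong\Z[A^{\pm1}]$. I do not anticipate a genuine obstacle here; the only places needing a little care are the passage from ``$\Z/n\Z$-\nullhomologous{}'' to ``$\pi_*([L_g])\in nH_1(\Sigma)$'' (borrowed essentially verbatim from \Cref{thm:z-mod-n-suff}) and the observation that the additive decomposition $h_{L,\lambda}=\sum_g h_{L_g,\lambda_g}$ is valid over the group $G$ itself, not merely over a free $\Z^n$.
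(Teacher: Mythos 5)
Your proposal is correct and follows the same route as the paper: the paper's proof is just the one-line remark that one argues as in \Cref{thm:z-nullhomologous-poly}, replacing $\pi_*([L_g])\cdot\alpha=0$ with $\pi_*([L_g])\cdot\alpha\in n\Z$ (the latter obtained from the $\Z/n\Z$-\nullhomologous{} hypothesis exactly as in \Cref{thm:z-mod-n-suff}). Your write-up simply fills in the details of that same decomposition $h_{L,\lambda}=\sum_g h_{L_g,\lambda_g}$ and the divisibility step, so there is nothing to correct.
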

\begin{proof}
  The proof is similar to the one for \Cref{thm:z-nullhomologous-poly}, but rather than making use of the fact that $\pi_*([L_g])\cdot \alpha=0$ we now use that $\pi_*([L_g])\cdot \alpha\in n\Z$.
\end{proof}

\begin{corollary}
  Suppose $L$ is a $1$-labeled $\Z/n\Z$-\nullhomologous{} virtual link.
  Then
  \[ \mathcal{A}(L)\in\Z[A^{\pm 1},X_n,X_{2n},X_{3n},\dots].
  \]
  Furthermore, if $n$ is odd, then $\mathcal{A}(L)\in\Z[A^{\pm 1},X_{2n},X_{4n},X_{6n},\dots]$.
\end{corollary}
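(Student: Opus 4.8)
The plan is to obtain this as a formal consequence of \Cref{thm:z-mod-n-nullhomologous-poly} together with the evenness statement \Cref{thm:h-evenness}. Since $L$ is $\Z/n\Z$-\nullhomologous{} as a virtual link, we may fix a representative surface link $L\subset\Sigma\times I$ with $\Sigma$ closed that is $\Z/n\Z$-\nullhomologous{}. For a $1$-labeled link the relevant label group is $G=\Z^{1}=\Z$, with labeling $\lambda:H_0(L)\to\Z$ sending every component to $1$. For $g\in\Z$ the sublink $L_g$ of components labeled by $g$ equals $L$ when $g=1$ and is empty otherwise; since $L_1=L$ is $\Z/n\Z$-\nullhomologous{} by hypothesis and the empty link is trivially so, the hypothesis of \Cref{thm:z-mod-n-nullhomologous-poly} holds with this $n$. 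That theorem therefore gives $\mathcal{A}(L)\in R_{n\Z}$. Because $n\Z=\{0,\pm n,\pm 2n,\dots\}$ and the variables $X_{\pm g}$ are indexed by unoriented classes, so that $X_{kn}=X_{-kn}$, we have $R_{n\Z}=\Z[A^{\pm 1}][X_{\pm g}\mid g\in n\Z]/(X_0-1)\cong\Z[A^{\pm 1},X_n,X_{2n},X_{3n},\dots]$, which is the first assertion.

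For the refinement when $n$ is odd, I would inspect the state sum for $\mathcal{A}(L)$, in which each simple-closed-curve component $C$ of a state contributes the variable $X_{\pm h_L([C])}$. On the one hand, $h_L([C])\in n\Z$ by the argument in the proof of \Cref{thm:z-mod-n-nullhomologous-poly} (namely $\pi_*([L])\cdot\alpha\in n\Z$ for every $\alpha\in H_1(\Sigma)$); on the other hand, $h_L([C])$ is even by \Cref{thm:h-evenness}. Hence every index appearing lies in $2\Z\cap n\Z$. As $n$ is odd we have $\gcd(2,n)=1$, so $2\Z\cap n\Z=2n\Z$, and therefore $\mathcal{A}(L)\in\Z[A^{\pm 1},X_{2n},X_{4n},X_{6n},\dots]$.

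I do not anticipate a genuine obstacle here; the statement is essentially bookkeeping on top of results already established. The two points requiring care are (i) identifying the label group of a $1$-labeled link with $\Z$ so that \Cref{thm:z-mod-n-nullhomologous-poly} applies directly, and (ii) combining ``index in $n\Z$'' with ``index even'' to conclude ``index in $2n\Z$'' in the odd case. In fact both assertions can be stated uniformly by noting that every index lies in $2\Z\cap n\Z=\operatorname{lcm}(2,n)\Z$, which is $n\Z$ when $n$ is even and $2n\Z$ when $n$ is odd.
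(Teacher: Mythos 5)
Your proposal is correct and follows essentially the same route as the paper: apply \Cref{thm:z-mod-n-nullhomologous-poly} (with the $1$-labeling, so the only sublink is $L$ itself) to get indices in $n\Z$, then combine with the evenness from \Cref{thm:h-evenness} to force indices into $2n\Z$ when $n$ is odd. The only difference is cosmetic bookkeeping (phrasing the odd case as $2\Z\cap n\Z=2n\Z$ rather than ``$kn$ even forces $k$ even'').
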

\begin{proof}
  By \Cref{thm:z-mod-n-nullhomologous-poly}, the only variables that can appear are of the form $X_{kn}$ for $k\in\N$, which gives the first part.
  When $n$ is odd, then, since $kn$ must be even by \Cref{thm:h-evenness}, we need for $k$ to be even, which gives the second part.
\end{proof}

\begin{remark}
  This corollary gives nothing for $\Z/2\Z$-\nullhomologous{} virtual links since it merely predicts that $\mathcal{A}(L)\in\Z[A^{\pm 1},X_2,X_{4},X_{6},\dots]$, which is already true for all virtual links by \Cref{thm:h-evenness}.
  We handle the specific properties of $\Z/2\Z$-\nullhomologous{} virtual links in \Cref{sec:checkerboard}.
\end{remark}

\subsection{Checkerboard colorability}
\label{sec:checkerboard}

A virtual link $L$ is \emph{checkerboard colorable} (introduced in \cite{Kamada2002}) if it has a surface link representative $L\subset \Sigma\times I$ such that the diagram for $L$ in that surface is checkerboard colorable.
An example of a checkerboard colorable virtual knot was given in \Cref{fig:vknot_3_5}.
As the following proposition shows, a virtual link is checkerboard colorable if and only if it is $\Z/2\Z$-\nullhomologous{}.

\begin{proposition}[{\hspace{1sp}\cite[Proposition 1.7]{Boden2019}}]
  \label{thm:checkerboard-z2}
  For a surface link $L\subset\Sigma\times I$, the following are equivalent:
  \begin{enumerate}
  \item The link is checkerboard colorable.
  \item The link is the boundary of an unoriented spanning surface $F\subset\Sigma\times I$.
  \item The link is $\Z/2\Z$-\nullhomologous{}.
  \end{enumerate}
\end{proposition}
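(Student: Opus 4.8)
The plan is to obtain $(1)\Leftrightarrow(3)$ directly from the Dehn-numbering machinery, and then close the loop with $(2)\Rightarrow(3)$ --- a one-line mod-$2$ chain argument --- and $(3)\Rightarrow(2)$ --- the classical checkerboard-surface construction. I would take $\Sigma$ closed throughout, which is the case relevant to virtual links and what \Cref{thm:dehn-numbering} requires, and I would freely identify $H_1(\Sigma\times I;R)$ with $H_1(\Sigma;R)$ via the projection $\pi:\Sigma\times I\to\Sigma$.

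For $(1)\Leftrightarrow(3)$ I would fix a diagram $D\subset\Sigma$ for $L$ and note that a checkerboard coloring of $D$ is a $2$-coloring of the regions of $\Sigma\setminus D$ in which regions adjacent across an edge receive opposite colors; writing the two colors as $0,1\in\Z/2\Z$, the requirement that adjacent regions differ is literally the condition ``$f(B)=f(A)+1$'' of \Cref{def:dehn-numbering}. Hence checkerboard colorings of $D$ are exactly $\Z/2\Z$ Dehn numberings of $D$, and one exists if and only if $L$ is $\Z/2\Z$-\nullhomologous{} by \Cref{thm:dehn-numbering}. This is the content of \Cref{rmk:checkerboard-z2}.

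For $(2)\Rightarrow(3)$, given $L=\partial F$ with $F\subset\Sigma\times I$ a compact surface that is not assumed orientable or connected, I would triangulate $\Sigma\times I$ so that $F$ and $L=\partial F$ are subcomplexes and let $\sigma_F\in C_2(\Sigma\times I;\Z/2\Z)$ be the sum of the $2$-simplices of $F$. A $1$-simplex interior to $F$ lies in exactly two $2$-simplices of $F$ and so has coefficient $0$ in $\partial\sigma_F$, while a $1$-simplex of $L$ lies in exactly one; thus $\partial\sigma_F$ is the mod-$2$ fundamental cycle of $L$, and $[L]=\partial[\sigma_F]=0$ in $H_1(\Sigma\times I;\Z/2\Z)$. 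That $F$ may fail to be orientable is precisely why this argument is confined to coefficients mod $2$.

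For $(3)\Rightarrow(2)$ I would use $(1)\Leftrightarrow(3)$ to fix a checkerboard-colored diagram $D\subset\Sigma$ for $L$ and carry out the standard checkerboard-surface construction: let $F_0\subset\Sigma$ be the closure of the union of the regions of one color, a compact subsurface whose frontier runs along the edges of $D$ with only the crossings failing to be manifold points (two diagonally opposite corners of $F_0$ are pinched there); push $F_0$ into $\interior(\Sigma\times I)$ and, at each crossing, join the two corners by a small half-twisted band whose twist is dictated by the over/under data of that crossing. The result is a compact, generally non-orientable, surface $F\subset\Sigma\times I$ with $\partial F=L$. I expect this construction to be where essentially all the work of the proposition lies: one must check that the band resolutions produce an \emph{embedded} surface and that its boundary is $L$ itself rather than a smoothing of the diagram --- it is the over/under information at each crossing that forces this, and the half-twists are what make $F$ non-orientable in general. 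Alternatively one could produce $F$ homologically: the hypothesis that $L$ is $\Z/2\Z$-\nullhomologous{} lifts $[L]$ through the connecting homomorphism $H_2(\Sigma\times I,L;\Z/2\Z)\to H_1(L;\Z/2\Z)$ of the pair, and Poincar\'e--Lefschetz duality together with representing the resulting dual class by a map to a real projective space realizes $F$ as the preimage of a hyperplane.
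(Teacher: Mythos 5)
Your proposal is correct and uses exactly the same three ingredients as the paper: the identification of checkerboard colorings with $\Z/2\Z$ Dehn numberings (via \Cref{thm:dehn-numbering} and \Cref{rmk:checkerboard-z2}), the mod-$2$ chain argument that a spanning surface kills $[L]$ in $H_1(\Sigma\times I;\Z/2\Z)$, and the checkerboard-surface construction joining same-colored regions by half-twisted bands at crossings. You merely traverse the implications in a slightly different order (proving $(1)\Leftrightarrow(3)$ first and then $(3)\Rightarrow(2)$ through it) and supply more detail than the paper's brief proof, so no further comment is needed.
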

\begin{proof}
  That (1) implies (2) is that, given a checkerboard coloring, we can construct a checkerboard surface: take all the regions of a particular color, and at crossings connect them using half twists.
  That (2) implies (3) is that the unoriented surface $F$ can be thought of as a $2$-chain with $\Z/2\Z$ coefficients.
  That (3) implies (1) is the observation in \Cref{rmk:checkerboard-z2}, which is that a checkerboard coloring is the data of a $\Z/2\Z$ Dehn numbering.
\end{proof}

One class of checkerboard colorable virtual links are the alternating virtual links.
\begin{proposition}[{\hspace{1sp}\cite[Lemma 7]{Kamada2002}}]
  \label{thm:alt-checkerboard}
  If $L$ is an alternating virtual link, then it is checkerboard colorable.
\end{proposition}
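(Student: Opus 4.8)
The plan is to construct a checkerboard coloring of a cellular alternating diagram explicitly, recovering Kamada's argument. Fix an alternating diagram $D$ for $L$; the over/under pattern along the strands of $D$ is intrinsic combinatorial data, so we may take $\Sigma$ to be the closed surface in which $D$ is cellularly embedded, and then every complementary region of $D$ is a disk. We may assume $D$ has at least one crossing (the crossingless case being immediate: each component circle bounds two disks, to be colored oppositely). At each crossing of $D$, turning the over-strand counterclockwise sweeps out two opposite corners; declare these \emph{black} and the other two \emph{white}. At every crossing the two black corners are opposite, the two white corners are opposite, and corners sharing an edge are oppositely colored, so this prescribes a $\Z/2\Z$-valued function $g$ on the set of all corners of $D$, determined by the orientation of $\Sigma$.

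First I would prove the local claim that makes $g$ coherent: for each edge $e$ of $D$, the two corners lying on a fixed side of $e$ receive the same $g$-value. An edge joins two half-edges, at consecutive crossings along a strand (2-valent vertices are mere subdivisions, and the case of both endpoints coinciding is handled the same way), and this is the one point where the hypothesis is used: since $D$ is alternating, $e$ occupies an over-slot at one endpoint and an under-slot at the other. Running through the possibilities for which slots $e$ occupies, the reversal of the direction of $e$ between its two endpoints is compensated exactly by this over/under swap, forcing the flanking colors to agree; were $e$ an over-strand at both endpoints the two local colorings would instead be forced to disagree. Granting the claim, going around the boundary circle of any disk region $R$ shows $g$ is constant on the corners of $R$, so $g$ descends to a function $f\colon\mathcal R\to\Z/2\Z$; and the same claim shows that the two sides of any edge are distinct regions with opposite $f$-values (if they agreed, $R$ would occupy two adjacent, oppositely colored corners at some crossing).

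Hence $f$ is a checkerboard coloring of $D$ and $L$ is checkerboard colorable; equivalently, over $\Z/2\Z$ the relation $f(B)=f(A)+1$ across an edge just says that $f$ flips across that edge, so $f$ is a $\Z/2\Z$ Dehn numbering, $L$ is $\Z/2\Z$-\nullhomologous{} by \Cref{thm:dehn-numbering}, and $L$ is checkerboard colorable by \Cref{thm:checkerboard-z2}. I expect the only real obstacle to be the bookkeeping inside the local claim --- keeping track, against the fixed orientation of $\Sigma$, of which corner lies counterclockwise from which half-edge at each end of $e$ and how this interacts with the direction in which $e$ is traversed --- which is best isolated as a one-line lemma with an accompanying figure rather than argued in words. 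Everything else is formal, and the alternating hypothesis enters exactly once, in the over/under swap.
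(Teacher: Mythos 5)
Your proof is correct, but it takes a different route from the paper's. You build the checkerboard coloring by hand: color the two corners at each crossing swept out counterclockwise from the over-strand, use the alternating hypothesis to check that the colors flanking each edge on a fixed side agree, and conclude that the corner coloring descends to a $2$-coloring of the (disk) regions with opposite colors across every edge; the appeal to \Cref{thm:dehn-numbering} and \Cref{thm:checkerboard-z2} is then optional, since you already have the coloring. This is essentially Kamada's original construction. The paper instead argues homologically: perform the $A$-smoothing at every crossing of a cellularly embedded alternating diagram, observe that (after a small perturbation) each resulting state curve lies inside a single complementary disk region and is therefore $\Z/2\Z$-nullhomologous, note that smoothing does not change the mod-$2$ homology class, and conclude that $L$ is $\Z/2\Z$-nullhomologous, hence checkerboard colorable by \Cref{thm:checkerboard-z2}. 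The two arguments hinge on the same local feature of alternating crossings, but they package it differently: the paper's version is shorter given that the equivalence with $\Z/2\Z$-nullhomology is already in place, and it adapts immediately to the corollary that positive alternating links are $\Z$-nullhomologous (orient the $A$-state), whereas your version produces the explicit coloring --- the same structure the paper later exploits when it takes the $A$-state loops to bound black disks in the $h$-adequacy argument. The one place where your write-up defers work is the local consistency lemma at an edge (over at one end, under at the other), which you correctly identify as the sole use of the alternating hypothesis; that check is routine and your sketch of it (including the observation that an over--over edge would force a disagreement) is accurate, though for completeness you should also dispose of the crossingless case componentwise rather than asserting a cellular embedding of a multi-circle diagram in a single closed surface.
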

\begin{proof}
  Consider a diagram $D$ for $L$ on a closed oriented surface $\Sigma$ such that each region of $\Sigma-D$ is a disk --- that is, the diagram is cellularly embedded.
  We then perform the $A$ smoothing of each crossing, obtaining a closed unoriented $1$-manifold $S$:
  \[
\begingroup%
  \makeatletter%
  \providecommand\color[2][]{%
    \errmessage{(Inkscape) Color is used for the text in Inkscape, but the package 'color.sty' is not loaded}%
    \renewcommand\color[2][]{}%
  }%
  \providecommand\transparent[1]{%
    \errmessage{(Inkscape) Transparency is used (non-zero) for the text in Inkscape, but the package 'transparent.sty' is not loaded}%
    \renewcommand\transparent[1]{}%
  }%
  \providecommand\rotatebox[2]{#2}%
  \newcommand*\fsize{\dimexpr\f@size pt\relax}%
  \newcommand*\lineheight[1]{\fontsize{\fsize}{#1\fsize}\selectfont}%
  \ifx\svgwidth\undefined%
    \setlength{\unitlength}{135.40418646bp}%
    \ifx\svgscale\undefined%
      \relax%
    \else%
      \setlength{\unitlength}{\unitlength * \real{\svgscale}}%
    \fi%
  \else%
    \setlength{\unitlength}{\svgwidth}%
  \fi%
  \global\let\svgwidth\undefined%
  \global\let\svgscale\undefined%
  \makeatother%
  \begin{picture}(1,0.33131298)%
    \lineheight{1}%
    \setlength\tabcolsep{0pt}%
    \put(0,0){\includegraphics[width=\unitlength,page=1]{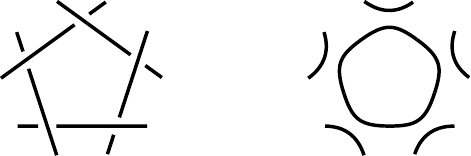}}%
    \put(0.44308395,0.15068266){\makebox(0,0)[lt]{\lineheight{1.25}\smash{\begin{tabular}[t]{l}$\longrightarrow$\end{tabular}}}}%
  \end{picture}%
\endgroup%

  \]
  After a small perturbation, each component of $S$ lies in $\Sigma-D$ and is thus $\Z/2\Z$-\nullhomologous{}.
  Since $S$ is $\Z/2\Z$-\homologous{} to $L$, it follows from \Cref{thm:checkerboard-z2} that $L$ is checkerboard colorable.
\end{proof}

As a quick corollary, by using the idea of the proof we get the following result for \emph{positive} virtual links, which are virtual links whose crossings are all positive.
\begin{corollary}
  If $L$ is a positive alternating virtual link, then it is $\Z$-\nullhomologous{}, and therefore $\mathcal{A}(L)\in\Z[A^{\pm 1}]$.
\end{corollary}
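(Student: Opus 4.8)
The plan is to upgrade the $\Z/2$-coefficient argument in the proof of \Cref{thm:alt-checkerboard} to a $\Z$-coefficient argument, the extra input being positivity. First I would fix an alternating positive diagram for $L$ and pass to its cellular embedding in a closed oriented surface $\Sigma$ (gluing disks to the complementary regions of the underlying ribbon graph, as in \Cref{sec:introduction}); this is still alternating and still positive and represents the same virtual link. Orient $L$.

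The key observation is that, with the usual conventions, at a positive crossing the $A$-smoothing of the Kauffman bracket coincides with the oriented (Seifert-algorithm) smoothing; this is a routine check of the local picture, and if one's convention has it the other way one simply works with the $B$-state throughout, since for an alternating cellular diagram the $B$-state circles are boundaries of complementary faces just as the $A$-state circles are. Since every crossing of $D$ is positive, performing the $A$-smoothing at every crossing — the state $S$ appearing in the proof of \Cref{thm:alt-checkerboard} — is the oriented smoothing at every crossing, so $S$ is a coherently oriented closed $1$-manifold in $\Sigma$, and $[S] = \pi_*[L]$ in $H_1(\Sigma;\Z)$ (not merely mod $2$), where $\pi:\Sigma\times I\to\Sigma$ is the projection, because replacing transverse oriented strands by the oriented smoothing does not change the integral $1$-cycle they represent. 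On the other hand, because $D$ is alternating and cellularly embedded, each component of $S$ is the boundary of a complementary region of $D$, which is a disk, so each component is nullhomologous in $\Sigma$ and hence $[S]=0$ in $H_1(\Sigma;\Z)$.

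Combining these, $\pi_*[L]=0$, and since $\pi$ is a homotopy equivalence $[L]=0$ in $H_1(\Sigma\times I;\Z)$; that is, $L$ is $\Z$-\nullhomologous{} as a surface link, hence as a virtual link. Finally, since the homological arrow polynomial is a virtual link invariant we may compute $\mathcal{A}(L)$ from this representative and apply \Cref{thm:z-nullhomologous-poly} to conclude $\mathcal{A}(L)\in\Z[A^{\pm 1}]$; combined with \Cref{thm:arrow-poly-from-homol} this also recovers $\langle L\rangle_{\mathrm{A}}=\langle L\rangle$.

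The hard part will be the bookkeeping at the crossing, namely verifying cleanly that positivity forces the $A$-smoothing (equivalently, the smoothing dictated by the chosen Kauffman-bracket convention) to be the oriented one at every crossing, so that the state curve $S$ from the alternating argument is not just $\Z/2$-\homologous{} but genuinely oriented and $\Z$-\homologous{} to $L$; everything else is inherited verbatim from the proof of \Cref{thm:alt-checkerboard}.
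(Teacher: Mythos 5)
Your proposal is correct and follows essentially the same route as the paper: take the cellularly embedded alternating diagram, observe that positivity makes the all-$A$ state the oriented (Seifert) smoothing so that $S$ is $\Z$-\homologous{} to $L$, note each state component bounds a disk by the alternating argument, and conclude via \Cref{thm:z-nullhomologous-poly}. The extra care you take at the crossing convention is fine but not a divergence --- with the paper's conventions the $A$-smoothing of a positive crossing is indeed the oriented one.
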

\begin{proof}
  Since $L$ is a positive surface link, the $A$ smoothing yields an oriented $1$-manifold $S$ that is $\Z$-\homologous{} to $L$.
    Furthermore, since $L$ is alternating, each component of $S$ bounds a disk, and thus, as it is oriented, $S$ is $\Z$-\nullhomologous{}.
    Therefore $L$ is $\Z$-\nullhomologous{}.
\end{proof}

An interesting fact about having a checkerboard coloring is that the state loops can be canonically oriented.
The coloring induces a coloring of the regions in the complement of a given state, and the state loops are the oriented boundary of the resulting black region:
\begin{equation*}
\begingroup%
  \makeatletter%
  \providecommand\color[2][]{%
    \errmessage{(Inkscape) Color is used for the text in Inkscape, but the package 'color.sty' is not loaded}%
    \renewcommand\color[2][]{}%
  }%
  \providecommand\transparent[1]{%
    \errmessage{(Inkscape) Transparency is used (non-zero) for the text in Inkscape, but the package 'transparent.sty' is not loaded}%
    \renewcommand\transparent[1]{}%
  }%
  \providecommand\rotatebox[2]{#2}%
  \newcommand*\fsize{\dimexpr\f@size pt\relax}%
  \newcommand*\lineheight[1]{\fontsize{\fsize}{#1\fsize}\selectfont}%
  \ifx\svgwidth\undefined%
    \setlength{\unitlength}{297.23869864bp}%
    \ifx\svgscale\undefined%
      \relax%
    \else%
      \setlength{\unitlength}{\unitlength * \real{\svgscale}}%
    \fi%
  \else%
    \setlength{\unitlength}{\svgwidth}%
  \fi%
  \global\let\svgwidth\undefined%
  \global\let\svgscale\undefined%
  \makeatother%
  \begin{picture}(1,0.13078079)%
    \lineheight{1}%
    \setlength\tabcolsep{0pt}%
    \put(0,0){\includegraphics[width=\unitlength,page=1]{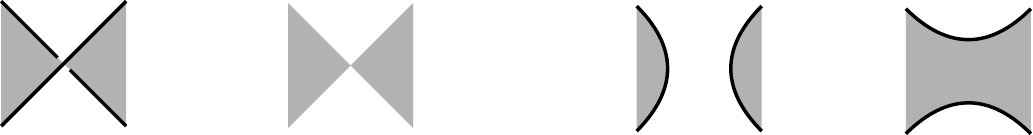}}%
    \put(0.47890662,0.05854444){\makebox(0,0)[lt]{\lineheight{1.25}\smash{\begin{tabular}[t]{l}$\longrightarrow$\end{tabular}}}}%
    \put(0,0){\includegraphics[width=\unitlength,page=2]{checkerboard-orient.pdf}}%
    \put(0.17107292,0.05854444){\makebox(0,0)[lt]{\lineheight{1.25}\smash{\begin{tabular}[t]{l}and\end{tabular}}}}%
    \put(0.79178641,0.05854444){\makebox(0,0)[lt]{\lineheight{1.25}\smash{\begin{tabular}[t]{l}or\end{tabular}}}}%
  \end{picture}%
\endgroup%

\end{equation*}
Hence, a framed oriented virtual link $L$ with a fixed checkerboard coloring $\chi$ does not have a sign ambiguity in the algebraic intersection numbers, and so we may define a polynomial $\mathcal{A}(L,\chi)\in\Z[A^{\pm 1}][X_I:I\in\Z^{n}]/(X_0-1)$.
One has $\mathcal{A}(L)=\left.\mathcal{A}(L,\chi)\right|_{X_I\mapsto X_{\pm I}}$.

The dependence on the checkerboard coloring is that, if $\chi'$ is the other checkerboard coloring, then $\mathcal{A}(L,\chi')=\mathcal{A}(L,\chi)|_{X_I\mapsto X_{-I}}$.
This is because the induced orientations on state loops for $\chi'$ are the reverse of those for $\chi$.

Suppose $L$ is $1$-labeled and consider the whiskers in the expansion for $\mathcal{A}(L)$.
The whiskers come in pairs at each crossing, where evidently one points into the black region and the other points into the white region.
Hence, if we interpret the whiskers as vectors inducing a flow across the state circles between the white region and the black region, the total flow into the black region is zero.
Thus, if $p X_{i_1}X_{i_2}\cdots X_{i_k}$ is a monomial of $\mathcal{A}(L,\chi)$ with $p\in\Z[A^{\pm 1}]$ and $i_j\in\Z$ for each $1\leq j\leq k$, allowing duplicates, then $\sum_{j=1}^ki_j=0$.
We can use this observation to reprove the following:

\begin{theorem}[{\hspace{1sp}\cite[Theorem 4.3]{Deng2020}}]
  \label{thm:deng2020}
  Let $L$ be a checkerboard colorable $1$-labeled oriented framed virtual link.
  For each monomial $p X_{i_1}X_{i_2}\cdots X_{i_k}$ of $\mathcal{A}(L)$ with $p\in\Z[A^{\pm 1}]$, $k\geq 1$, and $i_j\geq 1$ for all $1\leq j\leq k$, allowing duplicate factors, then
  \begin{enumerate}
  \item $\sum_{j=1}^ki_j\equiv 0\pmod{4}$, and
  \item $i_k\leq \sum_{j=1}^{k-1}i_j$.
  \end{enumerate}
  In particular, $k\geq 2$.
\end{theorem}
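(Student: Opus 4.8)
The plan is to reduce both parts to the zero-sum observation established just before the theorem statement --- that every monomial $p\,X_{m_1}\cdots X_{m_k}$ of $\mathcal{A}(L,\chi)$ with $p\neq 0$ and each $m_j\in\Z\setminus\{0\}$ satisfies $\sum_{j=1}^k m_j=0$, via the ``total flow into the black region is zero'' argument --- and then to feed in the evenness of the indices from \Cref{thm:h-evenness}. The point specific to this theorem is just that a $\pm1$-combination of positive integers can vanish only if no single term exceeds the sum of the others, applied once to the $i_j$ and once to the halved values.

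First I would lift a monomial of $\mathcal{A}(L)$ to one of $\mathcal{A}(L,\chi)$. Since $\mathcal{A}(L)$ is the image of $\mathcal{A}(L,\chi)$ under the ring homomorphism $X_I\mapsto X_{\pm I}$, which for a $1$-labeled link sends $X_I\mapsto X_{\abs{I}}$ and does not change the number of $X$-factors of a monomial (every index in play being nonzero), the coefficient in $\mathcal{A}(L)$ of a monomial $X_{i_1}\cdots X_{i_k}$ with all $i_j\geq 1$ is the sum of the coefficients in $\mathcal{A}(L,\chi)$ of all monomials $X_{m_1}\cdots X_{m_k}$ whose multiset of absolute values $\{\abs{m_1},\dots,\abs{m_k}\}$ equals $\{i_1,\dots,i_k\}$. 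Hence if $p\neq 0$, some such $X_{m_1}\cdots X_{m_k}$ occurs in $\mathcal{A}(L,\chi)$ with nonzero coefficient; every $m_j$ is nonzero because $i_j\geq 1$, so the zero-sum observation gives $\sum_{j=1}^k m_j=0$. Reordering so that $\abs{m_j}=i_j$ and writing $m_j=\epsilon_j i_j$ with $\epsilon_j\in\{\pm 1\}$, this reads $\sum_{j=1}^k \epsilon_j i_j=0$.

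With this identity the two claims are immediate. For part (2): for each index $\ell$ we have $i_\ell=\bigl|\sum_{j\neq\ell}(-\epsilon_j)i_j\bigr|\leq\sum_{j\neq\ell}i_j$, which in particular is the asserted inequality $i_k\leq\sum_{j=1}^{k-1}i_j$; if $k=1$ this would force $i_1\leq 0$, contradicting $i_1\geq 1$, so $k\geq 2$. For part (1): by \Cref{thm:h-evenness} each $i_j$ is even, say $i_j=2i_j'$ with $i_j'\geq 1$; dividing the identity by $2$ gives $\sum_j\epsilon_j i_j'=0$, so the $i_j'$ with $\epsilon_j=+1$ sum to the same value $P$ as those with $\epsilon_j=-1$, whence $\sum_j i_j=2\sum_j i_j'=4P\equiv 0\pmod 4$. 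The only step needing care is the lifting step --- confirming that cancellation in $\mathcal{A}(L)$ cannot destroy the existence of a witnessing monomial in $\mathcal{A}(L,\chi)$ --- but this is a formal consequence of the substitution being homogeneous in the $X$-variables, so coefficient extraction in $\mathcal{A}(L)$ is a finite sum of coefficients of $\mathcal{A}(L,\chi)$.
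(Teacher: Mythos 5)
Your proposal is correct and follows essentially the same route as the paper: lift the monomial to one of $\mathcal{A}(L,\chi)$, invoke the zero-sum (``flow'') identity $\sum_j \epsilon_j i_j=0$, get (2) and $k\geq 2$ by the triangle inequality, and get (1) from evenness of the indices (your ``divide by 2 and pair off signs'' computation is just a rephrasing of the paper's observation that $i'_j\equiv i_j\pmod 4$ when $i_j$ is even). Your extra care about why a witnessing monomial of $\mathcal{A}(L,\chi)$ survives the substitution $X_I\mapsto X_{\pm I}$ is a harmless elaboration of a step the paper states without comment.
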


\begin{proof}
  Given a checkerboard coloring $\chi$ for $L$, such a monomial comes from (at least one) monomial of $\mathcal{A}(L,\chi)$ in the sense that there exists a monomial $p' X_{i'_1}X_{i'_2}\cdots X_{i'_k}$ of $\mathcal{A}(L,\chi)$ such that $i_j=\abs{i'_j}$ for each $1\leq j\leq k$.
  \Cref{thm:h-evenness} implies $i_j$ is even for all $j$.
  (One may also use the alternative whisker accounting from \Cref{fig:altern-whisker-expansion} to see that there are always an even number of whiskers going into and going out of the black region, and so each $i'_j$ is even.)
  This implies $i'_j\equiv i_j\pmod{4}$ for all $j$, and thus (1) follows from $\sum_{j=1}^ki'_j=0$.

  Next, by the assumption that $k\geq 1$, we have $i'_k=-\sum_{j=1}^{k-1}i'_j$, to which we apply the triangle inequality to obtain (2):
  \begin{equation*}
    i_k = \abs{i'_k} = \left\lvert\sum_{j=1}^{k-1}i'_j \right\rvert \leq \sum_{j=1}^{k-1} \abs{i'_j} = \sum_{j=1}^{k-1} i_j.
  \end{equation*}
  Lastly, $k\neq 1$ since (2) implies $i_1\leq 0$ but $i_1\geq 1$.
\end{proof}

\begin{remark}
  Condition (1) is equivalently that $\mathcal{A}(L)|_{X_k\mapsto x^k}$ is a polynomial in $\Z[A^{\pm 1},x^4]$ if $L$ is checkerboard colorable.
\end{remark}

In \cite{Deng2020}, they use this theorem to resolve checkerboard non-colorability of six of the seven exceptions from \cite{Imabeppu2016}, which are 4.55, 4.56, 4.59, 4.72, 4.76, 4.77, 4.96, where the theorem says nothing about 4.72 since $\langle K_{4.72}\rangle_{\mathrm{A}}=1$.
In all six cases, the condition that $k\geq 2$ for each term suffices.

We resolve the case of 4.72 by using cabled arrow polynomials.
Given a framed surface link $L\subset \Sigma\times I$, recall that the $n$-cabling is the framed surface link obtained from taking the framing annulus for $L$ and subdividing it into $n$ parallel copies.
The homology class of the $n$-cabling is $n[L]$, so when $n$ is even the $n$-cabling is always checkerboard colorable, and when $n$ is odd then the link is checkerboard colorable if and only if its $n$-cabling is.
We can see how checkerboard coloring is preserved in a direct way, for example with the $3$-cabling:
\begin{equation*}
\begingroup%
  \makeatletter%
  \providecommand\color[2][]{%
    \errmessage{(Inkscape) Color is used for the text in Inkscape, but the package 'color.sty' is not loaded}%
    \renewcommand\color[2][]{}%
  }%
  \providecommand\transparent[1]{%
    \errmessage{(Inkscape) Transparency is used (non-zero) for the text in Inkscape, but the package 'transparent.sty' is not loaded}%
    \renewcommand\transparent[1]{}%
  }%
  \providecommand\rotatebox[2]{#2}%
  \newcommand*\fsize{\dimexpr\f@size pt\relax}%
  \newcommand*\lineheight[1]{\fontsize{\fsize}{#1\fsize}\selectfont}%
  \ifx\svgwidth\undefined%
    \setlength{\unitlength}{240.0003978bp}%
    \ifx\svgscale\undefined%
      \relax%
    \else%
      \setlength{\unitlength}{\unitlength * \real{\svgscale}}%
    \fi%
  \else%
    \setlength{\unitlength}{\svgwidth}%
  \fi%
  \global\let\svgwidth\undefined%
  \global\let\svgscale\undefined%
  \makeatother%
  \begin{picture}(1,0.3749994)%
    \lineheight{1}%
    \setlength\tabcolsep{0pt}%
    \put(0,0){\includegraphics[width=\unitlength,page=1]{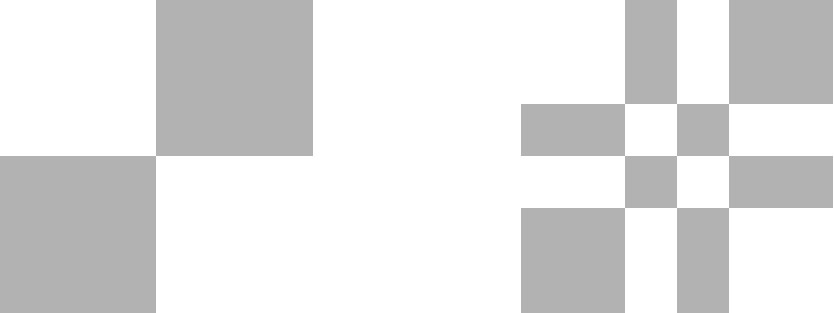}}%
    \put(0.45975063,0.17950513){\makebox(0,0)[lt]{\lineheight{1.25}\smash{\begin{tabular}[t]{l}$\longleftrightarrow$\end{tabular}}}}%
    \put(0,0){\includegraphics[width=\unitlength,page=2]{checkerboard-cabling.pdf}}%
  \end{picture}%
\endgroup%

\end{equation*}
The arrow polynomial of the $3$-cabling of the $0$-framed 4.72 is
\begin{equation*}
  \begin{gathered}
    - 2A^{-32} + 4A^{-28} + 29A^{-24} + 108A^{-20} + 273A^{-16} + 575A^{-12} +
    952A^{-8} + 1298A^{-4} + 1426 \\{}+ 1298 A^4  + 952 A^8 + 575 A^{12} + 273 A^{16} +
    108 A^{20} + 29 A^{24} + 4 A^{28} - 2 A^{32} \\
    {} + K_3 (-8A^{-30} - 28A^{-26} - 69A^{-22} - 116A^{-18} - 130A^{-14} - 41
    A^{-10} + 111A^{-6} + 242A^{-2} + 242 A^2 \\{}+ 111 A^6 - 41 A^{10} -
    130 A^{14} - 116 A^{18} - 69 A^{22} - 28 A^{26} - 8 A^{30}) \\
    {} + K_{6} (8A^{-28} + 44A^{-24} + 142A^{-20} + 328A^{-16} + 618A^{-12} +
    944A^{-8} + 1210A^{-4} + 1308 + 1210 A^4 \\{}+ 944 A^8 + 618 A^{12} + 328 A^{16} +
    142 A^{20} + 44 A^{24} + 8 A^{28}) \\
    {} + K_{9}(A^{-26} + 4A^{-22} + 17
    A^{-18} + 44A^{-14} + 80A^{-10} + 108A^{-6} + 121A^{-2} + 121 A^2 +
    108 A^6 \\{}+ 80 A^{10} + 44 A^{14} + 17 A^{18} + 4 A^{22} + A^{26}) \\
    {} + K_3^2(2A^{-36} - 24A^{-28} - 115A^{-24} - 327A^{-20} - 709A^{-16}
    - 1252A^{-12} - 1857A^{-8} - 2347A^{-4} -2534\\{} - 2347 A^4 - 1857 A^8 -
    1252 A^{12} - 709 A^{16} - 327 A^{20} - 115 A^{24} - 24 A^{28} + 2 A^{36}) \\
    {} + K_3 X_{6}(-10A^{-26} - 39A^{-22} - 88A^{-18} - 148A^{-14} - 219A^{-10} -
       300A^{-6} - 360A^{-2} - 360 A^2 \\{}- 300 A^6 - 219 A^{10} - 148 A^{14} -
       88 A^{18} - 39 A^{22} - 10 A^{26})  \\
    {} + K_3^3(4A^{-30} + 18A^{-26} + 44A^{-22} + 82A^{-18} + 126A^{-14} + 165A^{-10} + 190A^{-6}
    + 199A^{-2} + 199 A^2 \\{}+ 190 A^6 + 165 A^{10} +
    126 A^{14} + 82 A^{18} + 44 A^{22} + 18 A^{26} + 4 A^{30}).
  \end{gathered}
\end{equation*}
As a polynomial over $\Z[A^{\pm 1}]$, the monomials with nonzero coefficients are $1$, $K_3$, $K_{6}$, $K_{9}$, $K_{3}^2$, $K_3K_{6}$, and $K_3^3$, so we notice that the $k\geq 2$ condition of \Cref{thm:deng2020} does not hold, and hence the $3$-cabling of 4.72 (and thus 4.72 itself) is not checkerboard colorable.

This completes Imabeppu's characterization of checkerboard colorability of all virtual knots up to four crossings using arrow polynomials.

\subsection{Alternating virtual links}
\label{sec:alternating-links}

For classical links, a \emph{reduced} alternating link diagram is one with no nugatory crossings --- that is, the identities of the opposite regions at each crossing are distinct.
Since link diagrams are checkerboard colorable, in a reduced diagram the identities of all four regions around each crossing are distinct.
Kamada in \cite{Kamada2004} calls a virtual link diagram \emph{proper} if, when it is cellularly embedded, the identities of all four regions around each crossing are distinct.
The Kauffman--Murasugi--Thistlethwaite theorem says that a connected reduced alternating (classical) link diagram $D$ satisfies $\breadth_A\langle D\rangle=4c(D)$.
Kamada shows that if $D$ is a connected proper alternating virtual link diagram, then $\breadth_A\langle D\rangle =4(c(D)-g(D))$.

\begin{figure}[tb]
  \centering
  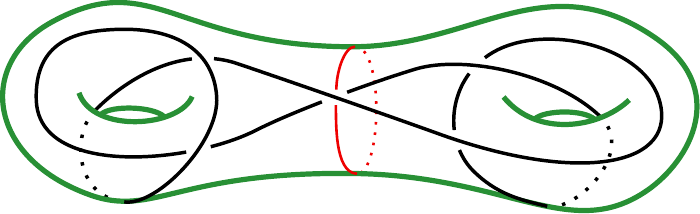
  \caption{A non-reduced surface knot diagram on a genus-$2$ surface. The red loop demonstrates that the diagram is not reduced since it is a separating loop that intersects the knot through a single crossing.}
  \label{fig:non-reduced}
\end{figure}

A generalization of this result is in \cite{Boden2019}, where they define a \emph{reduced} surface link diagram to be one that is cellularly embedded such that there are no separating simple closed curves in the surface that intersect the diagram through a single crossing (see \Cref{fig:non-reduced}).
Equivalently, a cellularly embedded surface link diagram is reduced if, whenever the two regions opposite a crossing are identical, then the surface does not become disconnected if one removes the interior of that region along with a neighborhood of the crossing.
They define the \emph{homological Kauffman bracket} by weighting each state of the Kauffman bracket using homological information in a Krushkal-polynomial-like manner.
With it they prove two of the Tait conjectures generalized to virtual links: if $D_1$ and $D_2$ are reduced connected alternating virtual link diagrams for the same virtual link $L$, then $c(D_1)=c(D_2)=c(L)$ and $\writhe(D_1)=\writhe(D_2)$.

\begin{figure}[tb]
  \centering
\begingroup%
  \makeatletter%
  \providecommand\color[2][]{%
    \errmessage{(Inkscape) Color is used for the text in Inkscape, but the package 'color.sty' is not loaded}%
    \renewcommand\color[2][]{}%
  }%
  \providecommand\transparent[1]{%
    \errmessage{(Inkscape) Transparency is used (non-zero) for the text in Inkscape, but the package 'transparent.sty' is not loaded}%
    \renewcommand\transparent[1]{}%
  }%
  \providecommand\rotatebox[2]{#2}%
  \newcommand*\fsize{\dimexpr\f@size pt\relax}%
  \newcommand*\lineheight[1]{\fontsize{\fsize}{#1\fsize}\selectfont}%
  \ifx\svgwidth\undefined%
    \setlength{\unitlength}{121.67412562bp}%
    \ifx\svgscale\undefined%
      \relax%
    \else%
      \setlength{\unitlength}{\unitlength * \real{\svgscale}}%
    \fi%
  \else%
    \setlength{\unitlength}{\svgwidth}%
  \fi%
  \global\let\svgwidth\undefined%
  \global\let\svgscale\undefined%
  \makeatother%
  \begin{picture}(1,0.9685712)%
    \lineheight{1}%
    \setlength\tabcolsep{0pt}%
    \put(0,0){\includegraphics[width=\unitlength,page=1]{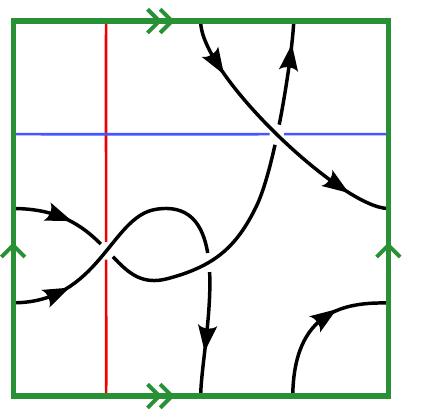}}%
    \put(0.94401442,0.63901953){\makebox(0,0)[lt]{\lineheight{1.25}\smash{\begin{tabular}[t]{l}$b$\end{tabular}}}}%
    \put(0.2345645,0.94361932){\makebox(0,0)[lt]{\lineheight{1.25}\smash{\begin{tabular}[t]{l}$r$\end{tabular}}}}%
  \end{picture}%
\endgroup%

  \caption{A reduced diagram of $3.7$ on a torus. The blue loop demonstrates the virtual knot is not $h$-reduced, and the red and blue loops each demonstrate it is not proper.}
  \label{fig:3.7-loops}
\end{figure}

As a curiosity, we prove an analogue of the Kauffman--Murasugi--Thistlethwaite theorem using the arrow polynomial.
\begin{definition}
  Suppose $G$ is an abelian group.
  A $G$-labeled surface link diagram for a surface link $L$ with label function $\lambda$ is \emph{$h$-reduced} if every simple closed curve $C$ that intersects the diagram through a single crossing has $h_{L,\lambda}([C])\neq 0$.
\end{definition}
For example, if the virtual knot in \Cref{fig:3.7-loops} is $1$-labeled, the red loop is allowed in an $h$-reduced diagram but the blue loop is not.
Every proper diagram is $h$-reduced, and every $h$-reduced cellularly embedded diagram is reduced.
Given an abelian group homomorphism $f:G\to G'$, if $(L,f\circ \lambda)$ is $h$-reduced, then $(L,\lambda)$ is $h$-reduced as well.
Hence if a diagram is not $h$-reduced with respect to the identity labeling $\lambda:H_0(L)\to H_0(L)$ then there is no labeling with respect to which it is $h$-reduced.

\begin{definition}
  Let $D$ be a labeled virtual link diagram for a $G$-labeled virtual link $L$ with label function {$\lambda$}, and for a state $S$ of $D$, let $i(S)$ be the number of loops $C\subseteq S$ such that $h_{L,\lambda}([C])=0$ (the number of ``inessential'' state loops).
  The diagram $D$ is called \emph{$A$-$h$-adequate} if for every state $S'$ with $b(S')=1$, then $i(S')\leq i(S_A)$.
  Similarly, $D$ is called \emph{$B$-$h$-adequate} if for every state $S'$ with $a(S')=1$, then $i(S')\leq i(S_B)$.
  If $D$ is both $A$-$h$-adequate and $B$-$h$-adequate, then $D$ is called \emph{$h$-adequate}.
\end{definition}

\begin{proposition}
  Let $D$ be an $h$-reduced $G$-labeled alternating cellularly embedded diagram.
  Then $D$ is $h$-adequate.
\end{proposition}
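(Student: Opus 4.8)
The plan is to reduce to the standard description of the all‑$A$ and all‑$B$ states of an alternating cellularly embedded diagram, and to invoke $h$‑reducedness at precisely the point where the classical argument uses reducedness.

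First I would fix a checkerboard coloring $\chi$ of $D$, which exists since $D$ is alternating (\Cref{thm:alt-checkerboard}), and recall the standard fact that for an alternating diagram the $A$‑smoothings are coherent with $\chi$: after possibly interchanging the two colors, the all‑$A$ state $S_A$ is the disjoint union of the boundary curves of the black regions (pushed slightly into their interiors) and $S_B$ is the disjoint union of the boundary curves of the white regions. This is already implicit in the proof of \Cref{thm:alt-checkerboard}, where the $A$‑state is shown to lie in the complement of $D$. Because $D$ is cellularly embedded, every complementary region is an open disk, so every component of $S_A$ and of $S_B$ bounds a disk in $\Sigma$ and is null-homologous; in particular $h_{L,\lambda}$ vanishes on each such loop, so $i(S_A)=b_0(S_A)$ and $i(S_B)=b_0(S_B)$.

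Then I would verify $A$‑$h$‑adequacy, the argument for $B$‑$h$‑adequacy being symmetric (interchange the colors, i.e. replace $S_A$ by $S_B$). Let $S'$ be a state with $b(S')=1$, so $S'$ is obtained from $S_A$ by switching a single crossing $x$ from its $A$‑smoothing to its $B$‑smoothing, and let $R_1,R_2$ be the (possibly equal) black regions at the two black quadrants of $x$; near $x$ the state $S_A$ consists of an arc of $\partial R_1$ and an arc of $\partial R_2$, which the switch reconnects the other way. If $R_1\neq R_2$, the switch merges the two distinct null-homologous loops $\partial R_1,\partial R_2$ into a single loop of class $[\partial R_1]+[\partial R_2]=0$; then $b_0(S')=b_0(S_A)-1$, all loops of $S'$ are still null-homologous, and $i(S')=b_0(S_A)-1<i(S_A)$. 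If instead $R_1=R_2=:R$, the switch splits the single loop $\partial R$ into two loops $C_1,C_2$ with $[C_1]+[C_2]=[\partial R]=0$, hence $[C_1]=-[C_2]$ and $h_{L,\lambda}([C_1])=\pm h_{L,\lambda}([C_2])$. To see both are nonzero I would build the auxiliary simple closed curve realizing the single-crossing hypothesis of $h$‑reducedness: take an arc in the interior of the disk $R$ joining the two black quadrants at $x$ and close it by a short arc through the crossing point, obtaining a simple closed curve $C$ meeting $D$ only at $x$. A small isotopy (pushing the closure arc off the crossing point and sliding the interior arc onto $\partial R$) identifies $C$ with $C_1$ up to isotopy, so $[C]=\pm[C_1]$ in $H_1(\Sigma)$; since $D$ is $h$‑reduced, $h_{L,\lambda}([C])\neq 0$, hence $h_{L,\lambda}([C_1])\neq 0$ and $h_{L,\lambda}([C_2])\neq 0$, so $C_1,C_2$ are both essential. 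Therefore $i(S')=b_0(S')-2=(b_0(S_A)+1)-2=b_0(S_A)-1<i(S_A)$. In either case $i(S')\le i(S_A)$, so $D$ is $A$‑$h$‑adequate, and symmetrically $B$‑$h$‑adequate, hence $h$‑adequate.

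The bookkeeping with $b_0$ and the checkerboard description of $S_A,S_B$ is routine. The one genuinely delicate point, and what I expect to be the main obstacle, is the homological identification $[C]=\pm[C_1]$ in the split case: one must nail down the local picture at $x$ and verify that the short arc used to close $C$ and the short arc produced by the $B$‑smoothing cobound a bigon inside a neighborhood of the crossing, and that the interior arc in $R$ can be isotoped onto one of the two sub-arcs into which the $B$‑smoothing cuts $\partial R$. Once this local verification is in place, the rest of the argument is formal.
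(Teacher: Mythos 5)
Your proposal is correct and takes essentially the same route as the paper: identify the $S_A$ loops with push-offs of the boundaries of the black disk regions so that $i(S_A)=b_0(S_A)$, treat a single $B$-smoothing via the merge/split dichotomy, and in the split case apply $h$-reducedness to a simple closed curve lying in the resulting black annulus and meeting the diagram only at that crossing. The only (cosmetic) difference is that you certify both split loops are essential by the homological identification $[C_1]=-[C_2]=\pm[C]$ with that core curve, where the paper instead reads off their whisker labels $\pm(a+b)$ or $\pm(-a+b)$ from its alternative expansion; incidentally your count $i(S')=i(S_A)-1$ in the split case is the accurate one (the paper writes $i(S_A)-2$), which does not affect the inequality $i(S')\leq i(S_A)$.
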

\begin{proof}
  Since the diagram is alternating {and cellularly embedded}, the $A$-state $S_A$ consists of simple closed curves that bound disks in the surface, as in \Cref{thm:alt-checkerboard}.
  Checkerboard-colored diagrams have an alternative whisker expansion for the homological arrow polynomial from pushing state curves away from crossings into the black region, where near crossings whiskers only appear when the crossing becomes contained within the black region (see \Cref{fig:altern-whisker-expansion}).
  By recoloring as needed, we may assume the $S_A$ state loops bound black disks and that every black disk is disjoint from the crossings, hence there are no whiskers and each loop is nullhomotopic.
  Thus $i(S_A)=b_0(S_A)$.

  Let $S'$ be a state with $b(S')=1$, and let $c$ refer to the $B$-smoothed crossing.
  This introduces four whiskers, and, by consideration of the checkerboard coloring, either two state loops merge into one ($b_0(S')=b_0(S_A)-1$), or one state loop splits into two ($b_0(S')=b_0(S_A)+1$).
  In the case where two state loops merge into one, we have that
  \[ i(S')\leq b_0(S')=b_0(S_A)-1 < b_0(S_A)= i(S_A). \]
  In the case where a state loop splits into two, then, since every crossing is in the white region of $S_A$, the loop bounds a black disk that in $S'$ becomes an annulus:
  \[
\begingroup%
  \makeatletter%
  \providecommand\color[2][]{%
    \errmessage{(Inkscape) Color is used for the text in Inkscape, but the package 'color.sty' is not loaded}%
    \renewcommand\color[2][]{}%
  }%
  \providecommand\transparent[1]{%
    \errmessage{(Inkscape) Transparency is used (non-zero) for the text in Inkscape, but the package 'transparent.sty' is not loaded}%
    \renewcommand\transparent[1]{}%
  }%
  \providecommand\rotatebox[2]{#2}%
  \newcommand*\fsize{\dimexpr\f@size pt\relax}%
  \newcommand*\lineheight[1]{\fontsize{\fsize}{#1\fsize}\selectfont}%
  \ifx\svgwidth\undefined%
    \setlength{\unitlength}{223.50329011bp}%
    \ifx\svgscale\undefined%
      \relax%
    \else%
      \setlength{\unitlength}{\unitlength * \real{\svgscale}}%
    \fi%
  \else%
    \setlength{\unitlength}{\svgwidth}%
  \fi%
  \global\let\svgwidth\undefined%
  \global\let\svgscale\undefined%
  \makeatother%
  \begin{picture}(1,0.51675193)%
    \lineheight{1}%
    \setlength\tabcolsep{0pt}%
    \put(0,0){\includegraphics[width=\unitlength,page=1]{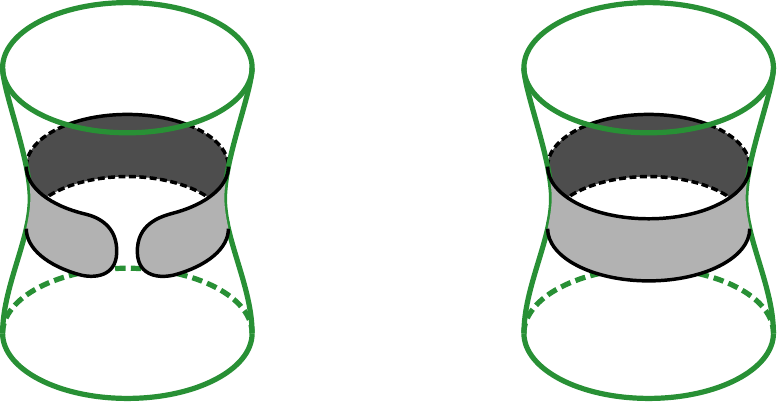}}%
    \put(0.41192652,0.25054016){\makebox(0,0)[lt]{\lineheight{1.25}\smash{\begin{tabular}[t]{l}$A\longleftrightarrow B$\end{tabular}}}}%
  \end{picture}%
\endgroup%

  \]
  Thus, there is a simple closed curve $C$ contained in this annulus that intersects the diagram only through $c$.
  There are only two relevant possibilities for $c$ from \Cref{fig:altern-whisker-expansion} that go from a whisker-free $A$ smoothing to a $B$ smoothing that joins the black region to itself:
  \[
\begingroup%
  \makeatletter%
  \providecommand\color[2][]{%
    \errmessage{(Inkscape) Color is used for the text in Inkscape, but the package 'color.sty' is not loaded}%
    \renewcommand\color[2][]{}%
  }%
  \providecommand\transparent[1]{%
    \errmessage{(Inkscape) Transparency is used (non-zero) for the text in Inkscape, but the package 'transparent.sty' is not loaded}%
    \renewcommand\transparent[1]{}%
  }%
  \providecommand\rotatebox[2]{#2}%
  \newcommand*\fsize{\dimexpr\f@size pt\relax}%
  \newcommand*\lineheight[1]{\fontsize{\fsize}{#1\fsize}\selectfont}%
  \ifx\svgwidth\undefined%
    \setlength{\unitlength}{215.17900006bp}%
    \ifx\svgscale\undefined%
      \relax%
    \else%
      \setlength{\unitlength}{\unitlength * \real{\svgscale}}%
    \fi%
  \else%
    \setlength{\unitlength}{\svgwidth}%
  \fi%
  \global\let\svgwidth\undefined%
  \global\let\svgscale\undefined%
  \makeatother%
  \begin{picture}(1,0.43034949)%
    \lineheight{1}%
    \setlength\tabcolsep{0pt}%
    \put(0,0){\includegraphics[width=\unitlength,page=1]{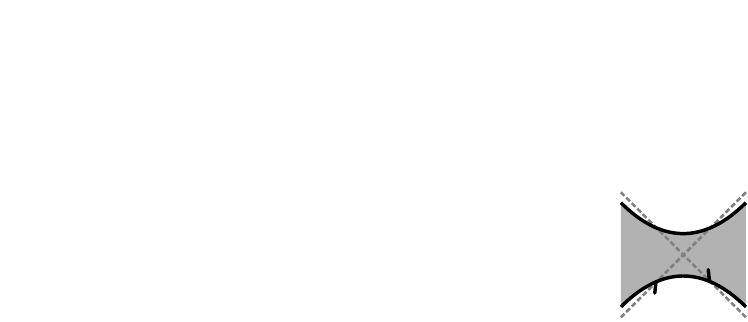}}%
    \put(0.87000363,0.00592018){\makebox(0,0)[lt]{\lineheight{1.25}\smash{\begin{tabular}[t]{l}$a$\end{tabular}}}}%
    \put(0.93864694,0.00592018){\makebox(0,0)[lt]{\lineheight{1.25}\smash{\begin{tabular}[t]{l}$b$\end{tabular}}}}%
    \put(0,0){\includegraphics[width=\unitlength,page=2]{loop-split-cases.pdf}}%
    \put(0.86965809,0.14752869){\makebox(0,0)[lt]{\lineheight{1.25}\smash{\begin{tabular}[t]{l}$b$\end{tabular}}}}%
    \put(0.93909188,0.14752004){\makebox(0,0)[lt]{\lineheight{1.25}\smash{\begin{tabular}[t]{l}$a$\end{tabular}}}}%
    \put(0,0){\includegraphics[width=\unitlength,page=3]{loop-split-cases.pdf}}%
    \put(0.10737795,0.1347121){\makebox(0,0)[lt]{\lineheight{1.25}\smash{\begin{tabular}[t]{l}$a$\end{tabular}}}}%
    \put(0.10635556,0.00990874){\makebox(0,0)[lt]{\lineheight{1.25}\smash{\begin{tabular}[t]{l}$b$\end{tabular}}}}%
    \put(0,0){\includegraphics[width=\unitlength,page=4]{loop-split-cases.pdf}}%
    \put(0.14382082,0.29208214){\makebox(0,0)[lt]{\lineheight{1.25}\smash{\begin{tabular}[t]{l}$a$\end{tabular}}}}%
    \put(0.01106128,0.29208214){\makebox(0,0)[lt]{\lineheight{1.25}\smash{\begin{tabular}[t]{l}$b$\end{tabular}}}}%
    \put(0.27972841,0.32865497){\makebox(0,0)[lt]{\lineheight{1.25}\smash{\begin{tabular}[t]{l}$:$\end{tabular}}}}%
    \put(0.60962243,0.32865497){\makebox(0,0)[lt]{\lineheight{1.25}\smash{\begin{tabular}[t]{l}$A\longleftrightarrow B$\end{tabular}}}}%
    \put(0.27972841,0.07694698){\makebox(0,0)[lt]{\lineheight{1.25}\smash{\begin{tabular}[t]{l}$:$\end{tabular}}}}%
    \put(0.60962243,0.07694698){\makebox(0,0)[lt]{\lineheight{1.25}\smash{\begin{tabular}[t]{l}$A\longleftrightarrow B$\end{tabular}}}}%
    \put(0,0){\includegraphics[width=\unitlength,page=5]{loop-split-cases.pdf}}%
    \put(0.86485374,0.40230824){\makebox(0,0)[lt]{\lineheight{1.25}\smash{\begin{tabular}[t]{l}$a$\end{tabular}}}}%
    \put(0.93719434,0.40222726){\makebox(0,0)[lt]{\lineheight{1.25}\smash{\begin{tabular}[t]{l}$b$\end{tabular}}}}%
    \put(0,0){\includegraphics[width=\unitlength,page=6]{loop-split-cases.pdf}}%
    \put(0.8722855,0.26007462){\makebox(0,0)[lt]{\lineheight{1.25}\smash{\begin{tabular}[t]{l}$b$\end{tabular}}}}%
    \put(0.93512423,0.25986894){\makebox(0,0)[lt]{\lineheight{1.25}\smash{\begin{tabular}[t]{l}$a$\end{tabular}}}}%
    \put(0,0){\includegraphics[width=\unitlength,page=7]{loop-split-cases.pdf}}%
  \end{picture}%
\endgroup%

  \]
  Hence, both state loops associated to $c$ in the $B$ smoothing get an $a+b$ whisker in the first possibility and a $-a+b$ whisker in the second.
  Additionally, in the first possibility we have $h_{L,\lambda}(\pm[C])=\pm(a+b)$ and in the second $h_{L,\lambda}(\pm[C])=\pm(-a+b)$, and since the diagram is $h$-reduced, we have $a+b\neq 0$ in the first possibility and $-a+b\neq 0$ in the second.
  Thus, $i(S')=i(S_A)-2< i(S_A)$.

  Since in both cases we have shown that $i(S')\leq i(S_A)$, we have proved that the diagram is $A$-$h$-adequate.
  By a similar argument, the diagram is also $B$-$h$-adequate, and thus $h$-adequate.
\end{proof}

\begin{figure}[tb]
  \centering
  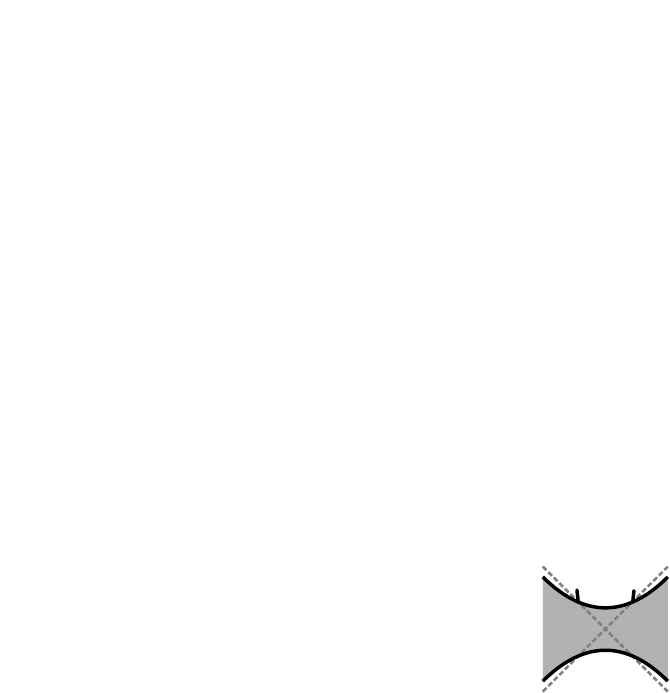
  \caption{Alternative whisker expansion for checkerboard colorable surface links by pushing state curves into the black regions where possible.
  As usual, $A$ smoothings have an $A$ coefficient and $B$ smoothings have an $A^{-1}$ coefficient.}
  \label{fig:altern-whisker-expansion}
\end{figure}

Let $\mathcal{A}'(L)$ be the result of taking $\mathcal{A}(L)$ and substituting $X_{\pm I}\mapsto X_{\pm I}(-A^2-A^{-2})^{-1}$ for each nonzero index $I$, which yields a polynomial in $R_n$.
As a state sum, this polynomial is given by
\[   \mathcal{A}'(L) = \sum_S A^{a(S)-b(S)}(-A^2-A^{-2})^{i(S)} \prod_{C\in S} X_{\pm h_{L,\lambda}([C])}, \]
with everything meaning the same as in the state sum right before \Cref{def:homological-arrow-poly}.

Considering a nonzero $p\in R_n$ as a polynomial in $A$, let $\maxdeg_Ap$ and $\mindeg_Ap$ respectively denote the maximum and minimum degrees of $A$ appearing in monomials of $p$, and let $\breadth_Ap=\maxdeg_Ap-\mindeg_Ap$.

\begin{proposition}
  Let $D$ be an $h$-adequate $G$-labeled virtual link diagram for a virtual link $L$.
  Then $\breadth_A(\mathcal{A}'(L))=2c(D) + 2i(S_A) + 2i(S_B)$.
\end{proposition}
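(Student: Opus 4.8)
The plan is to run the extremal-state argument behind the Kauffman--Murasugi--Thistlethwaite theorem, using the given state-sum formula for $\mathcal{A}'(L)$ while keeping the $X$-variables present so that non-cancellation of the extreme terms becomes transparent. Write $c = c(D)$ and note $a(S)+b(S) = c$ for every state $S$. The term of the state sum contributed by $S$ is $A^{2a(S)-c}(-A^2-A^{-2})^{i(S)}\prod_{C\in S}X_{\pm h_{L,\lambda}([C])}$; since inessential loops contribute only factors of $-A^2-A^{-2}$, its maximal $A$-degree is $2a(S)-c+2i(S)$ and its minimal $A$-degree is $2a(S)-c-2i(S)$, in both cases with coefficient equal to the nonzero monomial $(-1)^{i(S)}\prod_{\text{essential }C\in S}X_{\pm h_{L,\lambda}([C])}$. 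Put $f(S) = a(S)+i(S)$ and $g(S) = b(S)+i(S)$, so these extreme $A$-degrees are $2f(S)-c$ and $c-2g(S)$ respectively.

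The core of the argument is to show $f$ is uniquely maximized at $S_A$ and $g$ is uniquely maximized at $S_B$. The local input is that changing a single smoothing in a state either merges two state loops $D_1,D_2$ into one loop $D$ or splits one loop $D$ into $D_1\sqcup D_2$, and in either case $[D] = \pm[D_1]\pm[D_2]$ in $H_1(\Sigma)$ (the curves differ only inside a disk about the crossing), so $h_{L,\lambda}([D]) = \pm h_{L,\lambda}([D_1])\pm h_{L,\lambda}([D_2])$ for suitable signs. A brief case check then shows the number of inessential loops $i$ changes by at most $1$ under any single smoothing change. Hence, along a path from an arbitrary state $S$ to $S_A$ that changes $B$-smoothings to $A$-smoothings one at a time, each step raises $a$ by $1$ and drops $i$ by at most $1$, so $f$ is non-decreasing; and if $b(S)\geq 1$, the final step of such a path goes from a state $S'$ with $b(S')=1$ to $S_A$, where $A$-$h$-adequacy gives $i(S')\leq i(S_A)$, hence $f(S') = (c-1)+i(S')\leq f(S_A)-1 < f(S_A)$, which together with monotonicity gives $f(S)<f(S_A)$. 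Thus $S_A$ is the unique maximizer of $f$; the symmetric argument using $B$-$h$-adequacy shows $S_B$ is the unique maximizer of $g$.

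It then follows that the coefficient of $A^{2f(S_A)-c} = A^{c+2i(S_A)}$ in $\mathcal{A}'(L)$ is supplied by $S_A$ alone, and equals the nonzero monomial $(-1)^{i(S_A)}\prod_{\text{essential }C\in S_A}X_{\pm h_{L,\lambda}([C])}$; hence $\maxdeg_A\mathcal{A}'(L) = c+2i(S_A)$. Dually, $\mindeg_A\mathcal{A}'(L) = c-2g(S_B) = -c-2i(S_B)$, coming from $S_B$ alone. Subtracting yields $\breadth_A\mathcal{A}'(L) = 2c+2i(S_A)+2i(S_B)$, as claimed. No connectedness hypothesis is needed, since for a split diagram each of $\mathcal{A}'$, $c$, $i(S_A)$, $i(S_B)$ is additive over the components.

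I expect the main obstacle to be the local case analysis in the second step, namely verifying cleanly that merging or splitting state loops changes the count of inessential loops by at most one. This requires being careful with the $\pm$ ambiguity of $h_{L,\lambda}$ on unoriented homology classes and observing that $[D] = \pm[D_1]\pm[D_2]$ forces: a split of an essential loop cannot produce two inessential loops, a split of an inessential loop produces either two inessential or two essential loops, and symmetrically for merges. Once this is in place the rest is the standard extremal-state argument, and the presence of the $X$-variables actually eases the non-vanishing step, since each extreme $A$-degree is realized by a single state, hence by a single nonzero monomial with coefficient $\pm 1$.
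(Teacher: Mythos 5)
Your proposal is correct and is essentially the argument the paper compresses into the phrase ``by the usual argument'': compare the extreme $A$-degrees $2(a(S)+i(S))-c$ and $-(2(b(S)+i(S))-c)$ of each state's contribution, show $a+i$ is uniquely maximized at $S_A$ (and $b+i$ at $S_B$) by moving along a switching path and invoking $h$-adequacy at the last step, and note that the extremal coefficients are nonzero monomials in the $X$-variables, so no cancellation occurs. One inaccuracy in your local lemma: on a thickened surface a single smoothing change need not merge or split state loops. If the two strands at the crossing belong to the same loop and the two arcs outside the crossing disk join its four endpoints in the pattern that would have to cross in the plane (possible in positive genus --- the one-crossing diagram of the virtual Hopf link already exhibits this, since both of its states consist of a single essential loop), then the re-splice takes one loop to one loop, and neither the merge/split dichotomy nor the relation $[D]=\pm[D_1]\pm[D_2]$ applies. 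This omission is harmless: in that third case a single loop is replaced by a single loop, so $\lvert\Delta i\rvert\le 1$ holds trivially, and the monotonicity of $a+i$ and $b+i$ along switching paths --- the only place the case analysis is used --- goes through unchanged.
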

\begin{proof}
  By the usual argument, if $D$ is $A$-$h$-adequate then nothing cancels out the $A$-state's contribution to $\maxdeg_A(\mathcal{A}'(L))$ or the $B$-state's contribution to $\mindeg_A(\mathcal{A}'(L))$.
  The maximal $A$-degree from the term for the $A$-state $S_A$ is $a(S_A)-b(S_A)+2i(S_A)=c(D)+2i(S_A)$.
  Similarly, the minimal $A$-degree of the term for the $B$-state $S_B$ is $a(S_B)-b(S_B)-2i(S_B)=-c(D)-2i(S_B)$.
  Hence, $\breadth_A(\mathcal{A}'(L))=2c(D) + 2i(S_A) + 2i(S_B)$.
\end{proof}

\begin{corollary}
  Let $D$ be an $h$-reduced connected alternating $G$-labeled virtual diagram for a virtual link $L$.  Then $\breadth_A(\mathcal{A}'(L)) = 4c(D) - 4g(D) + 4$, where $g(D)$ is the genus of the surface that $D$ cellularly embeds into.
\end{corollary}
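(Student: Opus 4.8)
The plan is to deduce this from the two preceding propositions together with the classical state-circle count for alternating diagrams. First I would observe that the hypotheses include $D$ being cellularly embedded (implicit, since $g(D)$ is defined as the genus of the surface into which $D$ cellularly embeds), so by the proposition that $h$-reduced alternating cellularly embedded diagrams are $h$-adequate, $D$ is $h$-adequate, and the preceding breadth formula applies: $\breadth_A(\mathcal{A}'(L)) = 2c(D) + 2i(S_A) + 2i(S_B)$. It then suffices to compute $i(S_A) + i(S_B)$.

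Next, as established in the proof of the $h$-adequacy proposition (and, for $S_B$, by the symmetric argument obtained from mirroring), since $D$ is alternating and cellularly embedded every loop of $S_A$ and every loop of $S_B$ is a pushed-off boundary of a checkerboard region and hence bounds a disk in $\Sigma$; such a loop $C$ is nullhomotopic, so $h_{L,\lambda}([C]) = 0$ and $C$ is inessential. Therefore $i(S_A) = b_0(S_A)$ and $i(S_B) = b_0(S_B)$, and it remains to compute $b_0(S_A) + b_0(S_B)$.

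Here I would invoke the surface version of the fact underlying the Kauffman--Murasugi--Thistlethwaite theorem: for an alternating cellularly embedded diagram the loops of $S_A$ are in bijection with the complementary regions of one checkerboard colour and the loops of $S_B$ with the regions of the other colour (each region being a disk by cellularity, consistent with \Cref{thm:alt-checkerboard}), so $b_0(S_A) + b_0(S_B)$ equals the total number $F$ of regions of $D$ in $\Sigma$. Applying Euler's formula to the connected $4$-valent graph underlying $D$, which has $c(D)$ vertices and $2c(D)$ edges, cellularly embedded in the closed genus-$g(D)$ surface, gives $c(D) - 2c(D) + F = 2 - 2g(D)$, i.e.\ $F = c(D) + 2 - 2g(D)$. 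Substituting, $\breadth_A(\mathcal{A}'(L)) = 2c(D) + 2F = 4c(D) - 4g(D) + 4$, as claimed.

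The only step that is more than bookkeeping is the identity $b_0(S_A) + b_0(S_B) = F$ in the surface setting: one must check that the correspondence between state loops and checkerboard regions used in the alternating case genuinely gives a bijection of $S_A$-loops with one colour class and of $S_B$-loops with the other, so that every region of $\Sigma - D$ is counted exactly once. Granting that, the corollary follows immediately from the two cited propositions and the Euler characteristic computation.
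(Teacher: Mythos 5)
Your proposal is correct and follows essentially the same route as the paper: apply the $h$-adequacy proposition to get the breadth formula $2c(D)+2i(S_A)+2i(S_B)$, identify the loops of $S_A$ and $S_B$ with the black and white checkerboard disk regions so that $i(S_A)+i(S_B)$ is the total number of faces, and then use the Euler characteristic of the cellular decomposition with $c(D)$ vertices and $2c(D)$ edges. The one step you flag as needing care (state loops of $S_A$ matching one colour class and those of $S_B$ the other) is exactly the identification the paper makes when it declares the black and white disks to be the $2$-cells.
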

\begin{proof}
  Let $\Sigma$ be the connected surface that $D$ cellularly embeds into.
  Since $D$ is alternating there is a checkerboard coloring, and by swapping colors if needed we may assume that in the $S_A$ state each loop bounds a black disk and in the $S_B$ state each loop bounds a white disk.
  We have that $i(S_A)$ is the number of black disks and $i(S_B)$ is the number of white disks.
  Using the crossings as $0$-cells, the edges between them as $1$-cells, and the white and black disks as $2$-cells, we obtain a cellular decomposition of $\Sigma$.
  Since $D$ is a $4$-regular graph, then by Euler characteristics we have that $2-2g(\Sigma) = c(D)-2c(D)+(i(S_A)+i(S_B))$.
  {Substituting $i(S_A)+i(S_B)=2-2g(\Sigma)+c(D)$ into the previous proposition completes the proof.}
\end{proof}

\subsection{Connect sums}

A virtual link $L$ is called a \emph{connect sum} if there is a representative surface link $L\subset\Sigma\times I$ and a vertical annulus $A\subset\Sigma\times I$ such that $A$ separates $\Sigma\times I$, $A$ meets $L$ transversely, and $\abs{L\cap A}=2$.
A connect sum has a diagram $D$ in a surface $\Sigma$ such that there exists a separating circle $C\subset\Sigma$ avoiding the crossings of $D$ that intersects the arcs of $D$ transversely in two points.
This circle represents $\Sigma$ as a connect sum $\Sigma_1\csum\Sigma_2$, and by cutting $D$ apart along $C$, putting the respective pieces on $\Sigma_1$ and $\Sigma_2$, and connecting the endpoints of the cut arcs by trivial arcs, one obtains two surface link diagrams $D_1$ and $D_2$ in $\Sigma_1$ and $\Sigma_2$, respectively.
If $L_1$ and $L_2$ are the respective corresponding virtual links, then we say $L=L_1\csum L_2$.

Unlike for connect sums of oriented knots, the connect sum of virtual knots is not a well-defined operation. The notation is only meant to signify that these three virtual links stand in this relation.
For example, the nontrivial virtual knot in \Cref{fig:non-reduced} is a connect sum of two unknots, where perturbing the red loop to avoid the crossing yields the necessary separating circle for the decomposition.
Another connect sum of two unknots that is not equivalent to this one is Kishino's knot (\Cref{fig:kishino}).

The virtual Kauffman bracket satisfies the relation $\langle L\rangle = \langle L_1\rangle \langle L_2\rangle$, but the arrow polynomial cannot satisfy such a relation in general.
For example, the virtual knot in \Cref{fig:non-reduced} and Kishino's knot both have nontrivial (and unequal) arrow polynomials.
However, there is still such a relation in certain cases.
Note that the $-A^2-A^{-2}$ factor that will appear is from the fact that we do not normalize the homological arrow polynomial to be $1$ for the unknot.

Given a surface link $L\subset\Sigma\times I$ with a connect sum annulus $A\subset\Sigma\times I$, then after choosing an arc in $A$ between the two points of $A\cap L$ we can produce two surface links $L_1$ and $L_2$ in $\Sigma\times I-A$ by using the arc to perform a saddle move on $L$.
These links have the property that $[L]=[L_1]+[L_2]$ in $H_1(\Sigma\times I)$.

\begin{proposition}
  \label{thm:connect-sum}
  Let $L\subset\Sigma\times I$ be a framed oriented surface link that is a connect sum, and, after choosing an arc in the connect sum annulus, let $L_1$ and $L_2$ be the framed oriented surface links in $\Sigma\times I$ as above.
  If $L$ is $1$-labeled and $L_2$ is $\Z$-\nullhomologous{}, then
  \begin{equation*}
    (-A^2-A^{-2})\mathcal{A}(L)=\mathcal{A}(L_1)\mathcal{A}(L_2).
  \end{equation*}
  In particular, this equation holds if the connect sum annulus bounds a ball in $\Sigma\times I$.
\end{proposition}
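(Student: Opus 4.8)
The plan is to expand both sides via the state-sum formula for the homological arrow polynomial (as in \Cref{thm:sfc-poly-1} and its proof) applied to a diagram adapted to the connect sum. First I would fix a diagram $D$ for $L$ on $\Sigma$ together with the separating circle $C \subset \Sigma$ that avoids the crossings of $D$ and meets $D$ transversely in two points, and cut $D$ along $C$: this writes $\Sigma$ as the union of compact subsurfaces $\Sigma_1, \Sigma_2$ meeting along $C$ and produces the diagrams $D_1 \subset \Sigma_1$, $D_2 \subset \Sigma_2$ for $L_1$, $L_2$, each obtained by joining the two cut endpoints on its side by a trivial arc $t_1$, resp.\ $t_2$. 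Since $C$ avoids the crossings, the crossings of $D$ are partitioned between $D_1$ and $D_2$, so a state $S$ of $D$ is exactly a pair $(S_1, S_2)$ of states of $D_1$ and $D_2$. Writing $C_i^*$ for the unique state loop of $S_i$ passing through $t_i$, the loops of $S$ are the loops of $S_1$ and $S_2$ except that $C_1^*$ and $C_2^*$ are fused into a single loop $C^*$ (the path of $S_1$ other than $t_1$ joined to the path of $S_2$ other than $t_2$), while every other loop of $S_i$ lies in the interior of $\Sigma_i$. Hence $a(S)=a(S_1)+a(S_2)$, $b(S)=b(S_1)+b(S_2)$, and $b_0(S)=b_0(S_1)+b_0(S_2)-1$.

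The homological input is the identity $h_L = h_{L_1}$ of homomorphisms $H_1(\Sigma) \to \Z$ for the $1$-labelings: since $h_M(\alpha) = \pi_*([M]) \cdot \alpha$ and $\pi_*([L]) = \pi_*([L_1]) + \pi_*([L_2]) = \pi_*([L_1])$, where $L_2$ being $\Z$-\nullhomologous{} forces $\pi_*([L_2]) = 0$ in $H_1(\Sigma)$ (as $\pi$ is a homotopy equivalence). On top of this I would record two purely geometric facts, each an instance of the principle that the algebraic intersection number of two homology classes admitting disjoint representatives vanishes: (a) any state loop $C'$ of $S_2$ other than $C_2^*$ lies in the interior of $\Sigma_2$, which is disjoint from a representative of $\pi_*([L_1])$ pushed into the interior of $\Sigma_1$, so $h_{L_1}([C']) = 0$; and (b) the $1$-cycle $[C^*] - [C_1^*]$ is, after isotoping $t_1$ across $C$, supported in $\Sigma_2$, hence $h_{L_1}([C^*]) = h_{L_1}([C_1^*])$. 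Together with $h_L = h_{L_1}$, this makes the product over the loops of $S$ collapse: the loops of $S_2$ other than $C_2^*$ each contribute $X_0 = 1$, the loop $C^*$ contributes $X_{\pm h_{L_1}([C_1^*])}$, so $\prod_{C'\in S} X_{\pm h_L([C'])} = \prod_{C'\in S_1} X_{\pm h_{L_1}([C'])}$.

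With these ingredients the state sum for $\mathcal{A}(L)$ factors over the pairs $(S_1, S_2)$. Substituting the formulas for $a(S), b(S), b_0(S)$ and pulling out the $(-A^2-A^{-2})^{-1}$ coming from the $-1$ in $b_0(S) = b_0(S_1)+b_0(S_2)-1$, one gets $\mathcal{A}(L)$ equal to $\big(\sum_{S_1} A^{a(S_1)-b(S_1)}(-A^2-A^{-2})^{b_0(S_1)}\prod_{C'\in S_1} X_{\pm h_{L_1}([C'])}\big)$ times $(-A^2-A^{-2})^{-1}\big(\sum_{S_2} A^{a(S_2)-b(S_2)}(-A^2-A^{-2})^{b_0(S_2)}\big)$. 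The first bracket is $\mathcal{A}(L_1)$, and since $h_{L_2}\equiv 0$ the second bracket is $(-A^2-A^{-2})^{-1}\mathcal{A}(L_2)$; multiplying through by $-A^2-A^{-2}$ gives the claimed identity. For the last sentence: if the connect sum annulus $A$ bounds a ball $B$, choose the arc so that $L_2$ is the part of the saddled link lying on the $B$-side; then $L_2$ lies in the ball $B\cup A$, so $[L_2]=0$ in $H_1(\Sigma\times I)$ and the hypothesis applies.

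I expect the only real obstacle to be the bookkeeping in the first paragraph — pinning down the fused loop $C^*$ and checking that $S \leftrightarrow (S_1,S_2)$ behaves exactly as stated — together with the verification that $[C^*]-[C_1^*]$ can be pushed into $\Sigma_2$; the algebra is then a routine factorization, entirely parallel to the proof that $\mathcal{A}$ is multiplicative under disjoint union.
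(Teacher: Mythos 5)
Your proof is correct and follows essentially the same route as the paper: factor the state sum across the connect-sum circle, use $[L_2]=0$ in $H_1(\Sigma\times I)$ to get $h_L=h_{L_1}$ and kill all homological weights on the $L_2$ side, and identify the $L_2$-side sum with $\mathcal{A}(L_2)/(-A^2-A^{-2})$. The only (harmless) difference is presentational: the paper does a partial expansion of the $L_2$-side crossings and quotes that the leftover link is virtually equivalent to $L_1$, whereas you expand both sides fully and check directly that the fused loop $C^*$ carries the same class as $C_1^*$ under $h_{L_1}$.
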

\begin{proof}
  Since we assume $L_2$ is $\Z$-\nullhomologous{} in $\Sigma\times I$, then $[L]=[L_1]$, and so when computing the arrow polynomial we can omit all whiskers that come from the $L_2$ side of $L$.
  Consider the following state sum for a partial expansion of $\mathcal{A}(L)$, where $s$ ranges over states from smoothing crossings on the $L_2$ side, $\ell(s)$ counts the number of state loops that form on the $L_2$ side, and $L_s$ is the surface link from applying the smoothings and removing all the state loops that form on the $L_2$ side:
  \[
    \mathcal{A}(L) = \sum_s A^{a(s)-b(s)} (-A^2-A^{-2})^{\ell(s)} \mathcal{A}(L_s).
  \]
  Evidently, each $L_s$ is virtually equivalent to $L_1$ since after removing all the state loops one has a trivial arc on the $L_2$ side.
  Since $\ell(s)$ counts all but one state loop if we were to apply the same smoothings to $L_2$ itself, we have that
  \[
    \mathcal{A}(L) = \mathcal{A}(L_1) \sum_s A^{a(s)-b(s)} (-A^2-A^{-2})^{\ell(s)} = \mathcal{A}(L_1) \langle L_2\rangle.
  \]
  By \Cref{thm:z-nullhomologous-poly} we can then substitute $\langle L_2\rangle = \mathcal{A}(L_2)/(-A^2-A^{-2})$, completing the proof.
\end{proof}

\begin{remark}
  It is important that $L_2$ is $\Z$-\nullhomologous{} within the same thickened surface --- the proposition is not true if we only require that $L_2$ be $\Z$-\nullhomologous{} as a virtual link.
  Kishino's knot is a counterexample.
\end{remark}

\begin{remark}
  The hypothesis for the proposition has another equivalent formulation.
  Letting $A$ be the connect sum annulus, let $M_1$ and $M_2$ be the two sides of $\Sigma\times I$ after cutting along $A$.
  Then the homological arrow polynomial is multiplicative if $[L]$ is in the image of the map $H_1(M_1)\to H_1(\Sigma\times I)$ induced from the inclusion $M_1\incl \Sigma\times I$.
\end{remark}

\subsection{Mutation}

For a surface link $L\subset\Sigma\times I$, a \emph{Conway annulus} is a vertical annulus $A\subset\Sigma\times I$ that separates $\Sigma\times I$ such that $A$ intersects $L$ transversely in exactly four points.
This corresponds to a separating circle in $\Sigma$ that intersects the diagram for $L$ in exactly four points.

We can cut $\Sigma\times I$ along $A$ (cutting $L$ in four points) to get two \emph{virtual tangles} with four boundary points.
We can then choose a self-diffeomorphism of $A$ that carries the four points to themselves to glue the two virtual tangles back together.
If the diffeomorphism is chosen such that the pieces are glued together with matching orientations and such that the diffeomorphism acts on the four boundary points as an element of the Klein four-group (i.e., it's either the identity or the boundary points are partitioned into two two-element orbits), then we call the resulting surface link a \emph{Conway mutation} of $L\subset \Sigma\times I$.
Regarding the orientation condition, one might imagine $\Sigma\times I$ being embedded in an ambient $\R^3$ and then rotating one of the pieces before regluing.

For a virtual link diagram in the plane, mutation corresponds to finding a disk whose boundary transversely intersects the diagram in four non-crossing points and then applying any composition of the following operation: choose a reflection that switches pairs of these four points, then apply it to the interior of the disk while taking its mirror image (see \Cref{fig:mutation}).
The mirror image is so that it is as if we are picking up the disk and rotating it in $3$-space.

The arrow polynomial is not invariant under mutation.
For example, the mutant virtual knots in \Cref{fig:mutants} (from \cite{Folwaczny2012}) have
\begin{align*}
  \langle K_{3.2}\rangle_{\mathrm{NA}} &=  A^{-8} - A^{-4} + 1 + (-A^{-2} + A^{2}) K_1 \\
  \langle K_{5.632}\rangle_{\mathrm{NA}} &=  -A^{-4} + 1 - A^{-4} K_4 + (A^{-8} + A^{-4}) K_2^2 + (-A^{-2} + A^{2}) K_1.
\end{align*}
Virtual Jones polynomials are invariant under mutation, and, as expected, substituting $K_i=1$ for all $i$ yields identical polynomials.


\begin{figure}[tb]
  \centering
\begingroup%
  \makeatletter%
  \providecommand\color[2][]{%
    \errmessage{(Inkscape) Color is used for the text in Inkscape, but the package 'color.sty' is not loaded}%
    \renewcommand\color[2][]{}%
  }%
  \providecommand\transparent[1]{%
    \errmessage{(Inkscape) Transparency is used (non-zero) for the text in Inkscape, but the package 'transparent.sty' is not loaded}%
    \renewcommand\transparent[1]{}%
  }%
  \providecommand\rotatebox[2]{#2}%
  \newcommand*\fsize{\dimexpr\f@size pt\relax}%
  \newcommand*\lineheight[1]{\fontsize{\fsize}{#1\fsize}\selectfont}%
  \ifx\svgwidth\undefined%
    \setlength{\unitlength}{190.61342227bp}%
    \ifx\svgscale\undefined%
      \relax%
    \else%
      \setlength{\unitlength}{\unitlength * \real{\svgscale}}%
    \fi%
  \else%
    \setlength{\unitlength}{\svgwidth}%
  \fi%
  \global\let\svgwidth\undefined%
  \global\let\svgscale\undefined%
  \makeatother%
  \begin{picture}(1,0.29881173)%
    \lineheight{1}%
    \setlength\tabcolsep{0pt}%
    \put(0,0){\includegraphics[width=\unitlength,page=1]{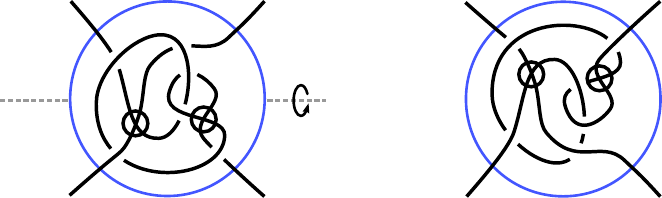}}%
    \put(0.54476664,0.13476508){\makebox(0,0)[lt]{\lineheight{1.25}\smash{\begin{tabular}[t]{l}$\longrightarrow$\end{tabular}}}}%
  \end{picture}%
\endgroup%

  \caption{Example effect of a mutation inside the mutation disk.}
  \label{fig:mutation}
\end{figure}

\begin{figure}[tb]
  \centering
  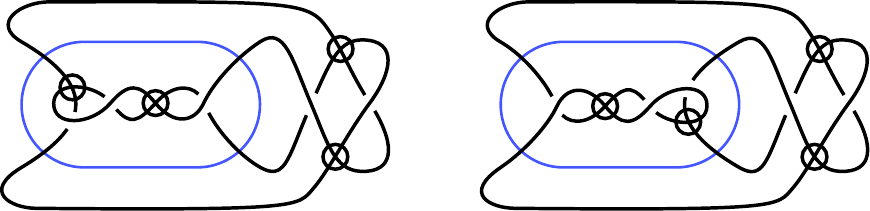
  \caption{Mutant virtual knots (3.2 and 5.632) with different arrow polynomials.
    The mutation disk within the blue loop has been rotated 180 degrees.}
  \label{fig:mutants}
\end{figure}

However, the arrow polynomial is invariant under certain types of mutations.
\begin{proposition}
  Let $C$ be a Conway circle for a diagram of a surface link $L$ in $\Sigma$, and let $\Sigma_1$ and $\Sigma_2$ be the closures of the two pieces of $\Sigma-C$.
  If $\pi_*([L])$ is in the image of the map $H_1(\Sigma_1)\to H_1(\Sigma)$ induced from the inclusion $\Sigma_1\incl\Sigma$, then the arrow polynomial for $L$ is invariant under every mutation with respect to $C$.

  In particular, if $C$ bounds a disk in $\Sigma$ then the arrow polynomial is invariant under mutations with respect to $C$.
\end{proposition}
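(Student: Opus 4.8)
The plan is to prove the statement for the $1$-labeled homological arrow polynomial $\mathcal{A}(L)$ and then invoke \Cref{thm:arrow-poly-from-homol}. Fix a diagram $D$ for $L$ in $\Sigma$ with $C$ avoiding all crossings and meeting $D$ transversely in four points $p_1,\dots,p_4$; write $D=D_1\cup D_2$ with $D_i=D\cap\Sigma_i$, so the mutant is $D^\mu=D_1\cup D_2^\mu$, where $D_2^\mu$ comes from regluing the Conway annulus by the chosen Klein four-group permutation $\sigma$ of the four points. Since $C$ is separating, $H_1(\Sigma_1)\oplus H_1(\Sigma_2)\to H_1(\Sigma)$ is an isomorphism, and the hypothesis says the $H_1(\Sigma_2)$-component of $\pi_*([L])$ is zero; so I can lift $\pi_*([L])$ to $H_1(\Sigma_1)$ and represent it by an embedded oriented multicurve $\gamma$ in the interior of $\Sigma_1$, transverse to all state curves of $D$ and $D^\mu$.

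Next I would describe the mutation concretely: the regluing is realized by replacing $D_2$ with $\psi(D_2)$ for an orientation-preserving diffeomorphism $\psi$ of $\Sigma_2$ which is supported near $C$, restricts to $\sigma$ on $\{p_1,\dots,p_4\}$, and is isotopic to the identity (such a $\psi$ exists because $\sigma$ is isotopic to $\operatorname{id}_C$ as a self-map of the circle). Being isotopic to the identity and supported near $C$, $\psi$ acts trivially on $H_1(\Sigma_2)$, so $\pi_*([L^\mu])$ also has trivial $H_1(\Sigma_2)$-component and equals $\pi_*([L])$; hence $h_{L^\mu}=h_L$ and $\overline{h}_{L^\mu}=\overline{h}_L$. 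In particular, for any state curve $C'$ of either diagram, $h([C'])=\pm(\pi_*([L])\cdot[C'])=\pm(\gamma\cdot C')$, an intersection number which, as $\gamma$ lies in the interior of $\Sigma_1$, only counts points on the portion of $C'$ inside $\Sigma_1$.

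The combinatorial core is then a term-by-term comparison of the state sums behind \Cref{def:homological-arrow-poly}. A state of $D$ is a pair $(S_1,S_2)$ of smoothings of $D_1$ and of $D_2$, and $\psi$ identifies it with the state $(S_1,S_2^\mu)$ of $D^\mu$, where $S_2^\mu=\psi(S_2)$. Because $\psi$ is an orientation-preserving homeomorphism, $a(S)=a(S^\mu)$ and $b(S)=b(S^\mu)$, and $\psi$ carries the closed loops of $S_2$ in $\operatorname{int}\Sigma_2$ bijectively to those of $S_2^\mu$, all of which miss $\gamma$; moreover the closed loops of $S_1$ in $\Sigma_1$ are common to $S$ and $S^\mu$. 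The two arcs of $S_1$ (respectively of $S_2$) join the four points in one of the three perfect matchings, and the arcs of $S_2^\mu$ realize $\sigma$ applied to the matching of $S_2$ --- but the Klein four-group acts trivially on the three perfect matchings of a four-element set, so the matchings realized by $S_2$ and $S_2^\mu$ coincide. Consequently the state curves of $S$ and of $S^\mu$ are built from the same $S_1$-arcs glued according to the same incidence pattern at the $p_i$, differing only in that $S_2$-arcs are replaced by the corresponding $S_2^\mu$-arcs; tracing the loops gives a bijection between the state curves of $S$ and those of $S^\mu$ under which corresponding curves traverse exactly the same $S_1$-arcs in the same directions. Hence $b_0(S)=b_0(S^\mu)$, and since the $S_2$- and $S_2^\mu$-arcs lie in $\Sigma_2$ and never meet $\gamma$, corresponding curves have equal values of $\gamma\cdot(-)$, hence the same $X$-variables. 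Summing, $\mathcal{A}(L)=\mathcal{A}(L^\mu)$, and \Cref{thm:arrow-poly-from-homol} yields $\langle L\rangle_{\mathrm{A}}=\langle L^\mu\rangle_{\mathrm{A}}$. For the last assertion, if $C$ bounds a disk $\Delta$ we take $\Sigma_2=\Delta$, so $H_1(\Sigma_2)=0$, the map $H_1(\Sigma_1)\to H_1(\Sigma)$ is onto, and the hypothesis is automatic.

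The step I expect to be the main obstacle is the bookkeeping just sketched: showing that the bijection between the state curves of $S$ and of $S^\mu$ preserves the unoriented homology classes detected by $\gamma$. The essential inputs are that the Klein four-group fixes each of the three perfect matchings of the four points (so the incidence pattern at $C$ is unchanged), that $\gamma$ can be pushed entirely into the interior of $\Sigma_1$ (so only the fixed $D_1$-side contributes to intersection numbers), and the classical fact that such a mutation leaves the smoothing types and the loop count of each state unchanged; assembling these into a careful comparison of the two state sums, with attention to how each loop traverses its arcs and with what orientations, is the delicate part.
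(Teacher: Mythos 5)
Your homological reductions are sound and close in spirit to the paper's proof (which fixes a representative of $\pi_*([L])$ on the $\Sigma_1$ side so that no whiskers arise on the $\Sigma_2$ side, partially expands the $\Sigma_2$-side crossings, discards closed state loops there, and reduces the $\Sigma_2$ side to a $\Z[A^{\pm 1}]$-combination of the three basic $4$-ended tangles, each of which is invariant under the mutation symmetries). But there is a genuine gap in how you model the mutation itself. You realize the regluing by an orientation-preserving diffeomorphism $\psi$ of $\Sigma_2$, supported near $C$ and \emph{isotopic to the identity}. If such a $\psi$ did the job, then $D_1\cup\psi(D_2)$ would be a diagram of a link isotopic to $L$ (extend the isotopy of $\Sigma_2$ to an ambient isotopy), so your argument would only prove invariance under an operation that never changes the link --- whereas the whole point of the proposition is that mutation can change the link (classical mutant pairs arise from a Conway circle bounding a disk, exactly the ``in particular'' case). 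Conversely, genuine Conway mutation is not of this form: the nontrivial Klein four-group regluings correspond diagrammatically to reflecting the $\Sigma_2$-side tangle and taking its mirror image (``rotating the disk in $3$-space''). For the two reflection-type elements, the required boundary map reverses the orientation of $C$, and no orientation-preserving diffeomorphism of $\Sigma_2$ preserving $C$ and its collar side restricts to an orientation-reversing map of $C$; for the rotation-type element, a $\psi$ isotopic to the identity exists but then, as noted, $L^\mu$ is isotopic to $L$ and nothing is proved. Consequently the steps you hang on $\psi$ --- equality $a(S)=a(S^\mu)$, $b(S)=b(S^\mu)$ ``because $\psi$ is orientation-preserving,'' triviality of the action on $H_1(\Sigma_2)$, and $\pi_*([L^\mu])=\pi_*([L])$ --- are not justified for actual mutations.

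The combinatorial core can be salvaged, but it must be rebuilt around the true operation: check that the reflection-plus-mirror move preserves $A$/$B$ smoothing types at each crossing (reflection swaps them, mirroring swaps them back), handle the possible reversal of strand orientations on the $\Sigma_2$ side when the endpoints are permuted (harmless here precisely because the $\Sigma_2$ side carries no whiskers), and verify that the mutant's class $\pi_*([L^\mu])$ is again representable in $\Sigma_1$ (it is, since its intersection number with any closed curve in the interior of $\Sigma_2$ equals $\pm$ that of $L$ with the image curve, which vanishes by hypothesis). Your observation that the Klein four-group fixes each of the three perfect matchings of the four endpoints is correct and is indeed the reason the three basic tangles --- and hence, after the skein-theoretic reduction, the whole polynomial --- are unchanged; this is exactly the role it plays in the paper's shorter argument.
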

\begin{proof}
  This is similar to \Cref{thm:connect-sum}.
  If $\pi_*([L])$ is in the image of $H_1(\Sigma_1)\to H_1(\Sigma)$ then we can choose a link on the $\Sigma_1$ side homologous to $L$ to compute the arrow polynomial, and so in the whisker expansion we can omit all whiskers from the $\Sigma_2$ side.
  We can compute a partial expansion using smoothings on the $\Sigma_2$ side, and, after removing closed state loops from the $\Sigma_2$ side, we
  put $\mathcal{A}(L)$ into the form of a $\Z[A^{\pm 1}]$-linear combination of the three arrow polynomials with the $\Sigma_2$ side replaced by each of the following three virtual tangles:
  \[
\begingroup%
  \makeatletter%
  \providecommand\color[2][]{%
    \errmessage{(Inkscape) Color is used for the text in Inkscape, but the package 'color.sty' is not loaded}%
    \renewcommand\color[2][]{}%
  }%
  \providecommand\transparent[1]{%
    \errmessage{(Inkscape) Transparency is used (non-zero) for the text in Inkscape, but the package 'transparent.sty' is not loaded}%
    \renewcommand\transparent[1]{}%
  }%
  \providecommand\rotatebox[2]{#2}%
  \newcommand*\fsize{\dimexpr\f@size pt\relax}%
  \newcommand*\lineheight[1]{\fontsize{\fsize}{#1\fsize}\selectfont}%
  \ifx\svgwidth\undefined%
    \setlength{\unitlength}{118.04779124bp}%
    \ifx\svgscale\undefined%
      \relax%
    \else%
      \setlength{\unitlength}{\unitlength * \real{\svgscale}}%
    \fi%
  \else%
    \setlength{\unitlength}{\svgwidth}%
  \fi%
  \global\let\svgwidth\undefined%
  \global\let\svgscale\undefined%
  \makeatother%
  \begin{picture}(1,0.23138091)%
    \lineheight{1}%
    \setlength\tabcolsep{0pt}%
    \put(0,0){\includegraphics[width=\unitlength,page=1]{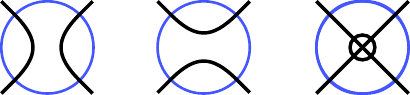}}%
  \end{picture}%
\endgroup%

  \]
  These are all invariant under mutation, so therefore $\mathcal{A}(L)$ is as well.
\end{proof}

\begin{corollary}
  If $C$ is a Conway circle for a virtual link diagram of a link $L$ such that one side of $C$ contains no virtual crossings, then the arrow polynomial for $L$ is invariant under mutations with respect to $C$.
\end{corollary}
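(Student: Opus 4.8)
The plan is to deduce the corollary from the ``in particular'' clause of the preceding proposition by showing that, when one side of the Conway circle $C$ contains no virtual crossings, the virtual link $L$ has a representative surface link $L\subset\Sigma\times I$ together with a Conway circle $C\subset\Sigma$ that bounds a disk. Since the arrow polynomial is a virtual link invariant, we are free to choose whatever diagram and ambient surface are convenient, and a mutation with respect to $C$ performed in such a presentation produces the same virtual link as the original planar mutation: a reflection of $C$ permuting the four marked points always extends over a disk $\Sigma_2$ with $\partial\Sigma_2=C$, so the surface-level mutation (cut along $C\times I$ and reglue by a rotation of the ball $\Sigma_2\times I$) and the planar mutation (rotate the crossing-free mutation disk) agree.

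First I would fix the planar diagram $D$ for $L$ with $C$ bounding an open disk $U$ in the plane that contains no virtual crossings, so that $D\cap U$ is an honestly embedded classical tangle with four endpoints on $C$ and possibly some closed loops. I would then build the surface $\Sigma$ into which $D$ cellularly embeds (gluing disks to the boundary of the geometric realization of the ribbon graph, as in the introduction) by cutting $D$ along $C$ into a virtual tangle $T_1$ carrying \emph{all} of the virtual crossings and the classical tangle $T_2=D\cap U$. The surface built from $T_1$ is a compact oriented surface $\Sigma_1$ with a single boundary circle, while $T_2$, being classical, can be drawn in a disk $\Sigma_2$; gluing $\Sigma_2$ to $\Sigma_1$ along their boundary circles reassembles a closed surface $\Sigma$ and a surface link $L\subset\Sigma\times I$, which by a detour-move argument supported near $C$ is virtually equivalent to the original $L$.

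With this presentation $C=\partial\Sigma_2$ bounds a disk in $\Sigma$, so the ``in particular'' clause of the preceding proposition applies directly and shows that the arrow polynomial of $L$ is invariant under mutations with respect to $C$. (Equivalently, one may verify the proposition's hypothesis directly: capping off $\Sigma_1$ with the disk $\Sigma_2$ induces an isomorphism $H_1(\Sigma_1)\to H_1(\Sigma)$, since the boundary circle of $\Sigma_1$ is already nullhomologous; hence this map is surjective and $\pi_*([L])$ automatically lies in its image.)

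The main obstacle is the middle step: making precise that the surface realizing the diagram can be taken so that a virtual-crossing-free Conway circle bounds a disk. This is the intuitively evident but mildly technical fact that the virtual genus of a diagram is ``localized'' at its virtual crossings and that a classical sub-tangle contributes only a planar piece to the ambient surface; one must be careful that the cellular-embedding surface of $D$ is not forced to route handles through $U$. I would handle this by working with the cut tangles $T_1$ and $T_2$ as above, checking that $\Sigma_1$ indeed has a single boundary circle and that the reassembled $(\Sigma\times I,L)$ is virtually equivalent to the original via detour moves, after which the conclusion is immediate.
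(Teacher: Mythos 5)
Your proposal is correct and is essentially the paper's (implicit) argument: the corollary is left without proof because, once the virtual-crossing-free side of $C$ is realized as a disk in some representative surface $\Sigma$, it follows immediately from the ``in particular'' clause of the preceding proposition (whose hypothesis holds since $H_1$ of a disk vanishes), together with the observation that the planar mutation of the classical tangle corresponds to a Conway mutation of the surface link. For your technical middle step, you can sidestep the cellular-embedding construction for $T_1$ (whose band surface need not have all four endpoints on one boundary circle) by simply attaching a handle at each virtual crossing of the planar immersion --- all such crossings lie outside $U$, so $C$ still bounds the disk $U$ in the resulting representative surface, and minimality of the surface is never needed since the arrow polynomial is invariant under stabilization.
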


\section{Computational investigations}
\label{sec:computational-investigations}

We have created computer programs to compute the arrow polynomial.
\Cref{sec:computing-arrow,sec:computing-harrow} have reference implementations written in Mathematica for the arrow polynomial and the homological arrow polynomial.
These use a ``virtual Temperley--Lieb planar algebra'' that has been augmented with whiskers, and the implementation uses the fact that elements of this algebraic structure can be reduced to a normal form using simple rules.

There is a more-efficient JavaScript implementation of the arrow polynomial in KnotFolio\cite{KnotFolio}, which is an online tool at \url{https://kmill.github.io/knotfolio/} for computing invariants of knots, and it includes a feature to identify virtual knots according to Green's census\cite{Green2004}.
The tool can compute the \emph{$n$-cabled arrow polynomial}, which is the arrow polynomial of the $n$-cabling of the $0$-framing of a given oriented virtual link (and so the $1$-cabled arrow polynomial is the normalized arrow polynomial itself).
By computing these polynomials for small values of $n$ and consulting a pre-computed table, KnotFolio is able to uniquely identify almost all virtual knots with up to five crossings.

We also have a fast implementation of cabled arrow polynomials at \url{https://github.com/kmill/arrow_poly} written in Lean~4, a relatively new functional programming language that compiles to C and has a sophisticated dependent type system, which allows one to, entirely within Lean~4, state theorems about programs and prove them\cite{Moura2021}.
We have so far only used these proof capabilities to prove basic correctness properties of the program, but we plan to prove the program computes virtual knot invariants (and, once Lean~4 supports WASM, this implementation will replace the one in KnotFolio).
In the meantime, our confidence in the implementations stems from the fact that the JavaScript and Lean~4 implementations, which were each implemented from scratch, calculated the same $1$- and $2$-cabled arrow polynomials for all the virtual knots in Green's census with up to five crossings.

While the JavaScript implementation was designed with speed in mind, the Lean~4 implementation is significantly faster.
The JavaScript computation of all $1$- and $2$-cabled arrow polynomials for the $2565$ virtual knots in Green's census with up to five crossings took $9.5$ days of computer time on an Intel Xeon E5-2665, but the Lean~4 computation of $1$-, $2$-, and $3$-cabled arrow polynomials for the same knots took $2.8$ days of computer time on the same CPU.
Note that the computation of $n$-cabled arrow polynomials for virtual knots with $c$ crossings is similar in complexity to computing virtual Jones polynomials of virtual knots with $cn^2$ crossings, and so the Lean~4 program is effectively handling $45$-crossing virtual knots, with the caveat that it is pre-computing a cache of expansions of common tangles to accelerate the computation.
We tested the program on $4$-cabled arrow polynomials of two $5$-crossing virtual knots, and each polynomial took about ten hours to compute.

We should mention that the significance of $n$-cabled arrow polynomials is that they can be used to compute what might be called the \emph{colored arrow polynomials}, where the $n$th colored arrow polynomial is from taking an $n$-cabling of the virtual link with spliced-in $n$th Jones--Wenzl projectors.
This yields an invariant in $\Q(A)[K_1,K_2,\dots]$.
Just like for colored Jones polynomials, the first $n$ cabled arrow polynomials determine the first $n$ colored arrow polynomials and vice versa.
Also, note that given the first $n$ cabled arrow polynomials of a virtual link one can compute the first $n$ cabled arrow polynomials for the virtual link with any framing whatsoever --- this is justification for taking the $0$-framing in particular.

Now that we have described the programs, we now describe the degree to which cabled arrow polynomials distinguish virtual knots.
We are using the Green census of unoriented virtual knots modulo vertical and horizontal mirror images (\texttt{knots-6.txt}).

For the $117$ virtual knots with up to four crossings, the arrow polynomial by itself distinguishes all but the $62$ in \Cref{tab:arrow-non-unique-4-1}, but the $1$- and $2$-cabled arrow polynomials together fully determine the virtual knot.
In comparison, the $1$- and $2$-cabled Jones polynomials are unable to distinguish the virtual knots listed in \Cref{tab:jones-non-unique-4-2}.

For the $2565$ virtual knots with up to five crossings, the $1$- and $2$-cabled arrow polynomials together distinguish all but the $22$ listed in \Cref{tab:arrow-non-unique-5-2}.
The $1$-, $2$-, and $3$-cabled arrow polynomials distinguish all the five-crossing virtual knots except for the $18$ in \Cref{tab:arrow-non-unique-5-3}.
In comparison, the $1$-, $2$-, and $3$-cabled Jones polynomials fail to distinguish the $74$ in \Cref{tab:jones-non-unique-5-3}.
The Alexander polynomial is able to distinguish the four marked pairs in \Cref{tab:arrow-non-unique-5-3}, hence there are only five undistinguished pairs using only these invariants.

We also computed the $4$-cabled arrow polynomials of 5.196 and 5.1662 and found that it is able to distinguish this pair.
Since these polynomials each take ten hours to compute we have not pursued calculating the $4$-cabled arrow polynomials of the remaining pairs at this time.

It is tempting to take advantage of the fact that an $n$-cabling of a virtual knot is an $n$-component virtual link, so we might use homological arrow polynomials.
However, note that the components in the $n$-cabling are homologous to one another and so the $n$-cabled arrow polynomial determines the homological arrow polynomial of the $n$-cabling.

\begin{table}[tb]
  \centering
  \caption{Virtual knots up to four crossings with non-unique arrow polynomials.  Each cell consists of virtual knots sharing arrow polynomials.}
  \label{tab:arrow-non-unique-4-1}
  \def\arraystretch{1.2}
  \footnotesize
  \begin{tabular}{|l|l|l|l|}
    \hline
    0.1, 4.46, 4.72, 4.98, 4.107 &
    2.1, 4.33, 4.44 &
    3.2, 4.27 &
    3.6, 4.105 \\
    \hline
    3.7, 4.85, 4.96, 4.106 &
    4.1, 4.7 &
    4.2, 4.8, 4.51, 4.71 &
    4.4, 4.5, 4.18, 4.30 \\
    \hline
    4.9, 4.61 &
    4.11, 4.63 &
    4.13, 4.55, 4.56 &
    4.15, 4.29 \\
    \hline
    4.16, 4.68 &
    4.19, 4.67 &
    4.20, 4.34 &
    4.25, 4.43 \\
    \hline
    4.26, 4.47, 4.97 &
    4.28, 4.45, 4.83 &
    4.38, 4.49 &
    4.40, 4.52 \\
    \hline
    4.50, 4.70 &
    4.58, 4.75 &
    4.59, 4.76, 4.77 &
    4.99, 4.108 \\
    \hline
  \end{tabular}
\end{table}

\begin{table}[tb]
  \centering
  \caption{Virtual knots up to five crossings with non-unique $1$- and $2$-cabled arrow polynomials.  Each cell consists of virtual knots with the same such polynomials.}
  \label{tab:arrow-non-unique-5-2}
  \def\arraystretch{1.2}
  \footnotesize
  \begin{tabular}{|l|l|l|l|}
    \hline
    5.196, 5.1662 &
    5.197, 5.1657 &
    5.204, 5.1670 &
    5.205, 5.1665 \\
    \hline
    5.287, 5.1168 &
    5.294, 5.1175 &
    5.295, 5.1176 &
    5.302, 5.1183 \\
    \hline
    5.757, 5.760 &
    5.1113, 5.1124 &
    5.2322, 5.2411 \\
    \cline{1-3}
  \end{tabular}
\end{table}

\begin{table}[tb]
  \centering
  \caption{Virtual knots up to five crossings with non-unique $1$-, $2$-, and $3$-cabled arrow polynomials.  Each cell consists of virtual knots with the same such polynomials. Pairs indicated by \dag{} are distinguishable by their Alexander polynomials.}
  \label{tab:arrow-non-unique-5-3}
  \def\arraystretch{1.2}
  \footnotesize
  \begin{tabular}{|l|l|l|}
    \hline
    5.196, 5.1662 &
    5.197, 5.1657 &
    5.204, 5.1670 \\
    \hline
    5.205, 5.1665 &
    5.287, 5.1168 \dag{} & 
    5.294, 5.1175 \dag{} \\ 
    \hline
    5.295, 5.1176 \dag{} & 
    5.302, 5.1183 \dag{} & 
    5.2322, 5.2411 \\
    \hline
  \end{tabular}
\end{table}

\begin{table}[tb]
  \centering
  \caption{Virtual knots up to four crossings with non-unique $1$- and $2$-cabled Jones polynomials.  Each cell consists of virtual knots with the same such polynomials.}
  \label{tab:jones-non-unique-4-2}
  \def\arraystretch{1.2}
  \footnotesize
  \begin{tabular}{|l|l|l|l|}
    \hline
    0.1, 4.55, 4.56, 4.76, 4.77 &
    2.1, 4.4, 4.5, 4.54, 4.74 &
    3.3, 4.63 &
    4.1, 4.3, 4.7, 4.53, 4.73 \\
    \hline
    4.2, 4.6, 4.8, 4.12, 4.75 &
    4.13, 4.59, 4.107 &
    4.19, 4.42 &
    4.26, 4.97 \\
    \hline
    4.28, 4.83 &
    4.95, 4.101 \\
    \cline{1-2}
  \end{tabular}
\end{table}

\begin{table}[tb]
  \centering
  \caption{Virtual knots up to five crossings with non-unique $1$-, $2$-, and $3$-cabled Jones polynomials.  Each cell consists of virtual knots with the same such polynomials.}
  \label{tab:jones-non-unique-5-3}
  \def\arraystretch{1.2}
  \footnotesize
  \begin{tabular}{|l|l|l|l|l|}
    \hline
    5.15, 5.116 &
    5.16, 5.117 &
    5.23, 5.71 &
    5.24, 5.73 &
    5.25, 5.72 \\
    \hline
    5.26, 5.74 &
    5.58, 5.94 &
    5.59, 5.95 &
    5.60, 5.96 &
    5.61, 5.97 \\
    \hline
    5.196, 5.1662 &
    5.197, 5.1657 &
    5.204, 5.1670 &
    5.205, 5.1665 &
    5.287, 5.1168 \\
    \hline
    5.294, 5.1175 &
    5.295, 5.1176 &
    5.302, 5.1183 &
    5.661, 5.662 &
    5.754, 5.763 \\
    \hline
    5.757, 5.760 &
    5.807, 5.1672 &
    5.808, 5.1674 &
    5.809, 5.1673 &
    5.810, 5.1675 \\
    \hline
    \multicolumn{2}{|l|}{5.811, 5.814, 5.1676, 5.1679} &
    \multicolumn{2}{|l|}{5.812, 5.813, 5.1677, 5.1678} &
    5.1113, 5.1124 \\
    \hline
    5.1116, 5.1121 &
    5.1184, 5.1187 &
    5.1186, 5.1189 &
    \multicolumn{2}{|l|}{5.1190, 5.1192, 5.1193, 5.1195} \\
    \hline
    5.1191, 5.1194 &
    5.2322, 5.2411 \\
    \cline{1-2}
  \end{tabular}
\end{table}

\section*{Acknowledgements}
I would like to thank Louis Kauffman for suggesting that I implement the arrow polynomial for KnotFolio\cite{KnotFolio} and for sharing Mathematica code with me, Allison Henrich for some discussion about virtual knots and for encouraging me to write this paper, and Ian Agol for his usual helpful advice.
I would also like to thank Hans Boden for valuable feedback on an early draft of this paper.
{Furthermore, I thank the anonymous reviewer for their careful reading and for their suggested improvements.}

\appendix
\section{Computing the arrow polynomial}
\label{sec:computing-arrow}

This section contains a Mathematica program for computing the non-writhe-normalized arrow polynomial of virtual links.
It serves as a reference implementation and is not optimized for speed of execution.

The input to the algorithm are framed virtual knots given in oriented PD notation
Recall that PD notation represents the combinatorial data of a knot diagram (a type of ribbon graph), with a node per crossing (a vertex of the ribbon graph).
The edges of the ribbon graph, called \emph{arcs}, are given arbitrary unique integer labels, and, for oriented virtual knots, each crossing is read off using the convention in \Cref{fig:pd-codes}.
The \texttt{P} node is useful for either representing the unknot (e.g. \texttt{PD[P[1, 1]]}) or for whenever it might be convenient to be able to represent subdivided arcs.
We are following the convention of the \texttt{KnotTheory\`} Mathematica package for oriented PD notation, where oriented crossing nodes are represented as \texttt{Xp} and \texttt{Xm} expressions.\footnote{See \url{http://katlas.org/wiki/Planar_Diagrams}.}
Note that PD stands for ``planar diagram,'' but the notation works for nonplanar diagrams (i.e., virtual knots) perfectly well.
An example PD code for the virtual knot $2.1$ is given in \Cref{fig:pd-2-1}.
We choose not to represent virtual crossings in PD notation, but one is free to extend the notation to include them.

\begin{figure}[tb]
  \centering
  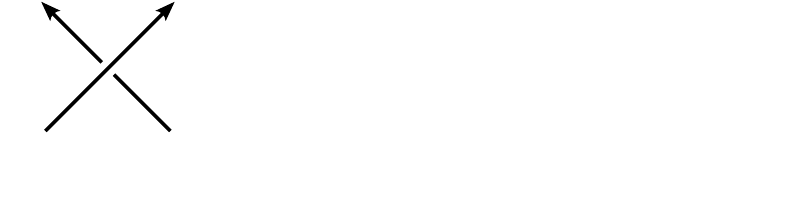
  \caption{Node types for oriented PD codes, following the convention for the \texttt{KnotTheory\`} Mathematica package. For unoriented PD codes, both crossing types are represented as \texttt{X[$a$,$b$,$c$,$d$]}.}
  \label{fig:pd-codes}
\end{figure}

\begin{figure}[tb]
  \centering
\begingroup%
  \makeatletter%
  \providecommand\color[2][]{%
    \errmessage{(Inkscape) Color is used for the text in Inkscape, but the package 'color.sty' is not loaded}%
    \renewcommand\color[2][]{}%
  }%
  \providecommand\transparent[1]{%
    \errmessage{(Inkscape) Transparency is used (non-zero) for the text in Inkscape, but the package 'transparent.sty' is not loaded}%
    \renewcommand\transparent[1]{}%
  }%
  \providecommand\rotatebox[2]{#2}%
  \newcommand*\fsize{\dimexpr\f@size pt\relax}%
  \newcommand*\lineheight[1]{\fontsize{\fsize}{#1\fsize}\selectfont}%
  \ifx\svgwidth\undefined%
    \setlength{\unitlength}{75.76282644bp}%
    \ifx\svgscale\undefined%
      \relax%
    \else%
      \setlength{\unitlength}{\unitlength * \real{\svgscale}}%
    \fi%
  \else%
    \setlength{\unitlength}{\svgwidth}%
  \fi%
  \global\let\svgwidth\undefined%
  \global\let\svgscale\undefined%
  \makeatother%
  \begin{picture}(1,0.97188897)%
    \lineheight{1}%
    \setlength\tabcolsep{0pt}%
    \put(0,0){\includegraphics[width=\unitlength,page=1]{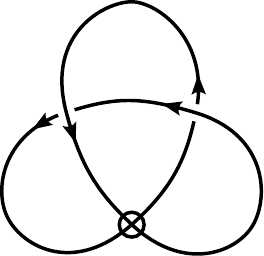}}%
    \put(0.726968,0.3025923){\makebox(0,0)[lt]{\lineheight{1.25}\smash{\begin{tabular}[t]{l}1\end{tabular}}}}%
    \put(0.88271161,0.50874669){\makebox(0,0)[lt]{\lineheight{1.25}\smash{\begin{tabular}[t]{l}2\end{tabular}}}}%
    \put(0.79297129,0.66765767){\makebox(0,0)[lt]{\lineheight{1.25}\smash{\begin{tabular}[t]{l}3\end{tabular}}}}%
    \put(0.55667192,0.61977713){\makebox(0,0)[lt]{\lineheight{1.25}\smash{\begin{tabular}[t]{l}4\end{tabular}}}}%
    \put(0.35584549,0.62145074){\makebox(0,0)[lt]{\lineheight{1.25}\smash{\begin{tabular}[t]{l}4\end{tabular}}}}%
    \put(0.137383,0.66929616){\makebox(0,0)[lt]{\lineheight{1.25}\smash{\begin{tabular}[t]{l}3\end{tabular}}}}%
    \put(0.02342724,0.50775446){\makebox(0,0)[lt]{\lineheight{1.25}\smash{\begin{tabular}[t]{l}1\end{tabular}}}}%
    \put(0.20805485,0.30685353){\makebox(0,0)[lt]{\lineheight{1.25}\smash{\begin{tabular}[t]{l}2\end{tabular}}}}%
  \end{picture}%
\endgroup%

  \caption{The ``virtual trefoil'' $2.1$ with arc labeled 1--4, with labels displayed for arcs around each crossing. This has PD code \texttt{PD[Xm[1,2,3,4],Xm[4,3,1,2]]}. We do not represent virtual crossings in PD codes.}
  \label{fig:pd-2-1}
\end{figure}

In the following listing, the rules in \texttt{arrowRules} give the expansion of oriented crossings using whiskers as described in the proof of \Cref{thm:h-evenness}.
Since we wish to normalize the arrow polynomial such that the arrow polynomial of the zero-framed unknot is $1$, the \texttt{arrow} function cuts open the virtual link to form a $1$-$1$ tangle since this saves having to divide by $-A^2-A^{-2}$ in the end.

\begin{lstlisting}
ClearAll[whiskers, K, arrow];

(* K[n] corresponds to the variable K_n *)
K[0] = 1;

(* whiskers[n, a, b] represents a path from arc id a to arc id b with
   n whiskers to the left if n is positive, and -n whiskers to the right
   if n is negative *)

whiskers /: whiskers[n_, a_, b_] whiskers[m_, b_, c_] := whiskers[n + m, a, c];
whiskers /: whiskers[n_, a_, b_] whiskers[m_, c_, b_] := whiskers[n - m, a, c];
whiskers /: whiskers[n_, a_, b_] whiskers[m_, a_, c_] := whiskers[n - m, c, b];
whiskers /: x_whiskers^2 := x x; (* activate above rules *)
whiskers[n_, a_, a_] := (-A^2 - A^-2) K[Abs[n]/2];
whiskers[n_, a_, b_] /; b < a := whiskers[-n, b, a]; (* a normalization *)

arrowRules = {
  Xp[a_, b_, c_, d_] :> 
    A whiskers[0, a, b] whiskers[0, c, d]
    + A^-1 whiskers[-1, a, d] whiskers[1, c, b],
  Xm[a_, b_, c_, d_] :> 
    A^-1 whiskers[0, b, c] whiskers[0, d, a]
    + A whiskers[-1, b, a] whiskers[1, d, c],
  P[a_, b_] :> whiskers[0, a, b]
};

arrow[pd_PD] := With[{max = Max[List @@@ (List @@ pd)]},
  With[{newpd = ReplacePart[pd, FirstPosition[pd, max] -> max + 1]},
    (* Now newpd is a 1-1 tangle *)
    With[{exp = Expand[Times @@ (newpd /. arrowRules)] /.
                  whiskers[n_, max, max + 1] :> K[Abs[n]/2]
         },
      exp // Collect[#, A, Simplify] &]]];
\end{lstlisting}

\noindent For example,

\begin{lstlisting}
(* 2.1 - Virtual trefoil *)
In[1]:= green2n1 = PD[Xm[1, 2, 3, 4], Xm[4, 3, 1, 2]];
In[2]:= arrow[green2n1]
Out[2]= 1/A^2 + K[1] - A^4 K[1]

(* 3.7 - Virtualized trefoil *)
In[3]:= green3n7 = PD[Xm[2, 5, 1, 4], Xp[4, 6, 3, 1], Xp[6, 2, 5, 3]];
In[4]:= arrow[green3n7]
Out[4]= -A^3 K[1]^2 + (-1 + K[1]^2)/A^5

(* Virtual Hopf link *)
In[5]:= vhopf = PD[Xm[1, 2, 1, 2]];
In[6]:= arrow[vhopf]
Out[6]= 1/A + A K[1]

(* 4.105 *)
In[7]:= green4n105 = PD[Xp[8, 4, 7, 5], Xp[4, 8, 3, 1],
                        Xp[2, 6, 1, 7], Xp[6, 2, 5, 3]];
In[7]:= arrow[green4n105]
Out[7]= 1 - 1/A^4 + A^8
\end{lstlisting}

\section{Computing the homological arrow polynomial}
\label{sec:computing-harrow}

This section gives a reference implementation in Mathematica for the homological arrow polynomial $\mathcal{A}(L)$ of a framed virtual link $L$ given in oriented PD notation (see \Cref{sec:computing-arrow} for a review).

This implementation supports components labeled from the set $\{1,\dots,10\}$ by encoding the component label of an arc by its label modulo $10$, so for example arc $5$ component $1$ would be labeled by $51$ in the PD code.
It is up to the user to verify that all arc ids in a given component are the same modulo $10$.
Different components may be given the same component label.

Note that, in contrast to \texttt{arrow} in \Cref{sec:computing-arrow}, the \texttt{harrow} function does not normalize the unknot to be $1$, and instead \texttt{harrow[PD[P[11, 11]]]} is $-A^2-A^{-2}$.
For simplicity of implementation, the variables $X_{\pm(i_1,\dots,i_n)}$ and $X_{\pm(i_1,\dots,i_n,0)}$ are equal.

\begin{lstlisting}
ClearAll[X, hwhiskers, V, mkV, harrow];

(* X[i1,i2,...] represents X_{\pm(i1,i2,...)} *)
X[cs___, 0] := X[cs];
X[] = 1;
(* Normalize tuple such that first nonzero index is positive. *)
X[zeros : (0 ...), c_, cs___] /; c < 0 := X[zeros, -c, Sequence @@ (-{cs})];

(* Vectors in Z^infinity *)
V[cs___, 0] := V[cs];
V /: V[cs1___] + V[cs2___] := With[{l1 = {cs1}, l2 = {cs2}},
   With[{l1x = Join[l1, Table[0, Length[l2] - Length[l1]]],
         l2x = Join[l2, Table[0, Length[l1] - Length[l2]]]},
    V @@ (l1x + l2x)]];
V /: n_Integer V[cs___] := V @@ (n {cs});

(* Creates a unit vector using the arc id modulo 10 *)
mkV[id_] := With[{label = Mod[id, 10, 1]}, V @@ UnitVector[label, label]];

(* hwhiskers[vec, a, b] is an arc from a to b with leftward whiskers described by vec *)
hwhiskers /: hwhiskers[n_, a_, b_] hwhiskers[m_, b_, c_] := hwhiskers[n + m, a, c];
hwhiskers /: hwhiskers[n_, a_, b_] hwhiskers[m_, c_, b_] := hwhiskers[n - m, a, c];
hwhiskers /: hwhiskers[n_, a_, b_] hwhiskers[m_, a_, c_] := hwhiskers[n - m, c, b];
hwhiskers /: x_hwhiskers^2 := x x;
hwhiskers[n_, a_, a_] := (-A^2 - A^-2) X @@ n;
hwhiskers[n_, a_, b_] /; b < a := hwhiskers[-n, b, a]; (* a normalization *)

hArrowRules = {
  Xp[a_, b_, c_, d_] :> 
   A hwhiskers[V[], a, b] hwhiskers[mkV[c] - mkV[d], d, c]
   + A^-1 hwhiskers[mkV[a], d, a] hwhiskers[mkV[b], c, b],
  Xm[a_, b_, c_, d_] :> 
   A^-1 hwhiskers[V[], b, c] hwhiskers[mkV[d] - mkV[a], a, d]
   + A hwhiskers[mkV[b], a, b] hwhiskers[mkV[c], d, c],
  P[a_, b_] :> hwhiskers[V[], a, b]
};

harrow[pd_PD] :=
  Expand[Times @@ (pd /. hArrowRules)] // Collect[#, A, Collect[#, _X, Simplify] &] &;
\end{lstlisting}

\noindent{}Examples:

\begin{lstlisting}[escapechar=@]
(* 2.1 - Virtual trefoil *)
In[1]:= harrow[PD[Xm[11, 21, 31, 41], Xm[41, 31, 11, 21]]]
Out[1]= -1 - 1/A^4 + (-(1/A^2) + A^6) X[2]

(* Virtual Hopf link *)
In[2]:= harrow[PD[Xm[11, 22, 11, 22]]]
Out[2]= (-(1/A^3) - A) X[1, -1] + (-(1/A) - A^3) X[1, 1]

(* Virtual Hopf link, both components with same label *)
In[3]:= harrow[PD[Xm[11, 21, 11, 21]]]
Out[3]= -(1/A^3) - A + (-(1/A) - A^3) X[2]

(* The link from @\Cref{fig:vlink1}@ *)
In[4]:= harrow[PD[Xm[82, 31, 72, 21], Xm[72, 41, 62, 31],
                  Xm[21, 52, 11, 82], Xm[11, 62, 41, 52]]]
Out[4]= 1 + 2/A^4 + A^8 + (1/A^12 - 2/A^4 + A^4) X[1, -1]^2
\end{lstlisting}

\let\MRhref\undefined
\bibliographystyle{hamsalpha}
\bibliography{refs.bib}

\newcommand{\etalchar}[1]{$^{#1}$}
\providecommand{\bysame}{\leavevmode\hbox to3em{\hrulefill}\thinspace}
\providecommand{\MR}{\relax\ifhmode\unskip\space\fi MR }
\providecommand{\MRhref}[2]{\href{http://www.ams.org/mathscinet-getitem?mr=#1}{#2}}
\providecommand{\href}[2]{#2}
\providecommand{\doi}[1]{\href{http://dx.doi.org/\detokenize{#1}}{DOI:\detokenize{#1}}}
\begin{thebibliography}{BGH{\etalchar{+}}17}

\bibitem[Ale28]{Alexander1928}
J.~W. Alexander, \emph{Topological invariants of knots and links}, Transactions
  of the American Mathematical Society \textbf{30} (1928), no.~2, 275--306.
  {\footnotesize \doi{10.2307/1989123}, \MR{1501429}}

\bibitem[BCG19]{Boden2017a}
Hans~U. Boden, Micah Chrisman, and Robin Gaudreau, \emph{Virtual knot cobordism
  and bounding the slice genus}, Exp. Math. \textbf{28} (2019), no.~4,
  475--491. {\footnotesize \doi{10.1080/10586458.2017.1422160},
  \href{http://arxiv.org/abs/1708.05982}{arXiv:1708.05982 [math.GT]},
  \MR{4032961}}

\bibitem[BGH{\etalchar{+}}17]{Boden2017}
Hans~U. Boden, Robin Gaudreau, Eric Harper, Andrew~J. Nicas, and Lindsay White,
  \emph{Virtual knot groups and almost classical knots}, Fund. Math.
  \textbf{238} (2017), no.~2, 101--142. {\footnotesize
  \doi{10.4064/fm80-9-2016},
  \href{http://arxiv.org/abs/1506.01726}{arXiv:1506.01726}, \MR{3640614}}

\bibitem[BK22]{Boden2019}
Hans~U. Boden and Homayun Karimi, \emph{The {J}ones-{K}rushkal polynomial and
  minimal diagrams of surface links}, Ann. Inst. Fourier (Grenoble) \textbf{72}
  (2022), no.~4, 1437--1475. {\footnotesize \doi{10.5802/aif.3516},
  \href{http://arxiv.org/abs/1908.06453v1}{arXiv:1908.06453v1 [math.GT]},
  \MR{4485830}}

\bibitem[BKS20]{Boden2020b}
Hans~U. Boden, Homayun Karimi, and Adam~S. Sikora, \emph{Adequate links in
  thickened surfaces and the generalized {T}ait conjectures}. {\footnotesize
  \href{http://arxiv.org/abs/2008.09895}{arXiv:2008.09895}}

\bibitem[BR21]{Boden2020}
Hans~U. Boden and William Rushworth, \emph{Minimal crossing number implies
  minimal supporting genus}, Bull. Lond. Math. Soc. \textbf{53} (2021), no.~4,
  1174--1184. {\footnotesize \doi{10.1112/blms.12491},
  \href{http://arxiv.org/abs/2012.09000}{arXiv:2012.09000}, \MR{4311827}}

\bibitem[CGX20]{Cheng2020}
Zhiyun Cheng, Hongzhu Gao, and Mengjian Xu, \emph{Some remarks on the chord
  index}, J. Knot Theory Ramifications \textbf{29} (2020), no.~10, 2042003, 25.
  {\footnotesize \doi{10.1142/S0218216520420031},
  \href{http://arxiv.org/abs/1811.09061}{arXiv:1811.09061 [math.GT]},
  \MR{4192496}}

\bibitem[CKS02]{Carter2002a}
J.~Scott Carter, Seiichi Kamada, and Masahico Saito, \emph{Stable equivalence
  of knots on surfaces and virtual knot cobordisms}, J. Knot Theory
  Ramifications \textbf{11} (2002), no.~3, 311--322. {\footnotesize
  \href{http://arxiv.org/abs/math/0008118v1}{arXiv:math/0008118v1 [math.GT]},
  \MR{1905687}}

\bibitem[CMW09]{Clark2009}
David Clark, Scott Morrison, and Kevin Walker, \emph{Fixing the functoriality
  of {K}hovanov homology}, Geom. Topol. \textbf{13} (2009), no.~3, 1499--1582.
  {\footnotesize \doi{10.2140/gt.2009.13.1499},
  \href{http://arxiv.org/abs/math/0701339v2}{arXiv:math/0701339v2 [math.GT]},
  \MR{2496052}}

\bibitem[DJK21]{Deng2020}
Qingying Deng, Xian'an Jin, and Louis~H. Kauffman, \emph{On arrow polynomials
  of checkerboard colorable virtual links}, J. Knot Theory Ramifications
  \textbf{30} (2021), no.~7, Paper No. 2150053, 17. {\footnotesize
  \doi{10.1142/S021821652150053X},
  \href{http://arxiv.org/abs/2002.07361v1}{arXiv:2002.07361v1 [math.GT]},
  \MR{4321934}}

\bibitem[DK05]{Dye2004}
H.~A. Dye and Louis~H. Kauffman, \emph{Minimal surface representations of
  virtual knots and links}, Algebr. Geom. Topol. \textbf{5} (2005), 509--535.
  {\footnotesize \doi{10.2140/agt.2005.5.509},
  \href{http://arxiv.org/abs/math/0401035v5}{arXiv:math/0401035v5 [math.GT]},
  \MR{2153118}}

\bibitem[DK09]{Dye2008}
\bysame, \emph{Virtual crossing number and the arrow polynomial}, J. Knot
  Theory Ramifications \textbf{18} (2009), no.~10, 1335--1357. {\footnotesize
  \doi{10.1142/S0218216509007166},
  \href{http://arxiv.org/abs/0810.3858v3}{arXiv:0810.3858v3 [math.GT]},
  \MR{2583800}}

\bibitem[Dye16]{Dye2016}
Heather~A. Dye, \emph{An invitation to knot theory}, CRC Press, Boca Raton, FL,
  2016, Virtual and classical. {\footnotesize \MR{3468517}}

\bibitem[FK13]{Folwaczny2012}
Lena~C. Folwaczny and Louis~H. Kauffman, \emph{A linking number definition of
  the affine index polynomial and applications}, J. Knot Theory Ramifications
  \textbf{22} (2013), no.~12, 1341004, 30. {\footnotesize
  \doi{10.1142/S0218216513410046},
  \href{http://arxiv.org/abs/1211.1747v1}{arXiv:1211.1747v1 [math.GT]},
  \MR{3149310}}

\bibitem[Gre04]{Green2004}
Jeremy Green, \emph{A table of virtual knots}, 2004. {\footnotesize
  \url{https://www.math.toronto.edu/drorbn/Students/GreenJ/}}

\bibitem[Ima16]{Imabeppu2016}
Takanori Imabeppu, \emph{On {S}awollek polynomials of checkerboard colorable
  virtual links}, J. Knot Theory Ramifications \textbf{25} (2016), no.~2,
  1650010, 19. {\footnotesize \doi{10.1142/S0218216516500103}, \MR{3463855}}

\bibitem[Kam02]{Kamada2002}
Naoko Kamada, \emph{On the {J}ones polynomials of checkerboard colorable
  virtual links}, Osaka J. Math. \textbf{39} (2002), no.~2, 325--333 (English).
  {\footnotesize \MR{1914297}}

\bibitem[Kam04]{Kamada2004}
\bysame, \emph{Span of the {J}ones polynomial of an alternating virtual link},
  Algebr. Geom. Topol. \textbf{4} (2004), 1083--1101. {\footnotesize
  \doi{10.2140/agt.2004.4.1083},
  \href{http://arxiv.org/abs/math/0412074v1}{arXiv:math/0412074v1 [math.GT]},
  \MR{2100692}}

\bibitem[Kau99]{Kauffman1999}
Louis~H. Kauffman, \emph{Virtual knot theory}, European Journal of
  Combinatorics \textbf{20} (1999), no.~7, 663--690. {\footnotesize
  \doi{10.1006/eujc.1999.0314}, \MR{1721925}}

\bibitem[Kup03]{Kuperberg2003}
Greg Kuperberg, \emph{What is a virtual link?}, Algebraic \& Geometric Topology
  \textbf{3} (2003), 587--591. {\footnotesize \doi{10.2140/agt.2003.3.587},
  \MR{1997331}}

\bibitem[Man03]{Manturov2003}
Vassily~O. Manturov, \emph{Kauffman-like polynomial and curves in 2-surfaces},
  J. Knot Theory Ramifications \textbf{12} (2003), no.~8, 1145--1153.
  {\footnotesize \doi{10.1142/S0218216503002974}, \MR{2017986}}

\bibitem[Mil20]{KnotFolio}
Kyle Miller, \emph{{K}not{F}olio}, 2020. {\footnotesize
  \url{https://kmill.github.io/knotfolio/}}

\bibitem[Miy06]{Miyazawa2006}
Yasuyuki Miyazawa, \emph{Magnetic graphs and an invariant for virtual links},
  J. Knot Theory Ramifications \textbf{15} (2006), no.~10, 1319--1334.
  {\footnotesize \doi{10.1142/S0218216506005135}, \MR{2286126}}

\bibitem[Miy08]{Miyazawa2008}
\bysame, \emph{A multi-variable polynomial invariant for virtual knots and
  links}, J. Knot Theory Ramifications \textbf{17} (2008), no.~11, 1311--1326.
  {\footnotesize \doi{10.1142/S0218216508006658}, \MR{2469206}}

\bibitem[MU21]{Moura2021}
Leonardo~de Moura and Sebastian Ullrich, \emph{The {L}ean 4 theorem prover and
  programming language}, Automated Deduction -- CADE 28 (Cham) (Andr{\'e}
  Platzer and Geoff Sutcliffe, eds.), Springer International Publishing, 2021,
  pp.~625--635. {\footnotesize \doi{10.1007/978-3-030-79876-5_37}}

\bibitem[Prz99]{Przytycki1998}
J\'{o}zef~H. Przytycki, \emph{Fundamentals of {K}auffman bracket skein
  modules}, Kobe J. Math. \textbf{16} (1999), no.~1, 45--66. {\footnotesize
  \href{http://arxiv.org/abs/math/9809113v1}{arXiv:math/9809113v1 [math.GT]},
  \MR{1723531}}

\bibitem[Tur12]{Turaev2010}
Vladimir Turaev, \emph{Knotoids}, Osaka J. Math. \textbf{49} (2012), no.~1,
  195--223. {\footnotesize
  \href{http://arxiv.org/abs/1002.4133v5}{arXiv:1002.4133v5 [math.GT]},
  \MR{2903260}}

\end{thebibliography}

\end{document}